\numberwithin{equation}{section}
\newcommand{\ie}{{\em i.e.}\ }
\newcommand{\ol}[1]{\overline{#1}}
\newcommand{\wt}[1]{\widetilde{#1}}
\newtheorem{theorem}{Theorem}[section]
\newtheorem*{theorem*}{Theorem}
\newtheorem{lemma}[theorem]{Lemma}
\newtheorem{proposition}[theorem]{Proposition}
\newtheorem{corollary}[theorem]{Corollary}
\newtheorem*{conjecture*}{Conjecture}
\newtheorem{remark}[theorem]{Remark}
\newcommand{\opname}[1]{\operatorname{\mathsf{#1}}}
\renewcommand{\mod}{\opname{mod}\nolimits}
\newcommand{\Mod}{\opname{Mod}\nolimits}
\newcommand{\per}{\opname{per}\nolimits}
\newcommand{\op}{^{op}}
\newcommand{\der}{\cd}
\newcommand{\dimv}{\underline{\dim}\,}
\newcommand{\rank}{\opname{rank}\nolimits}
\newcommand{\ind}{\opname{ind}}
\newcommand{\tria}{\opname{tria}}
\newcommand{\im}{\opname{im}\nolimits}
\newcommand{\tot}{\opname{Tot}\nolimits}
\newcommand{\rad}{\opname{rad}\nolimits}
\newcommand{\Z}{\mathbb{Z}}
\newcommand{\Q}{\mathbb{Q}}
\newcommand{\C}{\mathbb{C}}
\newcommand{\id}{\mathbf{1}}
\newcommand{\Hom}{\opname{Hom}}
\newcommand{\go}{\opname{G_0}}
\newcommand{\RHom}{\opname{RHom}}
\newcommand{\Ext}{\opname{Ext}}
\newcommand{\Aut}{\opname{Aut}}
\newcommand{\End}{\opname{End}}
\newcommand{\ten}{\otimes}
\newcommand{\lten}{\overset{\boldmath{L}}{\ten}}
\newcommand{\Dif}{\opname{Dif}}
\newcommand{\lt}{\opname{L}T}
\newcommand{\rh}{\opname{R}H}
\newcommand{\ca}{{\mathcal A}}
\newcommand{\cb}{{\mathcal B}}
\newcommand{\cc}{{\mathcal C}}
\newcommand{\cd}{{\mathcal D}}
\newcommand{\ch}{{\mathcal H}}
\newcommand{\ck}{{\mathcal K}}
\newcommand{\cl}{{\mathcal L}}
\newcommand{\cm}{{\mathcal M}}
\newcommand{\cn}{{\mathcal N}}
\newcommand{\cp}{{\mathcal P}}
\newcommand{\cR}{{\mathcal R}}
\newcommand{\cs}{{\mathcal S}}
\newcommand{\ct}{{\mathcal T}}
\newcommand{\mh}{\mathfrak{h}}
\newcommand{\mg}{\mathfrak{g}}
\newcommand{\mn}{\mathfrak{n}}
\begin{document}
\author{Changjian Fu}
\title{On root categories of finite-dimensional algebras}

\date{September 2009. Last modified on \today}
\address{Department of Mathematics\\
SiChuan University\\
610064 Chengdu\\
P.R.China
}
\email{
\begin{minipage}[t]{5cm}
changjianfu@scu.edu.cn
\end{minipage}}
\subjclass[2010]{18E30, 16D90, 17B67}
\keywords{root category,
Ringel-Hall Lie algebra, GIM Lie algebra}

\begin{abstract}
For any finite-dimensional algebra $A$ over a field $k$ with finite
 global dimension, we investigate the root category $\cR_A$ as the triangulated hull  of the
 $2$-periodic orbit category of $A$ via the construction of B. Keller in "On triangulated orbit
 categories". This is motivated by Ringel-Hall Lie algebras
 associated to $2$-periodic triangulated categories.
 As an application, we study the Ringel-Hall Lie algebras  for a class of finite-dimensional
  $k$-algebras with global dimension $2$, which turn out to give an
  alternative answer for a question of GIM Lie algebras by Slodowy in "Beyond Kac-Moody algebra,
and inside".
\end{abstract}

 \maketitle

\section{Introduction}
 Root category was first introduced by D. Happel~\cite{Happel1987} for finite-dimensional
hereditary algebra, which was used to characterize a bijection
between the indecomposable objects of the root category for the path
algebra of Dynkin type and the root system of corresponding complex simple
Lie algebra.

Let $A$ be a finite-dimensional hereditary algebra over a field $k$.
Let $\der^b(\mod A)$ be the derived category of finitely generated
right $A$-modules. Then the root category $\cR_A$ of $A$ is defined
to be the $2$-periodic orbit category $\der^b(\mod A)/\Sigma^2$,
where $\Sigma$ is the suspension functor. It was proved  by
Peng-Xiao~\cite{Peng-Xiao1997}, the root category $\cR_A$ is
triangulated via the
 homotopy category of $2$-periodic complexes category of
 $A$-modules. With this triangle structure, Peng and
 Xiao~\cite{Peng-Xiao2000} constructed a so called  Ringel-Hall Lie algebra
 associated to each root category and realized all the symmetrizable
 Kac-Moody Lie algebras. In fact, Peng-Xiao's construction is valid for
 any $\Hom$-finite $2$-periodic triangulated category. In ~\cite{Lin-Peng2005},
 Lin-Peng realized the elliptic Lie algebras of type $D_4^{(1,1)},
 E_6^{(1,1)},E_7^{(1,1)},E_8^{(1,1)}$ via the $2$-periodic orbit categories
 (which are triangulated) of corresponding tubular
 algebras. However, in general, for arbitrary finite-dimensional $k$-algebra
$A$, the $2$-periodic orbit category $\der^b(\mod A)/\Sigma^2$ is
not triangulated with the inherited triangle structure from
$\der^b(\mod A)$({\it cf.} section ~\ref{e:minimal} or
~\cite{Keller2005} ).
 Up to now, there are no suitable $\Hom$-finite $2$-periodic
 triangulated categories to realize the other elliptic Lie algebras
 via the Ringel-Hall Lie algebras approach.

Let $A$ be a finite-dimensional $k$-algebra with finite global
dimension. In ~\cite{Xiao-Xu-Zhang2006}, the authors propose to
study the homotopy category $\ck_2(\cp)$ of
 $2$-periodic complexes category of finitely generated projective
 $A$-modules and give a geometric construction of a Lie algebra over $\mathbb{C}$ directly instead
 of over finite fields in \cite{Peng-Xiao2000}. In this paper, we
 propose to study another $2$-periodic triangulated category $\cR_A$
 called the root category of $A$ via Keller's
 construction~\cite{Keller2005}.  Then Peng-Xiao's construction~\cite{Peng-Xiao2000} gives a Lie
 algebra $\ch(\cR_A)$ for arbitrary finite-dimensional $k$-algebra $A$ with finite global dimension.
  We remark that the root category $\cR_A$ is invariant up to derived equivalence.
  Note that by the $2$-universal property of root category, we
 have an embedding $\cR_A\hookrightarrow \ck_2(\cp)$. When the algebra $A$ is hereditary,
 $\cR_A\cong \ck_2(\cp)\cong \der^b(\mod A)/\Sigma^2$ and  coincides with the original
 definition of Happel.
 We also remark that by using
 $\cR_A$, one can easily construct $2$-periodic triangulated
 categories such that the Grothendieck groups of these categories
 characterize the root lattices for any elliptic Lie algebras ({\it cf.} section~\ref{elliptic
 algebra}). This is one of the motivations to introduce the root category in this paper. The relation between the Ringel-Hall Lie algebras of these
  categories and the corresponding elliptic Lie algebras would be interesting to study in future.

  This paper is organized as follows: in section $2$, for
    any finite-dimensional $k$-algebra
   $A$ of finite global dimension, we introduce the root
   category $\cR_A$ and study its basic properties.
    It is $\Hom$-finite $2$-periodic triangulated category
      and  admits AR-triangles.
     We also give an explicitly characterization of its Grothendieck group.
      In section $3$, we study some motivating examples. In
      particular, we give a minimal example such that the
      $2$-periodic orbit category is not triangulated with the inherited triangle structure.
      In section $4$, we consider the root categories
      of representation-finite hereditary algebras, we show that
      such root categories characterize the algebras up to derived
      equivalence.
       In the last section, we study the Ringel-Hall Lie
      algebras
      of a class of finite-dimensional $k$-algebras with global dimension $2$,
      which turn out to give a negative answer for a question on GIM Lie algebra by
      Slodowy. Let us mention that different counterexamples have been
      discovered in \cite{Alpen1984} by using different approach.
       In the appendix, we discuss the universal property of root category and study
      recollement associated to root categories, which can be use to
      construct various examples inductively such that the $2$-periodic orbit
      category is not triangulated with the inherited triangle
      structure from the bounded derived category.

Throughout this paper, we fix  a field $k$ . All algebras are
finite-dimensional $k$-algebras with finite global dimension. All
modules are  right modules.  Let $\cc$ be a $k$-category. For any
$X,Y\in \cc$, we write $\cc(X,Y)$ for $\Hom_{\cc}(X,Y)$. For a
subcategory $\cm$ in a triangulated category $\ct$, we denote by
$\tria(\cm)$  the thick subcategory of $\ct$ contains $\cm$.

{\bf Acknowledgments.} I deeply thank my supervisor Liangang Peng for
 his guidance and generous patience. Many thanks go to Bernhard Keller for kindly answering to my various questions and for his encouragement.
 I would also like to thank Dong Yang for interesting and useful comments.

\section{root categories for finite-dimensional algebras}
\subsection{2-periodic orbit categories} Let $A$ be a finite-dimensional
$k$-algebra of finite global dimension. Let $\der^b(\mod A)$ be the
bounded derived category of finitely generated $A$-modules and
$\Sigma$ the suspension functor. Consider the left total derived
functor of $A\otimes_kA\op$-module $\Sigma^2 A$
\[\Sigma^2=?\lten_A\Sigma^2 A : \der^b(\mod A)\to \der^b(\mod A),
\]
which is an equivalence. For all $L,M$ in $\der^b(\mod A)$, the
group
\[\der^b(\mod A)(L,\Sigma^{2n}M)
\]
vanishes for all but finitely many $n\in \Z$. The {\it 2-periodic
orbit category}
\[\der^b(\mod A)/\Sigma^{2}
\]
of $A$ is defined as follows:
\begin{itemize}
\item[$\circ$] the objects are the same as those of $\der^b(\mod A)$;
\item[$\circ$] if $L$ and $M$ are in $\der^b(\mod A)$ the space of
morphisms is isomorphic to the space
\[\bigoplus_{n\in \Z}\der^b(\mod A)(L, \Sigma^{2n}M).
\]
\end{itemize}
The composition of morphisms is obviously. Suppose that $A$ is
hereditary.  Then the orbit category is called {\it root category}
of $A$ which  was first introduced by D. Happel in
\cite{Happel1987}.

A $\Hom$-finite $k$-additive triangulated category $\der$ is called
{\it  $2$-periodic triangulated} if:
\begin{itemize}
\item[$\circ$] $\Sigma^2\cong \id$, where $\Sigma$ is the suspension
functor of $\der$;
\item[$\circ$] the endomorphism ring $\End(X)$ for
any indecomposable object $X$ is a finite-dimensional local
$k$-algebra.
\end{itemize}
In particular, the $2$-periodic orbit category of a hereditary
algebra $A$ is $2$-periodic triangulated with canonical triangle
structure proved by Peng-Xiao\cite{Peng-Xiao1997}. However, this is
not true in general. The first example is due to A.Neeman who
considers the algebra $A$ of dual numbers $k[X]/(X^2)$. Then the
$2$-periodic orbit category of $A$ is not triangulated ({\it cf.}
section $3$ of ~\cite{Keller2005}). No example of  algebra with
finite global dimension seems to be known.
\subsection{Root category via Keller's construction}\label{keller's construction}
As shown by Keller in \cite{Keller2005}, if $\der^b(\mod A)$ is
triangulated equivalent to the bounded derived category of a
hereditary category, then the orbit category $\der^b(\mod
A)/\Sigma^2$ is triangulated. In general,  the orbit category is not
triangulated. But a {\it triangulated hull} was defined in
~\cite{Keller2005} as the algebraic triangulated category $\cR_A$
with the following universal properties:
\begin{itemize}
\item[$\circ$] There exists an algebraic triangulated functor $\pi:\der^b(\mod A)\to
\cR_A$;
\item[$\circ$] Let $\cb$ be a dg category and $X$ an object of $\der(A^{op}\otimes
\cb)$. If there exists an isomorphism in $\der(A^{op}\otimes \cb)$
between $\Sigma^{2}A\lten_AX$ and $X$, then the triangulated
algebraic functor $?\lten_AX : \der^b(\mod A)\to \der(\cb)$
factorizes through $\pi$.
\end{itemize}

 Consider $A$ as a dg
algebra concentrated in degree $0$. Let $\cs$ be the dg algebra with
underlying complex $A\oplus \Sigma A$, where the multiplication is
that of the trivial extension:
\[(a,b)(a', b')=(aa',ab'+ba').
\]
Let $\der(\cs)$ be the derived category of $\cs$ and $\der^b(\cs)$
the bounded derived category, {\it i.e.} the full subcategory of
$\der(\cs)$ formed by the dg modules whose homolgy has finite total
dimesion over $k$. Let $\per(\cs)$ be the perfect derived category
of $\cs$, {\it i.e.} the smallest subcategory of $\der(\cs)$
contains $\cs$ and stable under shift,extensions and passage to
direct factors. Clearly, the perfect derived category $\per(\cs)$ is
contained in $\der^b(\cs)$. Denote by $p: \cs\to A$ the canonical
projection. It induces a triangle functor $p_*: \der^b(\mod
A)\to \der^b(\cs)$. By composition we obtain a functor
\[\pi_A: \der^b(\mod A)\to \der^b(\cs)\to \der^b(\cs)/\per(\cs).
\]
Let $\tria(p_*A)$ be the thick subcategory of $\der^b(\cs)$
generated by the image of $p_*A$. By Theorem 2 of \cite{Keller2005},
the triangulated hull of the orbit category $\der^b(\mod
A)/\Sigma^{2}$ is the category
\[\cR_A=\tria(p_*(A))/\per(\cs).
\]
Moreover, there is an embedding $i: \der^b(\mod A)/\Sigma^{2}\hookrightarrow
<A>_{\cs}/\per(\cs)$. If $i$ is dense, then we say that the $2$-periodic orbit category $\der^b(\mod A)/\Sigma^2$ is triangulated with inherited triangle structure of $\der^b(\mod A)$.
 If $A$ is an hereditary algebra, the embedding $i$ is essentially an equivalence of triangulated categories
\[\der^b(\mod A)/\Sigma^{2}\cong \tria(p_*(A))/\per(\cs).
\]

   Since $\cs$ is a negative dg algebra.  It is
well-known that there is a canonical $t$-structure $(\der^{\leq},
\der^{\geq})$  induced by homology over $\der(\cs)$. In particular,
$\der^{\leq}$ is the full subcategory of $\der(\cs)$ whose objects
are the dg modules $X$ such that the homology groups $H^p(X)$
vanishes for all $p>0$. Obviously, the $t$-structure restricts to
the subcategory $\der^b(\cs)$ of $\der(\cs)$. It is not hard to see
that $\der^b(\cs)=\tria(p_*A)$. Then the root category
$\cR_A=\der^b(\cs)/\per S$ in this case. In the following, we call
the triangulated hull $\cR_A$  the {\it root category} of $A$ and
$\pi_A:\der^b(\mod A)\to \der^b(\cs)/\per(\cs)=\cR_A$ the canonical
functor.
\begin{remark}
 One can also consider the construction for the orbit category $\der^b(\mod
 A)/\Sigma^{-2}$ which in fact the same as $\der^b(\mod A)/\Sigma^2$.
 Then one replaces  the dg algebra $\cs$  by  $\cs'=A\oplus
 \Sigma^{-3}A$. The root category defined as $\cR_A=\tria(p_*A)/\per
 \cs'$.
\end{remark}

\subsection{Alternative description of $\cR_A$}\label{s:dg orbit category}
There is another description of $\cR_A$ in \cite{Keller2005}. Let $\ca$ be the dg category of bounded complexes of finitely generated projective $A$-modules. Naturally, the tensor product of $\Sigma^2 A$ define a dg functor from $\ca$ to $\ca$. Then one can form the {\it dg orbit category} $\cb$ as the dg category with the same objects of $\ca$ and such that  for any $X, Y\in \cb$, we have
\[\cb(X,Y)\cong  \bigoplus_{n \in \Z}\ca(X, \Sigma^{2n}Y).
\]
 Now we have an equivalence of categories
\[\der^b(\mod A)/\Sigma^2\cong \ch^0(\cb).
\]
 Let $\der(\cb)$ be the derived category of the dg category $\cb$.
 Let the ambient triangulated category $\cm$ be the triangulated subcategory of
 $\der(\cb)$ generated by the representable functors. Then theorem 2 of \cite{Keller2005} implies that $\cR_A\cong \cm$.

 \begin{proposition}\label{2-periodic}
 Let $A$ be a finite-dimensional $k$-algebra of finite global dimension.
 Then the root category $\cR_A$ is a $\Hom$-finite $2$-periodic triangulated category.
 \end{proposition}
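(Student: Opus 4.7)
The plan is to verify the three defining conditions of a $2$-periodic triangulated category: $\Hom$-finiteness, $\Sigma^{2}\cong\id$, and locality of the endomorphism ring at each indecomposable. The triangulated structure itself is already supplied by Keller's construction, so only these three conditions need checking.

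For the $2$-periodicity, the cleanest route is through the dg orbit category description of subsection~\ref{s:dg orbit category}. By construction, for every object $X\in\cb$ one has
$\cb(X,\Sigma^{2}X)=\bigoplus_{n\in\Z}\ca(X,\Sigma^{2n+2}X)$,
which contains $\id_{X}\in\ca(X,X)$ in the summand indexed by $n=-1$. This canonical element defines an isomorphism $X\iso\Sigma^{2}X$ in $\cb$, hence on the representable functors in $\der(\cb)$, and by naturality it extends to an isomorphism of triangulated functors $\id\iso\Sigma^{2}$ on the triangulated hull generated by the representables, which is $\cR_{A}$.

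For $\Hom$-finiteness I would first treat the representable objects, then propagate to all of $\cR_{A}$ by dévissage. Since $\pi_{A}:\der^{b}(\mod A)\to\cR_{A}$ is the canonical functor into a triangulated hull of the orbit category, for $X,Y\in\der^{b}(\mod A)$ one has
\[
\Hom_{\cR_{A}}(\pi_{A}X,\pi_{A}Y)\cong\bigoplus_{n\in\Z}\Hom_{\der^{b}(\mod A)}(X,\Sigma^{2n}Y)\ko
\]
and the assumption of finite global dimension, combined with the $\Hom$-finiteness of $\der^{b}(\mod A)$, makes this a finite direct sum of finite-dimensional spaces. To pass to arbitrary objects of $\cR_{A}$, I would consider the full subcategory $\cn\subset\cR_{A}$ of those $X$ such that $\Hom_{\cR_{A}}(X,\pi_{A}Y)$ and $\Hom_{\cR_{A}}(\pi_{A}Y,X)$ are finite-dimensional for every $Y\in\der^{b}(\mod A)$; the five-lemma applied to the long exact sequences associated with a triangle shows that $\cn$ is triangulated, and it contains the essential image of $\pi_{A}$, hence equals $\cR_{A}$ (since $\cR_{A}$ is generated as a triangulated category by that image). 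A second, symmetric dévissage — now varying the first argument over all of $\cR_{A}$ while the second runs over the subcategory just obtained — upgrades this to $\Hom$-finiteness of $\cR_{A}$. This devissage is the main technical step, though it is standard.

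Finally, to get local endomorphism rings on indecomposables, note that $\cR_{A}$ is the idempotent-closed triangulated subcategory $\tria(p_{\ast}A)/\per(\cs)$ of a Verdier quotient of an idempotent-complete triangulated category, so idempotents split in $\cR_{A}$. Combined with the $\Hom$-finiteness established in the previous paragraph, this gives a Krull–Schmidt structure on $\cR_{A}$; in particular, for any indecomposable $X$ the ring $\End_{\cR_{A}}(X)$ is a finite-dimensional $k$-algebra in which all non-units form an ideal, i.e.\ it is local. Putting the three steps together yields the proposition.
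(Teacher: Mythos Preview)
Your arguments for $\Hom$-finiteness and for $\Sigma^{2}\cong\id$ are essentially the paper's, only with the d\'evissage spelled out more explicitly; the paper likewise appeals to the dg orbit category description and to the representable functors as generators.

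The gap is in your Krull--Schmidt step. You write that $\cR_{A}=\tria(p_{*}A)/\per(\cs)$ is ``the idempotent-closed triangulated subcategory \ldots\ of a Verdier quotient of an idempotent-complete triangulated category, so idempotents split in $\cR_{A}$.'' This does not follow: Verdier quotients of idempotent-complete triangulated categories are \emph{not} in general idempotent complete, and being a thick subcategory of such a quotient does not repair this. (Indeed, in the paper's setting $\tria(p_{*}A)=\der^{b}(\cs)$, so $\cR_{A}$ \emph{is} the Verdier quotient, not a proper subcategory of it.) So your argument, as written, does not establish that idempotents split in $\cR_{A}$.

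The paper closes this gap by switching to the alternative description of section~\ref{s:dg orbit category}: $\cR_{A}\cong\cm\subset\der(\cb)$. Here $\der(\cb)$ admits arbitrary direct sums and is therefore idempotent complete, and $\cm$ is closed under direct summands in $\der(\cb)$; hence $\cm$ is idempotent complete by the elementary fact that a summand-closed full subcategory of an idempotent-complete category is itself idempotent complete. Together with the $\Hom$-finiteness already established, this yields the Krull--Schmidt property and hence local endomorphism rings on indecomposables. The moral is that the dg-orbit realisation inside the large derived category $\der(\cb)$ is the right place to check idempotent completeness, precisely because it bypasses the Verdier quotient description.
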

 \begin{proof}
 The $\Hom$-finiteness follows from the description of $\cm$,
 since the homomorphisms between representable functors of $\cb$ are finite-dimensional over $k$.
  Consider the $\cb\otimes \cb\op$-module $X:$ $X(A, B)=\cb(A, B)$ for any $A, B\in \cb$, it induces the identity functor
 \[\id: \der(\cb)\to \der(\cb).
 \]
 One can also consider the $\cb\otimes \cb\op$-module $Y(A, B)=\Sigma^2\cb(A,B)$ for any $A, B\in \cb$.
  Clearly, the module $Y$ induces the triangle functor
 \[\Sigma^2: \der(\cb)\to \der(\cb).
 \]
  By the definition of the dg orbit category $\cb$, we know that $X$ is isomorphic to $Y$ as
  $\cb\otimes \cb\op$-modules, which will induce an invertible morphism
  $\eta: \id\to \Sigma^2$ by Lemma 6.1 of \cite{Keller1994}.
  Thus, to show that $\cR_A$ is $2$-periodic triangulated category,
  it suffices to show that $\cR_A$ is Krull-Schmidt category.
  It suffices to prove that each idempotent morphism of $\cR_A$ is split, {\it i.e.} $\cR_A$ is idempotent completed.
  Recall that $\cR_A=\cm\subset \der(\cb)$ and $\der(\cb)$ is
  idempotent complete since $\der(\cb)$ admits arbitrary direct
  sums. Moreover, $\cm$ is closed under direct summands in
  $\der(\cb)$. Now the result follows from the well-known fact that
    if an additive category $\cc$ is idempotent completed,
    then a full subcategory $\der$ of $\cc$ is idempotent completed if
    and only if $\der$ is closed under direct summands.
 \end{proof}

\subsection{Serre functor over $\cR_A$}
Keep the notations above. Let $D=\Hom_k(?, k)$ be the usual duality
over $k$. The
 $\cs\otimes_k\cs\op$-module $D\cs$ induces a triangle functor
 \[?\lten_{\cs}D\cs:\der(\cs)\to \der(\cs).
 \]
  We have the following well-known fact(see {\it e.g.} lemma 1.2.1 of
\cite{Amiotthesis}).
\begin{lemma}\label{l:non-degenerate}
There is a non-degenerate bilinear form
\[\alpha_{X,Y}:\der(\cs)(X, Y)\times \der(\cs)(Y, X\lten_{\cs}D\cs)\to k
\]
which is bifunctorial for $X\in \per(\cs)$ and $Y\in \der^b(\cs)$.
\end{lemma}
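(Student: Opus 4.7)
The plan is to establish the pairing by the standard Nakayama/Serre-type argument: first verify the claim for $X=\cs$, then for shifts, then extend to all of $\per(\cs)$ by dévissage along triangles, using the fact that $\per(\cs)$ is generated by $\cs$ under shifts, extensions and direct summands.

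First, I would observe that for $X=\cs$, the left-hand space reduces to $H^0(Y)$, while the right-hand space reduces to $\der(\cs)(Y, D\cs)$. Using the natural adjunction
\[
\Hom_{\cs}(Y, D\cs) = \Hom_{\cs}(Y,\Hom_k(\cs,k)) \cong \Hom_k(Y\lten_{\cs}\cs,k) \cong DY ,
\]
and passing to $H^0$, the second space becomes $D H^0(Y)$. The pairing $\alpha_{\cs,Y}$ is then the tautological evaluation pairing
\[
H^0(Y)\times DH^0(Y)\to k,
\]
which is non-degenerate precisely because $Y\in\der^b(\cs)$ has finite-dimensional total homology, and is manifestly bifunctorial in $Y$.

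Next I would handle shifts: for $X=\Sigma^n \cs$, applying $\Sigma^{-n}$ on both sides and using naturality reduces the claim to the previous step, after noting that $\Sigma^n\cs\lten_{\cs}D\cs \cong \Sigma^n D\cs$. Then to extend to arbitrary $X\in\per(\cs)$, I would proceed by induction on the construction of $X$ from $\cs$. Given a triangle $X'\to X\to X''\to \Sigma X'$ in $\per(\cs)$ with both $X',X''$ already satisfying the conclusion, apply $\der(\cs)(-,Y)$ and $\der(\cs)(Y,-\lten_{\cs}D\cs)$ (the latter after tensoring the triangle with $D\cs$, which preserves triangles) to obtain two long exact sequences connected by the bilinear forms $\alpha_{?,Y}$. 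Naturality of $\alpha$ in the first variable yields a ladder; the five-lemma then promotes non-degeneracy from $X',X''$ to $X$. A direct summand argument closes $\per(\cs)$ under the property.

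Bifunctoriality is essentially automatic from the construction: in $Y$ it is immediate from the definition of the evaluation pairing, and in $X$ it follows from how the pairing transforms under morphisms $X_1\to X_2$ via the functor $?\lten_{\cs}D\cs$. The mildly subtle step is the dévissage, where one must check that the connecting morphisms in the two long exact sequences intertwine correctly with $\alpha$; this is the only place that benefits from writing the pairing more conceptually as the composition
\[
\der(\cs)(X,Y)\otimes \der(\cs)(Y, X\lten_{\cs}D\cs)\to \der(\cs)(X, X\lten_{\cs}D\cs) \to H^0(D\cs)\to k,
\]
where the last map is the trace induced by the counit $\cs\lten_{\cs}D\cs\to D\cs$ composed with $H^0(D\cs)=Dk^{\text{unit part}}\to k$. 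Formulated this way, naturality in $X$ and $Y$ is transparent and the five-lemma step becomes mechanical. I expect this conceptual reformulation of $\alpha$ to be the main (but still routine) obstacle; everything else is a direct application of adjunction and the standard dévissage in $\per(\cs)$.
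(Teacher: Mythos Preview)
Your proposal is correct and follows the standard d\'evissage argument for such Nakayama-type pairings; the paper itself does not supply a proof, instead citing this as a well-known fact (Lemma~1.2.1 of \cite{Amiotthesis}). The base case via tensor--Hom adjunction, the reformulation through a trace map to make bifunctoriality transparent, and the five-lemma extension along triangles and direct summands in $\per(\cs)$ constitute exactly the expected argument.
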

\begin{proposition}\label{p:auto-equivalence}
The functor $?\lten_{\cs}D\cs$ restricts to auto-equivalences
\[?\lten_{\cs}D\cs: \der^b(\cs)\to \der^b(\cs), ?\lten_{\cs}
D\cs:\per(\cs)\to \per(\cs).
\]
\end{proposition}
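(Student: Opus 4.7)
The plan is to establish the proposition in three stages: preservation of $\per(\cs)$, preservation of $\der^b(\cs)$, and invertibility of the two restrictions. The main substantive step is to compute $D\cs$ as a one-sided $\cs$-module and verify its two-sided perfectness.

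First I would observe that, as dg algebras,
\[
\cs \;\cong\; A \otimes_k \Lambda,
\]
where $\Lambda = k[\epsilon]/(\epsilon^2)$ with $\epsilon$ in degree $-1$ and zero differential; the tensor-product multiplication on the right agrees with the trivial-extension multiplication on $\cs$ because $A$ sits in degree zero, so the Koszul signs are trivial. A direct computation with $\Lambda$ gives $D\Lambda \cong \Sigma^{-1}\Lambda$ as right $\Lambda$-modules, and extending by $A$ over $k$ yields the key isomorphism
\[
D\cs \;\cong\; \Sigma^{-1}(DA \otimes_A \cs)
\]
of right $\cs$-modules. Finite global dimension of $A$ gives $DA \in \per(A) = \tria(A_A)$; since $A \otimes_A \cs = \cs$ and $? \otimes_A \cs$ is triangulated, I conclude $DA \otimes_A \cs \in \per(\cs)$, hence $D\cs \in \per(\cs)$. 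The symmetric argument on the left side shows $D\cs \in \per(\cs\op)$.

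Preservation of $\per(\cs)$ is now formal: $\per(\cs) = \tria(\cs_\cs)$, and the triangulated functor $?\lten_\cs D\cs$ sends the generator $\cs$ to $D\cs \in \per(\cs)$. Preservation of $\der^b(\cs)$ uses $D\cs \in \per(\cs\op)$: for $X \in \der^b(\cs)$ the tensor product $X \lten_\cs D\cs$ has bounded amplitude because $D\cs$ has finite Tor-dimension as a left $\cs$-module, and its total cohomology remains finite-dimensional because that of both $X$ and $D\cs$ is.

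Invertibility then follows from Lemma~\ref{l:non-degenerate}: once both subcategories are preserved, the bifunctorial non-degenerate pairing exhibits $?\lten_\cs D\cs$ as a Serre functor on each of the $\Hom$-finite triangulated categories $\per(\cs)$ and $\der^b(\cs)$, and a Serre functor on such a category is always an auto-equivalence. The main obstacle is the identification $D\cs \cong \Sigma^{-1}(DA \otimes_A \cs)$; once this is in hand, finite global dimension of $A$ propagates cleanly to the required two-sided perfectness of $D\cs$ and everything else is formal.
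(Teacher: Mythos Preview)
Your explicit computation of $D\cs$ via the decomposition $\cs \cong A\otimes_k\Lambda$ (with $\Lambda=k[\epsilon]/(\epsilon^2)$, $\deg\epsilon=-1$) is correct and is a genuinely different route from the paper's. The paper never identifies $D\cs$ explicitly; instead it shows that $D\cs$ is a compact generator of $\der(\cs)$ with $\der(\cs)(\cs,\Sigma^n\cs)\cong\der(\cs)(D\cs,\Sigma^nD\cs)$, invokes Lemma~4.2 of \cite{Keller1994} to conclude that $?\lten_\cs D\cs$ is an equivalence on all of $\der(\cs)$, and only then checks that the generators $\cs$ (of $\per(\cs)$) and $p_*A$ (of $\der^b(\cs)$) are sent to generators of the same subcategories. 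Your approach of establishing two-sided perfectness of $D\cs$ first and then restricting is cleaner in spirit, and the preservation arguments you give are fine.

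The invertibility step, however, has a real gap for $\der^b(\cs)$. Lemma~\ref{l:non-degenerate} furnishes the non-degenerate pairing only for $X\in\per(\cs)$ and $Y\in\der^b(\cs)$; it says nothing when $X\in\der^b(\cs)\setminus\per(\cs)$. Hence you cannot conclude from that lemma that $?\lten_\cs D\cs$ is a Serre functor on $\der^b(\cs)$, and the auto-equivalence assertion on $\der^b(\cs)$ is unsupported as written. (Even on $\per(\cs)$ the argument is slightly incomplete: the pairing gives you a \emph{right} Serre functor, hence full faithfulness, but essential surjectivity still needs the observation $\cs\in\tria(D\cs)$, which does follow from your isomorphism $D\cs\cong\Sigma^{-1}(DA\otimes_A\cs)$ together with $A\in\tria(DA)$.) A fix that stays within your framework: your computation in fact yields $D\cs\cong DA\otimes_k\Sigma^{-1}\Lambda$ as $\cs$-\emph{bimodules}; since $DA$ is derived-invertible over $A$ (finite global dimension) and $\Sigma^{-1}\Lambda$ is trivially invertible over $\Lambda$, the bimodule $D\cs$ is invertible, so $?\lten_\cs D\cs$ is an auto-equivalence of $\der(\cs)$. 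Two-sided perfectness of both $D\cs$ and its inverse then shows that both $\per(\cs)$ and $\der^b(\cs)$ are preserved in both directions, giving the desired restrictions. Alternatively, you can argue as the paper does: compute $A\lten_\cs D\cs\cong\Sigma^{-1}DA$ and use $A\in\tria(DA)$ to see that the image of the generator $p_*A$ again generates $\der^b(\cs)$.
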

\begin{proof}
Since $A$ has finite global dimension, we know that $DA\in \per A$.
One can easily deduce that $D\cs\in \per \cs$. Similarly, we have
$\cs\in \tria(D\cs)\subseteq \der(\cs)$.  This particular implies
that $D\cs$ is a small generator of $\der(\cs)$. It is not hard to
show that
\[\der(\cs)(\cs, \Sigma^n\cs)\cong \der(\cs)(D\cs,
\Sigma^nD\cs), n\in \Z.
\]
Thus by Lemma 4.2 of \cite{Keller1994}, we infer that
$?\lten_{\cs}D\cs$ is an equivalence over $\der(\cs)$. Now the
functor $?\lten_{\cs}D\cs$ restricts to $\per(\cs)$ follows from
$\tria(D\cs)=\per(\cs)$. Similarly, recall that we have
$\tria(p_*A)=\der^b(\cs)$ and $A\lten_{\cs}D\cs\cong
\Sigma^{-1}DA\in \der^b(\cs)$. Now again by the finite global
dimension of $A$, we have $A\in \tria(p_*(DA))\subseteq
\der^b(\cs)$. In particular, we have $\tria(p_*(DA))=\der^b(\cs)$.
Thus, $?\lten_{\cs}D\cs$ restricts to an equivalence
$?\lten_{\cs}D\cs:\der^b(\cs)\to \der^b(\cs)$.
\end{proof}

Before going to state the next result, we recall Amiot's
construction~\cite{Amiot2008} of bilinear form for quotient
category. Let $\ct$ be a triangulated category and $\cn\subset \ct$
a thick subcategory of $\ct$. Assume $\nu$ is an auto-equivalence of
$\ct$ such that $\nu(\cn)\subset \cn$. Moreover we assume that there
is a non degenerate  bilinear form:
\[\beta_{N,X}:\ct(N,X)\times \ct(X,\nu N)\to k
\]
which is bifunctorial in $N\in \cn$ and $X\in \ct$.  Let $X,Y\in
\ct$. A morphism $p: N\to X$ is called a {\it local $\cn$-cover of
$X$ relative to $Y$} if $N$ is in $\cn$ and it induces an exact
sequence:
\[0\to \ct(X,Y)\xrightarrow{p^*} \ct(N,Y).
\]
The following theorem is due to Amiot~\cite{Amiot2008}.
\begin{theorem}\label{t:Amiot}
\begin{itemize}
\item[1)] The bilinear form $\beta$ naturally induces a bilinear
form:
\[\beta'_{X,Y}:\ct/\cn(X,Y)\times \ct/\cn
(Y,\nu \Sigma^{-1}X)\to k\] which is also bifunctorial for $X,Y\in
\ct/\cn$;
\item[2)] Assume further $\ct$ is $\Hom$-finite. If there exists a
local $\cn$-cover of $X$ relative to $Y$ and a local $\cn$-cover of
$\nu Y$ relative to $X$, then the bilinear form $\beta'_{X,Y}$ is
non-degenerate.
\end{itemize}
\end{theorem}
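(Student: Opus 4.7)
The approach is to construct $\beta'$ by representing morphisms in the Verdier quotient $\ct/\cn$ as roofs in $\ct$, pair the lifted data via $\beta$, and check independence of the choice of roof; non-degeneracy then comes from the local cover hypotheses. Given $\bar f \in \ct/\cn(X,Y)$, choose a roof $X \xleftarrow{s} X' \xrightarrow{f'} Y$ with $\operatorname{cone}(s) = N \in \cn$. The rotated triangle
\[
\Sigma^{-1}N \xrightarrow{u} X' \xrightarrow{s} X \to N
\]
produces the morphism $u^*f' \in \ct(\Sigma^{-1}N, Y)$, and applying $\nu$ yields a triangle $\nu\Sigma^{-1}X' \to \nu\Sigma^{-1}X \to \nu\Sigma^{-1}N \to \nu X'$ whose third term lies in $\cn$. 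Representing $\bar g \in \ct/\cn(Y, \nu\Sigma^{-1}X)$ compatibly and extracting its component $g'' \in \ct(Y, \nu\Sigma^{-1}N)$, one sets
\[
\beta'(\bar f, \bar g) := \beta_{\Sigma^{-1}N,Y}(u^*f',\, g'') \in k,
\]
which is a legitimate scalar because $\Sigma^{-1}N \in \cn$ makes $\beta_{\Sigma^{-1}N,Y}$ applicable.

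The main verification for part (1) is that this scalar does not depend on the roof representatives. Any two roofs for $\bar f$ (respectively $\bar g$) admit a common refinement, so it suffices to compare values along a single refinement. This is a diagram chase combining the bifunctoriality of $\beta$ with the octahedral axiom applied to the composition of two cone-in-$\cn$ morphisms; the $\Sigma^{-1}$ shift in the target of the pairing is precisely what makes the refinements on the two sides compatible. Bilinearity and bifunctoriality of $\beta'$ in both variables then descend directly from those of $\beta$, completing part (1).

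For part (2), let $\bar f \in \ct/\cn(X,Y)$ be nonzero and represent it by a roof as above. A combination of the local cover $p: N \to X$ of $X$ relative to $Y$ and a further refinement of the roof produces a morphism $\bar p: \ol{N'} \to X'$ with $\ol{N'} \in \cn$ such that $f' \circ \bar p \neq 0$ in $\ct(\ol{N'}, Y)$; here the injectivity clause in the definition of local cover together with the non-vanishing of $\bar f$ is what forces non-triviality. The non-degeneracy of $\beta_{\ol{N'}, Y}$ then supplies $h \in \ct(Y, \nu \ol{N'})$ with $\beta_{\ol{N'}, Y}(f' \circ \bar p,\, h) \neq 0$, and the second local cover $q: M \to \nu Y$ of $\nu Y$ relative to $X$ plays the dual role of lifting $h$ to a representative of a class $\bar g \in \ct/\cn(Y, \nu\Sigma^{-1}X)$ whose pairing with $\bar f$ reproduces this nonzero scalar.

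The main obstacle I expect is the well-definedness in part (1): two different roof representatives are related through a chain of refinements, and tracking how $\beta$ transforms under each refinement is delicate because both factors of the pairing are affected simultaneously. The point is that the $\Sigma^{-1}$ twist inside $\nu\Sigma^{-1}$ exactly compensates the shift introduced by the triangles, yielding a canonical scalar. Once this compatibility is established, the non-degeneracy in part (2) reduces to a clean diagram chase translating the two local cover conditions into the injectivity and surjectivity needed to transport the pairing between $\ct$ and $\ct/\cn$.
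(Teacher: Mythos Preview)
The paper does not prove this theorem: it is stated as a result of Amiot and attributed to \cite{Amiot2008} immediately before the statement, with no argument given afterward. There is therefore no in-paper proof to compare your proposal against.

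Your outline is broadly in line with Amiot's original approach: represent morphisms in the Verdier quotient by roofs, use the cone object (which lies in $\cn$) to feed the first slot of $\beta$, and verify that the resulting scalar is independent of the chosen representatives. One step in your sketch is underspecified, however: the phrase ``representing $\bar g$ compatibly and extracting its component $g'' \in \ct(Y,\nu\Sigma^{-1}N)$'' does not obviously yield a genuine morphism in $\ct$, since $\bar g$ lives only in the quotient and has no canonical lift to $\ct$ with target $\nu\Sigma^{-1}N$. This is not a fatal gap---one can, for instance, take a single roof for the composite $\bar g\circ\bar f \in \ct/\cn(X,\nu\Sigma^{-1}X)$ and read both inputs of $\beta$ off the one cone, or refine both roofs to a common $X'$---but as written the construction of $\beta'$ is incomplete at that point. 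Your plan for part~(2) correctly identifies how the two local $\cn$-cover hypotheses transport non-degeneracy from $\beta$ to $\beta'$.
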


Recall that $\cR_A=\der^b(\cs)/\per(\cs)$.  Now we have the
following
\begin{proposition}\label{non-degenerate}
\begin{itemize}
\item[1)] The bilinear form $\alpha$ induces a bifunctorial bilinear
form $\alpha'$:
\[\alpha'_{X,Y}:\cR_A(X, Y)\times
\cR_A(Y,\Sigma^{-1}X\lten_{\cs}D\cs)\to k;
\]
\item[2)] The bilinear form $\alpha'$ is non-degenerate over
$\cR_A$.
\end{itemize}
\end{proposition}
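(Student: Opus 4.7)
The overall strategy is to apply Theorem~\ref{t:Amiot} to $\ct=\der^b(\cs)$, $\cn=\per(\cs)$, and $\nu=\;?\lten_\cs D\cs$. All the structural hypotheses are in place: $\cn$ is a thick subcategory of $\ct$ since $\per(\cs)\subseteq\der^b(\cs)$ as noted above, Proposition~\ref{p:auto-equivalence} shows that $\nu$ restricts to auto-equivalences of both $\ct$ and $\cn$, and Lemma~\ref{l:non-degenerate}, restricted from $\der(\cs)$ to $\der^b(\cs)$, supplies precisely the bifunctorial non-degenerate pairing $\beta_{N,X}:\ct(N,X)\times\ct(X,\nu N)\to k$ required for $N\in\cn$ and $X\in\ct$. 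Part~1) then follows immediately from Theorem~\ref{t:Amiot}~1) applied to this setup, noting that $\cR_A=\ct/\cn$ and $\nu\Sigma^{-1}X=\Sigma^{-1}X\lten_\cs D\cs$.

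For Part~2), I need to verify the remaining hypotheses of Theorem~\ref{t:Amiot}~2). First, $\ct=\der^b(\cs)$ is $\Hom$-finite: this is standard, since $\cs$ is a proper dg algebra concentrated in degrees $0$ and $-1$ with finite-dimensional total cohomology. Second, I must exhibit the two local $\cn$-covers. Given $X,Y\in\ct$, choose $m\in\Z$ with $Y\in\der^{\geq m}$, which is possible because $Y$ has bounded cohomology. If I can produce $N\in\per(\cs)$ together with a morphism $q:N\to X$ whose cone $C$ lies in $\der^{\leq m-1}$, then $\ct(C,Y)=0$ by the $t$-structure, and the long exact sequence forces $q^{*}:\ct(X,Y)\hookrightarrow\ct(N,Y)$, so $q$ is a local $\cn$-cover of $X$ relative to $Y$. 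An analogous construction applied to the pair $(\nu Y, X)$ yields the second required cover.

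The heart of the argument is therefore the construction of these perfect approximations. The plan is to build $N$ by climbing down from the top cohomological degree of $X$: each $H^i(X)$ is a finite-dimensional $A$-module, and since $A$ has finite global dimension it admits a finite projective resolution which lifts to a perfect dg $\cs$-module via the canonical projection $\cs\to A$. Attaching finitely many such pieces by standard cellular constructions produces a perfect $N$ together with $q:N\to X$ inducing isomorphisms on $H^i$ for $i\geq m+1$ and a surjection on $H^m$, which by the long exact sequence forces $C\in\der^{\leq m-1}$ as required. This use of the finite global dimension of $A$ is the one technical step where the hypothesis on $A$ really intervenes; once it is in hand, both parts of the proposition follow by a direct invocation of Theorem~\ref{t:Amiot}.
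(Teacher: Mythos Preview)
Your setup for Part~1) and the strategy for Part~2) --- reduce to exhibiting local $\per(\cs)$-covers and invoke Theorem~\ref{t:Amiot}~2) --- are exactly what the paper does. The divergence, and the gap, is in how you build the covers. The paper first observes that non-degeneracy is extension-closed, so it suffices to treat $X=A$, $Y=\Sigma^n A$; it then writes down explicit truncations
\[
P_{A,\Sigma^n A}=\tot\bigl(\Sigma^n\cs\to\Sigma^{n-1}\cs\to\cdots\to\cs\bigr)\longrightarrow A
\]
of the infinite $\cs$-resolution of $A$, which are manifestly perfect and serve as the required local covers. Your proposal instead tries to handle arbitrary $X,Y$ by ``lifting'' a finite projective $A$-resolution of each $H^i(X)$ to a perfect $\cs$-module via $p:\cs\to A$. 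This step is incorrect as stated: restriction along $p$ does \emph{not} take perfect $A$-complexes to perfect $\cs$-complexes. Already $p_*A$ is not perfect over $\cs$; its projective resolution is the infinite total complex $\tot(\cdots\to\Sigma^2\cs\to\Sigma\cs\to\cs)$ computed in the paper. So the ``pieces'' you want to attach are not in $\per(\cs)$.

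The fix is close to what you sketch but uses $\cs$ directly rather than $A$: map free $\cs$-modules $\Sigma^{-n}\cs^r$ onto the top cohomology of $X$, take the cone (which lowers the top degree since $H^*(\cs)$ is concentrated in degrees $-1,0$), and iterate finitely many times until the cone lands in $\der^{\leq m-1}$. This is the standard approximation for connective dg algebras and needs no hypothesis on the global dimension of $A$ --- so your remark that finite global dimension ``really intervenes'' precisely here is also off; it is used earlier, in Proposition~\ref{p:auto-equivalence}, to get $D\cs\in\per(\cs)$ and the auto-equivalence of $\nu$, not in the cover construction itself. Either carry out this corrected inductive approximation, or adopt the paper's cleaner route of reducing to the generators first.
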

\begin{proof}
The first statement  follows form lemma~\ref{l:non-degenerate},
proposition~\ref{p:auto-equivalence} and theorem~\ref{t:Amiot}
directly.

Let $P_A=\tot(\cdots\to \Sigma^{n}\cs\to\Sigma^{n-1}\cs\to \cdots\to
\Sigma^2\cs\to \Sigma \cs\to \cs\to 0\to \cdots)$, {\it i.e.} $P_A$
is the projective resolution of $\cs$-module $A$. Then one can
easily see that $\der^b(\cs)(A, \Sigma^m A)$ is finite dimension
over $k$ for any $m\in \Z$. In particular, we have
\[\der^b(\cs)(A, \Sigma^{2m}A)\cong A \ \ \text{and }\
\der^b(\cs)(A, \Sigma^{2m+1}A)=0,
\]
for $m\geq 0$ and $\der^b(\cs)(A,\Sigma^mA)=0$ for $m<0$. Since
$p_*A$ generates the category $\der^b(\cs)$, which implies that
$\der^b(\cs)$ is $\Hom$-finite, {\it i.e.} for any $X, Y\in
\der^b(\cs)$, we have $\dim_k\der^b(\cs)(X, Y)<\infty$. Since
 the non-degeneracy is extension closed,
it suffices to show that $\alpha'_{\Sigma^nA, \Sigma^mA}$ is
non-degenerate. Equivalently, it suffices to show that $\alpha'_{A,
\Sigma^nA}$ is non-degenerate for any $n\in \Z$. By Theorem
~\ref{t:Amiot} $2)$, it suffices to show that there exists a local
$\per \cs$-cover of $A$ relative to $\Sigma^nA$ and a local $\per
\cs$-cover of $\Sigma^nA$ relative to $A\lten_{\cs}D\cs$. For $n<0$,
since $\der^b(\cs)(A,\Sigma^nA)=0$, one can take $p:\cs\to A$ be the
local $\per \cs$ of $A$ relative to $\Sigma^nA$.

Suppose that $n\geq 0$. Let
 \[P_{A,\Sigma^nA}:=\tot(\cdots\to 0\to \Sigma^n\cs\to \Sigma^{n-1}\cs\to \cdots\to \Sigma \cs\to \cs\to 0\to
 \cdots).
\]
Clearly $P_{A,\Sigma^nA}\in \per \cs$. One can easily to see that
$p: P_{A,\Sigma^nA}\to A$ is a local $\per \cs$-cover of $A$
relative to $\Sigma^nA$. Note that $A\lten_{\cs}D\cs\cong
\Sigma^{-1}DA$. A local $\per \cs$-cover of $\Sigma^n A$ relative to
$\Sigma^{-1}DA$ is equivalent to a local $\per \cs$-cover of $A$
relative to $\Sigma^{-n-1}DA$. If $n\geq 0$, one can easily show
that $\der^b(\cs)(A, \Sigma^{-n-1}DA)=0$. Suppose that $n< 0$. One
can show that
\[P_{A,\Sigma^{-n-1}DA}:=\tot(\cdots\to 0\to \Sigma^{-n-1}\cs\to \Sigma^{-n-2}\cs\to \cdots\to \Sigma \cs\to \cs\to 0\to
 \cdots)\to A
\]
is a local $\per \cs$-cover of $A$ relative to $\Sigma^{-n-1}DA$.
\end{proof}
\begin{theorem}\label{t:Serre functor}
The root category $\cR_A$ admits Auslander-Reiten triangles.
\end{theorem}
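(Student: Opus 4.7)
The plan is to reduce the theorem to the standard criterion of Reiten--Van den Bergh, which says that a $\Hom$-finite Krull--Schmidt $k$-linear triangulated category admits Auslander--Reiten triangles as soon as it has a Serre functor. All the pieces are already in place from the preceding results.

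First, I would verify the formal hypotheses. By Proposition~\ref{2-periodic}, $\cR_A$ is $\Hom$-finite and, as shown in the course of its proof, idempotent complete. Since each indecomposable object of a $\Hom$-finite idempotent complete $k$-additive category has a local endomorphism ring, $\cR_A$ is Krull--Schmidt. Thus what remains is to exhibit a Serre functor on $\cR_A$, that is, an auto-equivalence $\nu:\cR_A\to\cR_A$ together with a bifunctorial isomorphism
\[
D\cR_A(X,Y)\cong \cR_A(Y,\nu X)
\]
for all $X,Y\in\cR_A$.

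The candidate is $\nu:=\Sigma^{-1}(?\lten_{\cs}D\cs)$. That this is a well-defined auto-equivalence of $\cR_A$ follows from Proposition~\ref{p:auto-equivalence}: the functor $?\lten_{\cs}D\cs$ restricts to auto-equivalences of both $\der^b(\cs)$ and $\per(\cs)$, so it descends to an auto-equivalence of the Verdier quotient $\cR_A=\der^b(\cs)/\per(\cs)$; composing with the shift $\Sigma^{-1}$ gives $\nu$. The Serre duality isomorphism is then precisely the non-degenerate bifunctorial bilinear form
\[
\alpha'_{X,Y}:\cR_A(X,Y)\times \cR_A(Y,\Sigma^{-1}X\lten_{\cs}D\cs)\to k
\]
constructed in Proposition~\ref{non-degenerate}, since non-degeneracy on a $\Hom$-finite space is equivalent to an isomorphism $\cR_A(Y,\nu X)\cong D\cR_A(X,Y)$.

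Putting these together, $\cR_A$ is a $\Hom$-finite Krull--Schmidt $k$-linear triangulated category equipped with a Serre functor $\nu$, so by Reiten--Van den Bergh it admits Auslander--Reiten triangles; explicitly, for each indecomposable $X$ the AR-triangle ending at $X$ is of the form $\nu X\to E\to X\to \Sigma\nu X$. The only nontrivial step in the plan is the verification that $\nu$ is an auto-equivalence of $\cR_A$, but this has essentially been done already by Proposition~\ref{p:auto-equivalence} together with the universal property of the Verdier quotient; I do not anticipate a genuine obstacle.
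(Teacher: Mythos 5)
Your proposal is correct and follows exactly the paper's route: the paper likewise takes $\Sigma^{-1}(?\lten_{\cs}D\cs)$ as the Serre functor, citing Proposition~\ref{non-degenerate} for the non-degenerate bifunctorial form and then invoking Reiten--Van den Bergh. You merely spell out the intermediate verifications (Hom-finiteness, Krull--Schmidt, descent of the auto-equivalence to the Verdier quotient) that the paper leaves implicit.
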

\begin{proof}
By proposition \ref{non-degenerate}, we know that
$\Sigma^{-1}?\lten_{\cs}D\cs$ is the Serre functor of
$\cR_A=\der^b(\cs)/\per \cs$. Now the result follows form Reiten-Van
den Bergh's result in \cite{Reiten-Van den Bergh2002}.
\end{proof}

\subsection{The Grothendieck group of $\cR_A$}\label{s:grothendieck
group}
Suppose that the algebra $A$ has $n$ non-isomorphic simple modules,
say $S_1, \cdots. S_n$ and $P_1, \cdots, P_n$  the corresponding
projective covers. Since $\cs$ be the trivial extension of $A$ with
non-standard gradation, $S_1, \cdots, S_n$ are also non-isomorphic
simple modules for $\cs$. By the existence of $t$-structure over
$\der^b(\cs)$, it is not hard to see that  the Grothendieck group
$\go(\der^b(\cs))$ of $\der^b(\cs) $ is isomorphic to
$\Z[S_1]+\cdots+\Z[S_n]$. Indeed, consider the inclusion algebra
homomorphism $i: A\to \cs$, which induces a triangle functor $i_*:
\der^b(\cs)\to \der^b(\mod A)$. Compose with the functor
$p_*:\der^b(\mod A)\to \der^b(\cs)$, the result follows from that
$\go(\der^b(\mod A))\cong \Z^n$.

 We have the following exact
sequence of triangulated categories
\[\per \cs\rightarrowtail \der^b(\cs)\twoheadrightarrow \cR_A,
\]
which induces an exact sequence of Grothendieck groups
\[\go(\per \cs)\xrightarrow{i^*} \go(\der^b(\cs))\xrightarrow{\phi}\go(\cR_A)\to 0.
\]
In particular, we have $\go(\cR_A)\cong \go(\der^b(\cs))/\im i_*$.
Let $\wt{P_i}=P_i\oplus \Sigma P_i$. It is not hard to see that
$\wt{P_i}$ are all the indecomposable projective objects in
$\der(\cs)$ up to shifts. Note that $\cs$ is negative, as remark in
\cite{Keller1994}, each compact object is an extension of direct sum
of $\Sigma^n\wt{P_i}, n\in \Z$. In particular, in the Grothendieck
group $\go(\per \cs)$, for any $X\in \per \cs$, $[X]$ is a finite
sum of $[\wt{P_i}], i=1, \cdots, n$.  It is easy to see that
$i^*([\wt{P_i}])=0\in \go(\der^b(\cs))$. Thus, the image of $i^*$ is
zero and the induced linear map $\phi: \go(\der^b(\cs))\to
\go(\cR_A)$ is an isomorphism. Compose $\phi$ with
$p^*:\go(\der^b(\mod A))\to \go(\der^b(\cs))$ induced by
$p_*:\der^b(\mod A)\to \der^b(\cs)$, which is exactly the induced
map $\pi_A^*:\go(\der^b(\mod A))\to \go(\cR_A)$ by the canonical
functor $\pi_A:\der^b(\mod A)\to \cR_A$. In particular, $\pi_A^*$ is
an linear isomorphism.

Define the Euler bilinear form  $\chi_{\cR_A}(-,-)$on $\go(\cR_A)$
by
\[\chi_{\cR_A}([X], [Y])=\dim_k \cR_A(X, Y)-\dim_k\cR_A(X, \Sigma Y)
\]
for any $X, Y\in \cR_A$. Clearly, it is well-defined due to the
$2$-periodic property of $\cR_A$. Let $\chi_A(-,-)$ be the Euler
bilinear form over $\der^b(\mod A)$, {\it i.e.}
\[\chi_A([X],[Y])=\sum_{i\in \Z} (-1)^i\dim_k \der^b(\mod A)(X, \Sigma^i Y)
\]
for any $X, Y\in \der^b(\mod A)$. Since $\go(\cR_A)$ is generated by
$[\pi_AS_i], i=1, \cdots, n$, where $S_i$ are simple $A$-modules, we
have $\chi_{\cR_A}([\pi_AS_i], [\pi_AS_j])=\chi_A([S_i], [S_j])$ for
any $1\leq i,j\leq n$. Thus the symmetric bilinear form $(-|-)$ over
$\go(\cR_A)$  given by
\[([X]|[Y])=\chi([X],[Y])+\chi([Y],[X])
\]
is the same over $\go(\der^b(\mod A))$ via the isomorphism $\pi_A^*$.

In particular, we have proved the following
\begin{proposition}
The canonical functor $\pi_A:\der^b(\mod A)\to \cR_A$ induces an
isometry $\pi_A^*:\go(\der^b(\mod A))\to \go(\cR_A)$, {\it i.e.} a
linear map such that $\chi_{A}(x,y)=\chi_{\cR_A}(\pi_A^*x,\pi_A^*y)$
for any $x,y\in \go(\der^b(\mod A))$.
\end{proposition}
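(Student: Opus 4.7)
The plan is to observe that almost all of the ingredients have been assembled in Section~\ref{s:grothendieck group} already, and the proposition amounts to packaging them. First, from the discussion preceding the statement, the linear map $\pi_A^*$ is the composition $\phi\circ p^*$ of two isomorphisms, hence it is a linear isomorphism; so only the compatibility of Euler forms remains.

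Since both $\chi_A$ and $\chi_{\cR_A}$ are bilinear and $\go(\der^b(\mod A))$ is spanned by the classes $[S_1],\ldots,[S_n]$ of the simple $A$-modules, the plan is to reduce the isometry identity to checking
\[
\chi_{\cR_A}\bigl([\pi_A S_i],[\pi_A S_j]\bigr)=\chi_A([S_i],[S_j])\quad\text{for all }1\le i,j\le n,
\]
and then extend bilinearly. For the left-hand side, I would use that $\pi_A S_i,\pi_A S_j$ lie in the image of the fully faithful embedding $\der^b(\mod A)/\Sigma^2\hookrightarrow \cR_A$ coming from Keller's construction, so that for $\epsilon\in\{0,1\}$ the space $\cR_A(\pi_A S_i,\Sigma^\epsilon\pi_A S_j)$ is canonically identified with
\[
\bigoplus_{n\in\Z}\der^b(\mod A)(S_i,\Sigma^{2n+\epsilon}S_j)\;=\;\bigoplus_{n\in\Z}\Ext^{2n+\epsilon}_A(S_i,S_j).
\]
Because $A$ has finite global dimension these direct sums are finite, and taking dimensions gives
\[
\chi_{\cR_A}([\pi_A S_i],[\pi_A S_j])=\sum_{n\ge 0}\bigl(\dim\Ext^{2n}_A(S_i,S_j)-\dim\Ext^{2n+1}_A(S_i,S_j)\bigr)=\chi_A([S_i],[S_j]).
\]

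The only non-formal point is the fullness of the embedding $\der^b(\mod A)/\Sigma^2\hookrightarrow \cR_A$ between the classes $\pi_A S_i$; this is immediate from the alternative description of $\cR_A$ recalled in Section~\ref{s:dg orbit category}, where the Hom spaces in $\cR_A$ between representable objects $X,Y\in\cb$ are exactly the Hom spaces $\cb(X,Y)=\bigoplus_n\ca(X,\Sigma^{2n}Y)$ of the dg orbit category. So there is no real obstacle: the entire proof is a one-line bilinearity reduction plus the Hom-space computation on simples, and I would present it in that order.
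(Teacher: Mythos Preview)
Your proposal is correct and follows essentially the same route as the paper: the isomorphism $\pi_A^*$ is already established in the preceding discussion, and the isometry is reduced by bilinearity to the identity $\chi_{\cR_A}([\pi_A S_i],[\pi_A S_j])=\chi_A([S_i],[S_j])$ on simples, which the paper asserts and you justify explicitly via the $\Hom$-description of the orbit category. Your version is in fact a bit more detailed than the paper's, which simply states the equality on simples and then declares the proposition proved.
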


\begin{remark}
If $A$ is finite-dimensional  hereditary  $k$-algebra. We have
$\cR_A\cong \der^b(\mod A)/\Sigma^2$, this shows that
$\go(\der^b(\mod )/\Sigma^2)\cong \go(\der^b(\mod A))$. Let $\pi:
\der^b(\mod A)\to \cR_A$ which is dense in this case. Let
$\ol{\go(\cR_A)}$ be the Grothendieck group of $\cR_A$ induced by
the triangles of image $\pi$. We have $\go(\cR_A)\cong
\ol{\go(\cR_A)}$.
\end{remark}

Let $c_{\cR_A}$ be the  automorphism of $\go(\cR_A)$ induced by the
Auslander-Reiten translation of $\cR_A$. Let $c_{A}$ be the
automorphism of $\go(\der^b(\mod A))$ induced by the AR translation
of $\der^b(\mod A)$.
\begin{proposition}\label{coexter}
 $c_A$ identifies with  $c_{\cR_A}$ via the isomorphism $\pi_A^*: \go(\der^b(\mod A))\to \go(\cR_A)$.
\end{proposition}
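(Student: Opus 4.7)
The plan is to realize both Coxeter transformations $c_A$ and $c_{\cR_A}$ via the respective Serre functors acting on Grothendieck groups, and then conclude from the isometry $\pi_A^*$ established in the preceding proposition. On $\der^b(\mod A)$ the Serre functor is $\nu_A=?\lten_A DA$ and the AR translation is $\tau_A=\Sigma^{-1}\nu_A$, while Proposition~\ref{non-degenerate} together with Theorem~\ref{t:Serre functor} identifies the Serre functor on $\cR_A$ as $\nu_{\cR_A}=\Sigma^{-1}?\lten_{\cs}D\cs$ and so $\tau_{\cR_A}=\Sigma^{-1}\nu_{\cR_A}$. Passing to $\go$, where $\Sigma$ acts as $-\id$, we obtain $c_A=-[\nu_A]$ and $c_{\cR_A}=-[\nu_{\cR_A}]$; it therefore suffices to prove $\pi_A^*\circ[\nu_A]=[\nu_{\cR_A}]\circ\pi_A^*$.

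The key step is to characterise each Nakayama action on $\go$ by the Euler form. Applying Serre duality term by term to $\chi_A([X],[Y])=\sum_i(-1)^i\dim_k\der^b(\mod A)(X,\Sigma^i Y)$ yields
\[\chi_A(x,y)=\chi_A(y,[\nu_A]x)\qquad\text{for all }x,y\in\go(\der^b(\mod A)).\]
Because $A$ has finite global dimension, its Cartan matrix is invertible, so $\chi_A$ is non-degenerate and $[\nu_A]$ is the unique endomorphism of $\go(\der^b(\mod A))$ satisfying this identity. The same computation inside $\cR_A$, using Serre duality there together with the $2$-periodicity $\Sigma^2\cong\id$ (which collapses the shift discrepancy in $\nu_{\cR_A}$), produces $\chi_{\cR_A}(x,y)=\chi_{\cR_A}(y,[\nu_{\cR_A}]x)$. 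Since $\pi_A^*$ is an isometry by the previous proposition, $\chi_{\cR_A}$ is also non-degenerate and $[\nu_{\cR_A}]$ is uniquely characterised in the same manner.

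To finish, note that $\pi_A^*\circ[\nu_A]\circ(\pi_A^*)^{-1}$ satisfies the characterising identity for $(\go(\cR_A),\chi_{\cR_A})$, again because $\pi_A^*$ is an isometry. By uniqueness it must equal $[\nu_{\cR_A}]$, and multiplying by $-1$ delivers $\pi_A^*\circ c_A=c_{\cR_A}\circ\pi_A^*$. I expect the main technical obstacle to be the Serre-duality computation inside $\cR_A$: one must carefully verify that the shift correction built into $\nu_{\cR_A}=\Sigma^{-1}?\lten_{\cs}D\cs$ is consistently cancelled by the alternating signs in $\chi_{\cR_A}$ once $\Sigma^{-1}\cong\Sigma$ is invoked, so that the same formal swap identity as in $\der^b(\mod A)$ actually holds on the quotient.
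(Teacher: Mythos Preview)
Your argument is correct and takes a genuinely different route from the paper's own proof. The paper proceeds by a direct check on the basis of projectives: it writes $c_A([P_i])=[\Sigma^{-1}P_i\lten_A DA]$ and $c_{\cR_A}([\pi_A P_i])=[\Sigma^{-2}P_i\lten_{\cs}D\cs]$, and then invokes the computation $A\lten_{\cs}D\cs\cong\Sigma^{-1}DA$ (from the proof of Proposition~\ref{p:auto-equivalence}) together with $\Sigma^2\cong\id$ on $\cR_A$ to see that these two classes literally coincide under $\pi_A^*$. Your approach instead characterises each Nakayama action uniquely by the swap identity $\chi(x,y)=\chi(y,[\nu]x)$ coming from Serre duality, and then transports one to the other via the isometry $\pi_A^*$ from the preceding proposition. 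What this buys you is a clean, coordinate-free argument that avoids ever computing $?\lten_{\cs}D\cs$ on specific objects; the price is that you rely on non-degeneracy of $\chi_A$ (equivalently, invertibility of the Cartan matrix for an algebra of finite global dimension) and on the full isometry statement rather than just the group isomorphism. The paper's approach is more hands-on but also proves slightly more, since the identification $\pi_A(\Sigma^{-1}P_i\lten_A DA)\cong\Sigma^{-2}\pi_A(P_i)\lten_{\cs}D\cs$ actually holds at the level of objects, not only in $\go$. Your worry about the shift bookkeeping in $\cR_A$ is unfounded: once $\Sigma^{-1}\cong\Sigma$ is used, the two-term alternating sum in $\chi_{\cR_A}$ reproduces exactly the swap identity, as you anticipated.
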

\begin{proof}
Let $P_i$, $i=1, \cdots, n$ be the non-isomorphic indecomposable
projective modules of $A$. Since $A$ has finite global dimension, we
know that $[P_i]$, $i=1, \cdots, n$ also form a
 basis of $\go(\der^b(\mod A))$. Via the isomorphism $\pi_A^*$, $[\pi_A P_i]$ is also a basis of $\go(\cR_A)$.
 Thus, it suffices to show that $c_{\cR_A}(\pi_A P_i)$ coincides with $\pi_A^*c_{A}(P_i)$, $i=1, \cdots, n$. By the definition of $c_{\cR_A}$ and $c_A$, we have
\[c_{A}([P_i])=[\Sigma^{-1}P_i\lten_{A}DA], \ c_{\cR_A}([\pi_AP_i])=[\Sigma^{-2} P_i\lten_{\cs}D\cs],
\]
One can easily check that $[\Sigma^{-1}P_i\lten_{A}DA]=[\Sigma^{-2} P_i\lten_{\cs}D\cs]$ in $\go(\cR_A)$.
\end{proof}

\section{Motivating Examples}
\subsection{14 exceptional unimodular singularities}
 Inspired by the theory that the universal
deformation and simultaneous resolution of a simple singularity are
described  by the corresponding simple Lie
algebras~\cite{Brieskorn1970}, K. Saito associated in
\cite{Saito1985}, a generalization of root system to any regular
weight systems \cite{Saito1987}, and asks to construct a suitable
Lie theory in order to reconstruct the primitive forms for  the
singularities. This is well-done for simple singularities and simple
elliptic singularities. But, in general, it is not clear how to
construct a suitable Lie theory even for $14$ exceptional unimodular
singularities.

 Based on the duality theory of
the weight systems \cite{Saito1998} and the homological mirror
symmetry, Kajiura-Saito-Takahashi~\cite{Kajiura-Saito-Takahashi2005}
(Takahashi~\cite{Takahashi2005}) propose to study
 the triangulated
category $HMF_A^{gr}(f_W)$ of matrix factorizations of the
homogenous polynomial $f_W$ associated to a simple singularity $W$,
then the root system appears as the set of  the isomorphism classes
of  the exceptional objects via the Grothendieck group of the
triangulated category. This approach has been generalized to  the
case of regular weight systems with smallest exponent $\epsilon=-1$
in \cite{Kajiura-Saito-Takahashi2007} which includes the 14
exceptional unimodular singularities. In
\cite{Kajiura-Saito-Takahashi2005}, the authors show that the
category $HMF_A^{gr}(f_W)$ is  triangulated equivalent to the
bounded derived category of finitely generated modules over the path
algebra of the corresponding $ADE$-type. In
\cite{Kajiura-Saito-Takahashi2007}, they  show that the category
$HMF_A^{gr}(f_W)$ is triangulated equivalent to the bounded derived
category of finitely generated modules of certain finite-dimensional
algebras $A_W$.  Moreover, the Grothendieck groups of these
triangulated categories characterize the strange duality for the 14
exceptional unimodular singularities. In his survey article
\cite{Saito2008}, K. Saito proposes three methods to construct Lie
algebras for each exceptional singularity and asks which  Lie
algebra satisfies some extra requirements, for more details see
\cite{Saito2008}:
\begin{itemize}
\item[i)] the Lie algebra defined by the Chevalley generators and
generalized Serre relations for the Cartan matrix associated to
algebra $A_W$;
\item[ii)] the Lie subalgebra comes form vertex operator algebra for
the Grothendieck group $K_0(\cd^b(\mod A_W))$;
\item[iii)] the algebra constructed by Ringel-Hall construction for
the derived category of $\cd^b(\mod A_W)$.
\end{itemize}
We remark that for the simple singularities which are self-dual, if
one consider the Lie algebra iii) in the sense of Peng-Xiao
\cite{Peng-Xiao2000} for the root category, then these three Lie
algebras are isomorphic to each other. For the case of
$\epsilon=-1$, we remark that the algebra $A_W$ has global dimension
$2$ and it is not derived equivalent to any hereditary category.
 It is not clear that whether the $2$-periodic orbit
category $\cd^b(\mod A_W)/\Sigma^2$  is triangulated or not. Now the
root category $\cR_{A_W}$ seems to be a suitable consideration for
the Lie algebra $iii)$.  Then Peng-Xiao's Theorem
\cite{Peng-Xiao2000}(see also \cite{XuFan2009} ) implies that there
is a Lie algebra $\ch(\cR_{A_W})$ associated with $\cR_{A_W}$. We
remark that we do not know whether the Grothendieck group of $\cR_A$
is proper or not. In this case, the automorphism $c_{\cR_{A_W}}$ has
finite order $h_W$, where $h_W$ is the order of the Milnor monodromy
of the corresponding singularity $W$.

\subsection{An algebra of global dimension 2 }\label{elliptic algebra}

Let $Q$ be the following quiver
\[\xymatrix{3\ar@/^/[r]^{\alpha_1}\ar@/_/[r]_{\alpha_2}&2\ar@/^/[r]^{\beta_1}\ar@/_/[r]_{\beta_2}&1.}
\]
Let $I$ be the ideal generated by the relations $\beta_i\circ\alpha_i=0$. Let $A=kQ/I$ be the quotient algebra.  The global
dimension of $A$ is $2$. Let $S_i, i=1,2,3$, be the non-isomorphic
simple modules of $A$. Consider the Euler symmetric bilinear form of
$\go(\cR_A)$ given by
\[([X]|[Y])=\chi(X,Y)+\chi(Y,X),
\]
 for any $X, Y\in \cR_A$.
One can check that $([S_1]|[S_2])=-2, ([S_1]|[S_3])=2,
([S_2]|[S_3])=-2$. The bilinear form $(-|-)_{\go(\cR_A)}$ is
degenerate over $\go(\cR_A)$, one extends  $\go(\cR_A)$ to $\cl$
such that the bilinear form $(-|-)_{\cl}$ over $\cl$ is
non-degenerate, {\it i.e.} the restriction
$(-|-)_{\cl}|_{\go(\cR_A)}$ coincides with $(-|-)_{\go(\cR_A)}$.

Let $V_{\cl}$ be the lattice vertex operator algebra associated to
$\cl$. If we consider the Lie algebra $\mathfrak{g}_A$ generated by
the vertex operators $e^{\pm[S_i]}$ in the Lie algebra
$V_{L}/DV_{L}$, where $D$ is the derivative operator, then
$\mathfrak{g}_A$ is isomorphic to  the elliptic
algebra~\cite{Yoshii2000} of type $A_1^{(1,1)}$ and also isomorphic
to the toroidal algebra~\cite{Rao-Moody1994} of $\mathfrak{sl}_2$.

 Now consider the root category $\cR_A$ of $A$,  there is a Lie algebra $\ch(\cR_A)$(Ringel-Hall Lie algebra)
 associated with $\cR_A$. We would
 like to know what is the relation between $\ch(\cR_A)$ and the elliptic
 algebra $A_1^{(1,1)}$? At this moment, we can only show that $u_{[\Sigma S_1]}, u_{[S_2]},
 u_{[S_3]}$ satisfy the GIM Lie algebra relations \cite{Slodowy1986}.

  We also remark
 that up to now, there is not any triangulated category to realize
 the elliptic algebras of type $A$ and $D$(except for $D_4^{(1,1)}$) via the approach of Ringel-Hall
  Lie algebras. Similar to the above example,  by triangular extension of algebras, for any elliptic Lie algebra,
 one can construct a $2$-periodic triangulated category (possibly not unique)
 such that the Grothendieck group with the symmetric Euler bilinear form  characterizes the root lattice for the corresponding elliptic Lie algebra.
\subsection{A minimal example}\label{e:minimal} Let $Q$ be the following quiver with
relation
\[\xymatrix{2\ar@/^/[r]^{\alpha}&1\ar@/^/[l]^{\beta}}
\]
where $\beta\circ \alpha=0$. Let $A$ be the quotient algebra of path
algebra $kQ$ by the ideal generated by $\beta\circ \alpha$. Then $A$
is representation-finite and has global dimension $2$. Let
$\der^b(\mod A)$ be the bounded derived category of finitely
generated right $A$-modules. Let $\ca$ be the dg enhance of
$\der^b(\mod A)$, i.e. the dg category of bounded complexes of
finite generated projective $A$-modules. Let $\Sigma^2:\ca\to \ca$
be the dg enhance of the square of suspension functor of
$\der^b(\mod A)$. Let $\cb$ be the dg orbit category of $\ca$
respect to $\Sigma^2$({\it cf.} section ~\ref{s:dg orbit category}).
The canonical dg functor $\pi:\ca\to \cb$ yields a
$\cb\otimes_k\ca\op$-module
\[(B,A)\to \cb(B,\pi A),
\]
which induce the standard functors
\[\xymatrix{\der(\ca)\ar@<1ex>[r]^{\pi_*}&\der(\cb)\ar@<1ex>[l]^{\pi_{\rho}}}.
\]
Note that $\cR_A$ is the triangulated subcategory of $\der(\cb)$
generated by the representable functors.  We also have a triangle
equivalence $F:\der(\Mod A)\to \der(\ca)$. Now the composition
\[\der^b(\mod A)\hookrightarrow \der(\Mod A)\xrightarrow{F}\der(\ca)\xrightarrow{\pi_*}\der(\cb)
\]
gives the canonical functor $\pi_*:\der^b(\mod A)\to \cR_A$.
\begin{proposition}The canonical function $\pi_*: \der^b(\mod A)\to
\cR_A$ is not dense.
\end{proposition}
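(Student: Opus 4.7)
The plan is to argue by contradiction. Since $\pi_*$ factors as
\[\der^b(\mod A) \twoheadrightarrow \der^b(\mod A)/\Sigma^{2} \hookrightarrow \cR_A\]
with the second arrow fully faithful by Section~\ref{keller's construction}, essential surjectivity of $\pi_*$ would force this embedding to be an equivalence, in particular endowing the $2$-periodic orbit category with a triangulated structure inherited from $\der^b(\mod A)$. I would derive a contradiction by exhibiting an indecomposable object of $\cR_A$ that is not in the essential image of $\pi_*$.

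The starting point is the non-vanishing of $\Ext^2_A(S_2,S_2)\cong k$, which follows from the relation $\beta\circ\alpha=0$ together with $\mathrm{pd}_A(S_2)=2$ via the minimal projective resolution $0\to P_2\to P_1\to P_2\to S_2\to 0$. Working in the model $\cR_A=\der^b(\cs)/\per(\cs)$ with $\cs=A\oplus\Sigma A$, an object lies in the essential image of $\pi_*$ precisely when it admits a representative, modulo a perfect complex, of the form $p_*M$, i.e.\ a dg $\cs$-module on which the $\Sigma A$-summand of $\cs$ acts trivially. Using the $\Ext^2$-class as obstruction data, I would construct a dg $\cs$-module $Y\in \der^b(\cs)$ with essentially non-trivial $\Sigma A$-action whose image in $\cR_A$ is indecomposable; concretely, $Y$ is built as a two-term extension in $\der^b(\cs)$ that realises the class $u\in \Ext^2_A(S_2,S_2)$ through the $\Sigma A$-summand rather than through the differential.

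The main obstacle is to show rigorously that no perfect modification makes $Y$ quasi-isomorphic to some $p_*M$. Since $A$ is representation-finite (with an explicit finite list of indecomposable modules determined by the Kupisch-type analysis of the bound-quiver algebra $A$), $\pi_*$ has a finite essential image on indecomposables, so a case check is in principle feasible: I would compute the graded endomorphism algebra $\End_{\cR_A}(Y)$, controlled by the AR-structure of $\cR_A$ (Theorem~\ref{t:Serre functor}) together with Proposition~\ref{non-degenerate}, and compare it with $\End_{\cR_A}(\pi_*M)=\bigoplus_{n\in\Z}\der^b(\mod A)(M,\Sigma^{2n}M)$ for each indecomposable candidate $M$ and each shift. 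A mismatch then rules out all candidates. If this direct comparison proves delicate, an alternative is to invoke the universal property of Section~\ref{keller's construction}: an equivalence $\der^b(\mod A)/\Sigma^{2}\simeq \cR_A$ would endow the orbit category with cones of morphisms combining distinct $\Sigma^{2n}$-components of $\cR_A(\pi_*X,\pi_*Y)=\bigoplus_n\der^b(\mod A)(X,\Sigma^{2n}Y)$, and tracking the class $u$ through such a cone produces a configuration in $\der^b(\mod A)$ that cannot be realised by any single distinguished triangle, giving the desired contradiction.
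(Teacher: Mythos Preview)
Your high-level strategy---produce an explicit object of $\cR_A$ not in the essential image of $\pi_*$---is the same as the paper's, and your observation that $\Ext^2_A(S_2,S_2)\cong k$ is the relevant obstruction is correct. However, as written the proposal is not a proof: neither the candidate object nor the verification is carried out, and the methods you sketch for verification are likely to be either circular or inconclusive.

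The specific gap is in your test for non-membership. Comparing $\End_{\cR_A}(Y)$ against $\End_{\cR_A}(\pi_*M)$ for the finitely many indecomposable $M$ is \emph{not} obviously sufficient: there is no reason the exotic object must have a different endomorphism algebra from every $\pi_*M$, and in practice these endomorphism algebras in a $2$-periodic category tend to look alike (local with small radical). Your fallback---``tracking the class $u$ through such a cone produces a configuration in $\der^b(\mod A)$ that cannot be realised by any single distinguished triangle''---is essentially what one wants, but you have not said what configuration, nor why it is unrealisable. Working in the model $\der^b(\cs)/\per(\cs)$ and arguing that ``no perfect modification makes $Y$ quasi-isomorphic to some $p_*M$'' is hard to do directly, because isomorphism in a Verdier quotient involves arbitrary zig-zags through perfect cones.

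The paper's argument supplies the missing mechanism. It works in the model of Section~\ref{s:dg orbit category} and uses the right adjoint $\pi_\rho:\der(\cb)\to\der(\Mod A)$ of $\pi_*$, for which $\pi_\rho\pi_*(U)\cong\bigoplus_{i\in\Z}\Sigma^{2i}U$. Thus any object in the image of $\pi_*$ has \emph{decomposable} $\pi_\rho$-image. The paper then writes down explicit complexes $X,Y$ of projectives and maps $f\in\Hom(X,Y)$, $g\in\Hom(X,\Sigma^2Y)$, takes $Z=\mathrm{Cone}(\pi_*(f+g))$ in $\cR_A$, and computes $\pi_\rho Z$ as a concrete $2$-periodic unbounded complex which is quasi-isomorphic to $\cdots\to P_1\xrightarrow{h}P_1\xrightarrow{h}P_1\to\cdots$ with $h$ the composite $P_1\twoheadrightarrow S_1\hookrightarrow P_1$. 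This complex is indecomposable in $\der(\Mod A)$, so $Z$ cannot be $\pi_*U$ for any $U$. The point you are missing is precisely this use of $\pi_\rho$ as a detector: it converts the intractable question ``is $Z$ isomorphic, modulo perfect complexes, to something of the form $p_*M$?'' into the tractable question ``is an explicit periodic complex over $A$ indecomposable?''.
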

\begin{proof}
We will construct an object in $\cR_A$ which is not in the image of
$\pi_*$. Let $S_i, i=1,2$ be the simple $A$-modules associated to
the vertices $i$ and $P_i, i=1,2$ be the corresponding
indecomposable projective modules. Let $l:P_2\to P_1$ be the
embedding and $\gamma: P_1\twoheadrightarrow S_1\hookrightarrow
P_2$. Let X be the complex $\cdots\to 0\to
P_2\xrightarrow{(l,0)}P_1\oplus P_2\xrightarrow{(0,l)^t}P_1\to
0\cdots$, where $P_1\oplus P_2$ is in the $0$-th component. Let $Y$ be
the complex $\cdots\to 0\to 0\to P_2\xrightarrow{0}P_2\to 0\cdots$,
where the left $P_2$ is  in the $0$-th component. Let $f\in
\Hom_{\der^b(\mod A)}(X,Y)$ and $g\in \Hom_{\der^b(\mod A)}(X,
\Sigma^2 Y)$ be the followings
\[\xymatrix{0\ar[r]&P_2\ar[d]^{0}\ar[r]^{(l,0)}&P_1\oplus P_2\ar[d]^{(\gamma, 1)^t}\ar[r]^{(0,l)^t}&P_1\ar[d]^{\gamma}\ar[r]&0\\
0\ar[r] &0\ar[r]&P_2\ar[r]^{0}&P_2\ar[r]&0}
\]
\[\xymatrix{0\ar[r]\ar[d]^{0}&P_2\ar[d]^{1}\ar[r]^{(l,0)}&P_1\oplus P_2\ar[d]^0\ar[r]^{(0,l)^t}&P_1\ar[r]&0\\
P_2\ar[r]^0&P_2\ar[r]&0}
\]
Consider the mapping cone of $\pi_*(f+g)$ in $\cR_A$, we claim that
the mapping cone of $\pi_*(f+g)$ is not in the image of $\pi_*$.
Consider the triangle
\[\pi_*(X)\xrightarrow{\pi_*(f+g)}\pi_*(Y)\to Z\to \Sigma \pi_*(X).
\]
Applying the functor $\pi_{\rho}$, we get a triangle in $\der(\Mod
A)$
\[\pi_{\rho}\pi_*(X)\xrightarrow{\pi_{\rho}\pi_*(f+g)}\pi_{\rho}\pi_*(Y)\to\pi_{\rho}Z\to \Sigma \pi_{\rho\pi_*( X)}
\]
Note that for any $X\in \der^b{\mod A}$, we have
$\pi_{\rho}\pi_*(X)\cong \oplus_{i\in\Z}\Sigma^{2i}X$. Thus,
$\pi_{\rho}Z$ is isomorphic to the mapping cone of the following
chain map of complexes
\[\xymatrix{\cdots\ar[r]&P_1\oplus P_2\ar[d]^{(\gamma, 1)^t}\ar[r]^{\left(\begin{array}{cc}0&0\\l&0\end{array}\right)}&
P_1\oplus P_2\ar[d]^{(\gamma, 1)^t}\ar[r]^{\left(\begin{array}{cc}0&0\\l&0\end{array}\right)} &P_1\oplus P_2\ar[d]^{(\gamma,1)^t}\ar[r]^{\left(\begin{array}{cc}0&0\\l&0\end{array}\right)}&P_1\oplus P_2\ar[d]^{(\gamma,1)^t}\ar[r]&\cdots\\
\cdots\ar[r]&P_2\ar[r]^0&P_2\ar[r]^0&P_2\ar[r]^0&P_2\ar[r]&\cdots}
\]
In particular,  the mapping cone is
\[\xymatrix{\cdots\ar[r] &P_2\oplus P_1\oplus P_2\ar[r]^{\left(\begin{array}{ccc}0 &0& 0\\-\gamma&0&0\\-1&-l&0\end{array}\right)}&
P_2\oplus P_1\oplus P_2\ar[r]^{\left(\begin{array}{ccc}0 &0&
0\\-\gamma&0&0\\-1&-l&0\end{array}\right)}&P_2\oplus P_1\oplus
P_2\ar[r]&\cdots}
\]
Let $h:P_1\twoheadrightarrow S_1\hookrightarrow P_1$, consider the
complex $P:\cdots\to P_1\xrightarrow{h}P_1\xrightarrow{h}P_1\to
\cdots$, one can check that
\[\xymatrix{\cdots\ar[r] &P_2\oplus P_1\oplus P_2\ar[d]^{(-l,1,0)^t}\ar[r]^{\left(\begin{array}{ccc}0 &0& 0\\-\gamma&0&0\\-1&-l&0\end{array}\right)}&
P_2\oplus P_1\oplus
P_2\ar[d]^{(-l,1,0)^t}\ar[r]^{\left(\begin{array}{ccc}0 &0&
0\\-\gamma&0&0\\-1&-l&0\end{array}\right)}&P_2\oplus P_1\oplus
P_2\ar[d]^{(-l,1,0)^t}\ar[r]&\cdots\\
\cdots\ar[r]&P_1\ar[r]^h&P_1\ar[r]^h&P_1\ar[r]^h&\cdots}
\]
 is a quasi-isomorphism. In particular, $\pi_{\rho} Z$ is
 isomorphic to $P$ in $\der(\Mod A)$. If there exists $U\in \der^b(\mod A)$ such that
 $\pi_*(U)=Z$, then $\pi_{\rho}Z\cong \oplus_{i\in Z}\Sigma^{2i}U$.
 But one can easily show that $P$ is indecomposable in $\der(\Mod
 A)$. This completes the proof.
\end{proof}
The above example implies that in general the orbit category
$\der^b(\mod A)/\Sigma^2$ is not triangulated even with small global
dimension. In the appendix, we propose a way to construct various
examples from a known one by using recollement associates to root
categories. It would be interesting to konw that whether one can give an
example without oriented cycles such that the orbit category
$\der^b(\mod A)/\Sigma^2$ is not triangulated with inherited triangle structure.

\section{The ADE root categories}
In this section, we focus on the root categories of
finite-dimensional hereditary algebras of Dynkin type. We will show
that such root categories characterize the algebras up to derived
equivalence.
\subsection{Separation of AR-components}
Let $A$ be a finite dimensional $k$-algebra with finite global
dimension. Let $\pi_A:\der^b(\mod A)\to \cR_A$ be the canonical
triangle functor. By theorem~\ref{t:Serre functor}, we know that
$\cR_A$ has Serre functor, equivalently, Auslander-Reiten
triangles(AR-triangles). When $\pi_A$ is dense, it is quite easy to
show that $\pi_A$ preserves the AR-triangles, {\it i.e.} each
AR-triangle of $\cR_A$ comes from an AR-triangle of $\der^b(\mod A)$
via the canonical functor $\pi_A$. In particular, the AR-quiver of $\der^b(\mod A)$
determines the AR-quiver of $\cR_A$. In general, we have the
following
\begin{theorem}\label{t:separated}
Let $A$ be a finite-dimensional $k$-algebra with finite global
dimension and $\pi_A:\der^b(\mod A)\to \cR_A$ the canonical functor.
Then the functor $\pi_A$ maps AR-triangles of $\der^b(\mod A)$ to
AR-triangles of $\cR_A$. As a consequence, there is no irreducible
morphism between $\im \pi_A$ and $\cR_A\backslash \im \pi_A$.
\end{theorem}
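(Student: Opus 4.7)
The plan is to apply the Serre-duality characterization of AR-triangles: a triangle $Z\to E\to X\xrightarrow{w}\Sigma Z$ with $X$ indecomposable is an AR-triangle if and only if $w\in\Hom(X,\Sigma Z)=\Hom(X,\nu X)$ corresponds under Serre duality to a $k$-linear functional on $\End(X)$ that is nonzero on $\id_X$ and vanishes on the radical. I first establish that $\pi_A$ commutes with the Serre functors. The Serre functor on $\der^b(\mod A)$ is $\nu_A=?\lten_A DA$, and on $\cR_A$ it is $\nu=\Sigma^{-1}(?\lten_{\cs}D\cs)$ by Proposition~\ref{non-degenerate}. For $X\in\der^b(\mod A)$, using the identification $A\lten_{\cs}D\cs\cong\Sigma^{-1}DA$ from the proof of Proposition~\ref{p:auto-equivalence}, one has $\pi_A(X)\lten_{\cs}D\cs\cong X\lten_A(A\lten_{\cs}D\cs)\cong\Sigma^{-1}\pi_A(\nu_A X)$, so $\nu\pi_A X\cong\Sigma^{-2}\pi_A\nu_A X\cong\pi_A\nu_A X$ in $\cR_A$ by $2$-periodicity. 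In particular $\pi_A$ commutes with the AR-translation $\Sigma^{-1}\nu$.

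Next, take an AR-triangle $\tau_A X\to E\to X\xrightarrow{w}\Sigma\tau_A X$ in $\der^b(\mod A)$ with $X$ indecomposable, and apply $\pi_A$ to obtain a triangle $\pi_A\tau_A X\to\pi_A E\to\pi_A X\xrightarrow{\pi_A w}\nu\pi_A X$ in $\cR_A$. I first verify that $\pi_A X$ is indecomposable. Using the fully faithful embedding $\der^b(\mod A)/\Sigma^2\hookrightarrow\cR_A$, identify $\End_{\cR_A}(\pi_A X)=\bigoplus_{n\in\Z}\der^b(\mod A)(X,\Sigma^{2n}X)$ as a $\Z$-graded ring. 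Its degree-$0$ piece $\End_{\der^b(\mod A)}(X)$ is local, and for $n\neq 0$ any product of a degree-$n$ element with a degree-$(-n)$ one landing in $\End_{\der^b(\mod A)}(X)$ cannot be invertible: otherwise we would get an isomorphism $X\cong\Sigma^{2n}X$ in $\der^b(\mod A)$, impossible by the boundedness of cohomology. Hence $\rad\End_{\der^b(\mod A)}(X)\oplus\bigoplus_{n\neq 0}\der^b(\mod A)(X,\Sigma^{2n}X)$ is the unique maximal ideal, so $\End_{\cR_A}(\pi_A X)$ is local and $\pi_A X$ is indecomposable.

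For the functional property, the Serre pairing on $\cR_A$ between objects of $\im\pi_A$ should decompose as the sum over $n\in\Z$ of the $\der^b(\mod A)$-Serre pairings: the degree-$n$ summand of $\Hom_{\cR_A}(\pi_A X,\pi_A Y)$ pairs only with the degree-$(-n)$ summand of $\Hom_{\cR_A}(\pi_A Y,\nu\pi_A X)$. Granting this, the functional $\phi_{\pi_A w}$ on $\End_{\cR_A}(\pi_A X)$ associated with the degree-$0$ element $\pi_A w$ vanishes on every nonzero-degree component, while its restriction to $\End_{\der^b(\mod A)}(X)$ coincides with the functional $\phi_w$ from $\der^b(\mod A)$-Serre duality, which is nonzero on $\id_X$ and vanishes on $\rad\End_{\der^b(\mod A)}(X)$ by the AR hypothesis. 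Thus $\phi_{\pi_A w}$ is the trace functional and the image triangle is an AR-triangle. The main obstacle is establishing the compatibility of Serre pairings: it should follow by tracing Amiot's construction (Theorem~\ref{t:Amiot}) of the pairing on $\cR_A=\der^b(\cs)/\per(\cs)$ applied to the explicit $\per\cs$-covers $P_{A,\Sigma^n A}$ from the proof of Proposition~\ref{non-degenerate}, and checking that on morphisms coming from $p_*:\der^b(\mod A)\to\der^b(\cs)$ the pairing respects the orbit grading and reduces in degree $0$ to the $\der^b(\mod A)$-Serre pairing.

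For the consequence, suppose $f:Y\to Z$ is irreducible in $\cR_A$ with $Z=\pi_A X_0\in\im\pi_A$ indecomposable. In any $\Hom$-finite Krull--Schmidt triangulated category with AR-triangles, every irreducible morphism into $Z$ appears (up to isomorphism) as a component of the middle-term morphism in the AR-triangle ending at $Z$. By the theorem just proved, this AR-triangle is the image under $\pi_A$ of the AR-triangle $\tau_A X_0\to E_0\to X_0$ in $\der^b(\mod A)$, so its middle term $\pi_A E_0$ lies in $\im\pi_A$. Since $\cR_A$ is Krull--Schmidt and $\im\pi_A$ is closed under direct summands (an indecomposable summand of $\pi_A E_0$ is $\pi_A$ of an indecomposable summand of $E_0$), $Y\in\im\pi_A$. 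The case of irreducible morphisms starting at an object of $\im\pi_A$ is dual, using AR-triangles starting at it.
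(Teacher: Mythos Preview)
Your strategy via the trace-functional characterization of AR-triangles differs from the paper's. After establishing $\pi_A S(X)\cong\wt{S}\,\pi_A(X)$ for $X$ indecomposable (done by a Yoneda argument: summing the $\der^b(\mod A)$-Serre dualities over the $\Sigma^2$-orbit gives a natural isomorphism $\cR_A(\pi_A X,-)\cong D\cR_A(-,\pi_A SX)$ on $\im\pi_A$, which extends to all of $\cR_A$ since $\cR_A=\tria(\im\pi_A)$), the paper compares the image triangle directly with the AR-triangle in $\cR_A$ ending at $\pi_A X$, whose existence is already guaranteed by Theorem~\ref{t:Serre functor}. The almost-split property of the latter produces a morphism of triangles whose third component is an endomorphism $s$ of $\pi_A X$ with $v=\pi_A(h)\circ s$. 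If $s$ is not invertible, write $s=\sum_n s_n$ with $s_n\in\der^b(\mod A)(X,\Sigma^{2n}X)$; each $s_n$ fails to be a split epimorphism (for $n=0$ because $s_0\in\rad\End(X)$, for $n\neq 0$ because $X\not\cong\Sigma^{2n}X$ in $\der^b(\mod A)$), so the almost-split property of $h$ in $\der^b(\mod A)$ gives $\Sigma^{2n}h\circ s_n=0$ for every $n$, whence $\pi_A(h)\circ s=0$ and $v=0$, a contradiction.

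The gap in your proposal is precisely the one you flag as ``the main obstacle'': that the Serre pairing on $\cR_A$, built via Amiot's quotient procedure from local $\per\cs$-covers, respects the orbit grading and restricts in degree~$0$ to the $\der^b(\mod A)$-pairing. This is plausible but not automatic; Amiot's form is defined through roofs in the Verdier quotient and explicit covers, and checking compatibility with the $\Z$-grading would require a careful unwinding of that construction which you have not carried out. The paper's comparison-of-triangles argument bypasses this entirely: it never computes the Serre pairing on $\cR_A$, using only that $\cR_A$ has AR-triangles and that $h$ is almost split in $\der^b(\mod A)$ against morphisms from \emph{all} objects, in particular from every $\Sigma^{2n}X$. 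Your treatment of the indecomposability of $\pi_A X$ and of the final consequence about irreducible morphisms is correct and matches the paper's reasoning.
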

\begin{proof}
Recall that for arbitrary objects $X, Y\in \der^b(\mod A)$, we have
canonical isomorphism
\[\cR_A(\pi_A(X), \pi_A(Y))\cong \bigoplus_{i\in \Z}\der^b(\mod
A)(\Sigma^{2i}X, Y),
\]
and $\der^b(\mod A)(\Sigma^{2i}X, Y)$ vanishes for all but finitely
many $i$. Let $S$ and $\wt{S}$ be the Serre functors of $\der^b(\mod
A)$ and $\cR_A$ respectively. Firstly, we show that $\pi_AS(X)\cong
\wt{S}\pi_A(X)$ for any  indecomposable object $X\in \der^b(\mod
A)$.   Consider the  functor $D\cR_A(?,\pi_AS(X))$ over $\cR_A$,
where $D=\Hom_k(?,k)$ is the usual duality of $k$. We have the
following canonical isomorphism
\begin{eqnarray*}
D\cR_A(\pi_AX, \pi_AS(X))&\cong& D(\bigoplus_{i\in
\Z}\der^b(\mod A)(\Sigma^{2i}X, S(X)))\\
&\cong &\bigoplus_{i\in \Z}D\der^b(\mod A)(\Sigma^{2i}X,
S(X))\\
&\cong&\bigoplus_{i\in \Z}\der^b(\mod A)(X, \Sigma^{2i}X)\\
&\cong&\cR_A(\pi_AX, \pi_AX)
\end{eqnarray*}
The indecomposable property implies that $\cR_A(\pi_AX, \pi_AX)$ is
a local $k$-algebra. Let $\eta\in D\cR_A(\pi_AX, \pi_AS(X))$ be the
image of $1_{\pi_AX}\in \cR_A(\pi_AX, \pi_AX)$ via the canonical
isomorphism. Let $\eta^*:\cR_A(\pi_AX,?)\to D\cR_A(?,\pi_AS(X))$ be
the natural transformation corresponding to $\eta$. It is clear that
$\eta^*|_{\im \pi_A}$ is an isomorphism. Since $\cR_A$ is the
triangulated hull of $\im\pi_A$, one deduces that $\eta^*$ is an
isomorphism over $\cR_A$. In particular, $D\cR_A(?,\pi_AS(X))$ is
representable. On the other hand, the Serre functor $\wt{S}$ implies
$D\cR_A(?,\wt{S}\pi_AX)$ is also represented by $\cR_A(\pi_AX,?)$.
Thus, we have $\pi_AS(X)\cong \wt{S}\pi_AX$.

Let $\Sigma^{-1}SX\xrightarrow{f} Y\xrightarrow{g}
X\xrightarrow{h}S(X)$ be an AR-triangle of $\der^b(\mod A)$. Let
$\pi_A(\Sigma^{-1}SX)\xrightarrow{u} W\to
\pi_AX\xrightarrow{v}\pi_AS(X)$ be the AR-triangle in $\cR_A$.
Clearly, $\pi_A(f)$ is not a split monomorphism. Thus, by the
definition of AR-triangle, there is a morphism $t:W\to \pi_AY$ such
that $\pi_A(f)=t\circ u$. Namely, we have the following commutative
diagram of triangles
\[\xymatrix{\pi_A(\Sigma^{-1}SX)\ar@{=}[d]\ar[r]^u&W\ar[d]^t\ar[r]&\pi_AX\ar[d]^s\ar[r]^{v}&\pi_AS(X)\ar@{=}[d]\\
\pi_A(\Sigma^{-1}SX)\ar[r]^{\pi_A(f)}&\pi_A(Y)\ar[r]^{\pi_A(g)}&\pi_AX\ar[r]^{\pi_A(h)}&\pi_AS(X)}
\]
We claim that $s$ is an isomorphism. Suppose not, then $s$ is
nilpotent by the indecomposable of $X$. Then $\pi_A(h)\circ s=0$
follows form that $\Sigma^{-1}SX\xrightarrow{f} Y\xrightarrow{g}
X\xrightarrow{h}S(X)$ is an AR-triangle, which implies $v=0$,
contradiction. Thus, $t$ is isomorphism. In particular, the image of
$\Sigma^{-1}SX\xrightarrow{f} Y\xrightarrow{g} X\xrightarrow{h}S(X)$
is indeed an AR-triangle of $\cR_A$.

 Now one can easily deduce that there is no irreducible morphism
between $\im \pi_A$ and $\cR_A\backslash \im\pi_A$, which completes
the proof.
\end{proof}
\begin{remark}
Theorem~\ref{t:separated} has been proved for the generalized
cluster category in ~\cite{AmiotOppermann2010} by using different
approach. We remark that one can adapt a variant proof to deduce the
result for generalized cluster category. Indeed, by the
$2$-Calabi-Yau property of generalized cluster category, one can
deduce that the Serre functor of the derived category coincides with
the Serre functor of the generalized cluster category on the
objects. Then one shows that the functor $\pi_A$ preserves
AR-triangles. By the universal property of root category, the Serre
functor $S:\der^b(\mod A)\to \der^b(\mod A)$ will induce a functor
$\ol{S}:\cR_A\to \cR_A$. It would be interesting to compare it with
the Serre functor $\wt{S}$.
\end{remark}
\subsection{The ADE root categories}
Let $A$ and $B$ be finite-dimensional $k$-algebras with finite
global dimension. If $A$ and $B$ are derived equivalent, it is clear
that $\cR_A\cong \cR_B$. It would be interesting to characterize all
the algebras which have the same root category up to triangle
equivanlence. In general, this question seems to be very hard. In
the following we will characterize the algebras share the root
category with a path algebra of Dynkin quiver. Since the derived
category of Dynkin quiver is not dependent on the choice of
orientation, we assume $Q$ be the following quiver for simplicity.
\[\xymatrix@R=0.3cm{A_n:&1\ar[r]&2\ar[r]&\cdots\ar[r]&n-1\ar[r]&n\\
&1\ar[rd]\\D_n:&&3\ar[r]&4\ar[r]&\cdots\ar[r]&n\\&2\ar[ru]\\
&&&3\ar[d]\\
E_6:& 1\ar[r]&2\ar[r]&4\ar[r]&5\ar[r]&6\\
&&&3\ar[d]\\
E_7:&1\ar[r]&2\ar[r]&4\ar[r]&5\ar[r]&6\ar[r]&7\\
&&&3\ar[d]\\
E_8:&1\ar[r]&2\ar[r]&4\ar[r]&5\ar[r]&6\ar[r]&7\ar[r]&8}
\]
\begin{theorem}\label{t:ADE root category}
Let $A$ be a finite-dimensional $k$-algebra with finite global
dimension. If  the root category $\cR_A\cong \cR_{kQ}$ for some
Dykin quiver $Q$, then $A$ is derived equivalent to $kQ$.
\end{theorem}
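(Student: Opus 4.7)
The plan is to reduce the statement to Rickard's Morita theorem by finding a tilting complex in $\der^b(\mod A)$ whose endomorphism algebra is $kQ$. First I would establish that the canonical functor $\pi_A:\der^b(\mod A)\to \cR_A$ is dense. By Theorem~\ref{t:separated} the subcategory $\im \pi_A$ is closed under irreducible morphisms inside $\cR_A$, hence is a union of connected components of the Auslander-Reiten quiver of $\cR_A$. For $Q$ Dynkin, the AR-quiver of $\cR_{kQ}$ is the quotient of $\Z Q$ by the translation induced by $\Sigma^2$, which remains a connected translation quiver. Since $\im \pi_A$ is non-empty, connectedness forces $\im \pi_A = \cR_A$, so the embedding $\der^b(\mod A)/\Sigma^2\hookrightarrow \cR_A$ from Section~\ref{keller's construction} is an equivalence.

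With density in hand I would transport a projective generator of $\cR_{kQ}$: let $\ol T\in\cR_A$ be the object matching $\pi_{kQ}(kQ)\in\cR_{kQ}$ under the given equivalence, and pick any lift $T\in\der^b(\mod A)$ with $\pi_A(T)\cong \ol T$. The orbit Hom formula together with the hereditariness and projectivity of $kQ$ gives
\[\bigoplus_{i\in\Z}\Hom_{\der^b(\mod A)}(T,\Sigma^{2i+1}T)\cong \Hom_{\cR_A}(\ol T,\Sigma\ol T)=0,\]
so every odd-shift $\Hom$-space vanishes. The even shifts yield
\[\bigoplus_{i\in\Z}\Hom_{\der^b(\mod A)}(T,\Sigma^{2i}T)\cong \End_{\cR_A}(\ol T)\cong kQ,\]
and the crucial point is to isolate the $i=0$ summand.

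To do so I would use the bounded $t$-structure on $\der^b(\mod A)$ (compatible with the one on $\der^b(\cs)$ recalled in Section~\ref{s:grothendieck group}) to normalize the lift $T$: replace each indecomposable summand by a suitable $\Sigma^2$-shift so that all cohomology of $T$ is concentrated in a single fixed window. Then $\Hom(T,\Sigma^{2i}T)=0$ for $|i|$ large by boundedness, and a minimality argument within the remaining range leaves only $i=0$. Granting this, $T$ is a tilting complex with $\End(T)\cong kQ$; generation of $\der^b(\mod A)$ as a thick triangulated subcategory by $T$ follows from density together with the fact that $\ol T$ generates $\cR_A$. Rickard's theorem then yields a triangle equivalence $\der^b(\mod A)\cong \der^b(\mod kQ)$, \ie $A$ is derived equivalent to $kQ$.

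The principal obstacle will be this isolation step: showing individually that $\Hom(T,\Sigma^{2i}T)=0$ for $i\ne 0$, rather than merely that the direct sum equals $kQ$. A robust alternative I would pursue in parallel is to analyse the AR-quiver of $\der^b(\mod A)$ as a Galois covering of the AR-quiver $\Z Q/\Sigma^2$ of $\cR_A$ with deck group $\langle\Sigma^2\rangle$, argue (using Krull-Schmidt and the simple-connectedness of $\Z Q$) that this covering must be the universal cover $\Z Q$, and then read off a canonical projective slice in the AR-quiver whose direct sum is the desired tilting complex.
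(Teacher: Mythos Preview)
Your overall strategy matches the paper's: establish density of $\pi_A$ via the connectedness of the AR-quiver of $\cR_{kQ}$ and Theorem~\ref{t:separated}, lift $\pi_{kQ}(kQ)$ through the equivalence to a (partial) tilting complex $T$ in $\der^b(\mod A)$, and then show $T$ generates. Your density argument and the vanishing of odd-shift $\Hom$'s are exactly as in the paper, and your generation argument (density plus $\ol T$ generates $\cR_A$) is what the paper carries out via the commutative square with $\ol i$.

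The gap is precisely where you flag it: isolating the $i=0$ summand in $\bigoplus_i\Hom(T,\Sigma^{2i}T)\cong kQ$. Your proposed ``concentrate cohomology in a window, then minimality'' does not work as stated. If the summands of $T$ all have cohomology in degrees $[0,d]$, one still has no control over $\Hom(T,\Sigma^{2}T)$ or $\Hom(T,\Sigma^{-2}T)$; these can be nonzero $\Ext$-groups between objects in the heart, and nothing about boundedness or a vague minimality forces them to vanish. The Galois-covering alternative is plausible but would require an independent argument that the AR-quiver of $\der^b(\mod A)$ is actually $\Z Q$ and not some intermediate cover, which is essentially the theorem itself.

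The paper's resolution is more hands-on and uses the Dynkin combinatorics directly. Fix a sink-directed orientation so that $\dim_k\cR_{kQ}(\pi_{kQ}P_i,\pi_{kQ}P_j)\leq 1$ for $i\leq j$ and $=0$ for $i>j$. Lift each $F(\pi_{kQ}P_i)$ to some $X_i\in\der^b(\mod A)$ and then normalize the even shifts \emph{inductively along the tree}: set $M_n=X_n$ at the sink, and at each step the one-dimensionality of $\cR_A(F\pi_{kQ}P_{i-1},F\pi_{kQ}P_i)$ forces a \emph{unique} $r$ with $\der^b(\mod A)(\Sigma^{2r}X_{i-1},M_i)\cong k$ and all other even shifts vanishing. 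Taking $M=\bigoplus M_i$ one gets $\Hom(M,\Sigma^{2r}M)=0$ for $r\neq 0$ directly from this construction (and the odd case from your argument), so $M$ is a tilting complex with $\End(M)\cong kQ$. In short, the missing idea is to lift the indecomposable projectives one at a time and let the one-dimensional $\Hom$'s pin down each shift, rather than trying to normalize a pre-chosen lift of the whole of $kQ$.
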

\begin{proof}
Since $Q$ is finite Dykin quiver, the AR-quiver of $\der^b(\mod kQ)$
is connected. The canonical functor $\pi_{kQ}:\der^b(\mod kQ)\to
\cR_{kQ}$ is dense, which implies that the AR-quiver of $\cR_{kQ}$
is connected. By theorem~\ref{t:separated}, we inform that the
functor $\pi_A:\der^b(\mod A)\to \cR_A$ is dense. In particular, any
$X\in \cR_A$ has preimage in $\der^b(\mod A)$. Let $P_i,
i=1,\cdots,n$ be the indecomposable projective $kQ$-modules. It is
clear that
\[\dim_k\cR_{kQ}(\pi_{kQ}P_i,\pi_{kQ}P_j)\leq 1 \ \text{for}\ i\leq j \ \text{and}\
\cR_{kQ}(\pi_{kQ}P_i,\pi_{kQ}P_j)=0\ \text{for}\ i>j.
\]
Let $F:\cR_{kQ}\to \cR_A$ be the triangle equivalent functor. We
claim that there is an object $M=M_1\oplus\cdots\oplus M_n$ in
$\der^b(\mod A)$ such that \[\pi_A(M)=F(\pi_{kQ}(kQ)) \text{and}\
\der^b(\mod A)(M,\Sigma^tM)=0\ \text{for}\ t\neq 0.\]

 Let
$\{\Sigma^{2r}X_i|r\in \Z\}$ be the preimages of $F(\pi_{kQ}(P_i))$
in $\der^b(\mod A)$. Let us prove this claim for case $Q=A_n$, the
other cases are similar. Note that $n$ is a sink vertex, we can
choose $M_n=X_n$. Since $\cR_A(F(\pi_{kQ}(P_{n-1})),
F(\pi_{kQ}(P_n)))\cong k$, there is a unique $r_{n-1}\in \Z$ such
that $\der^b(\mod A)(\Sigma^{2r_{n-1}}X_{n-1}, M_n)\cong k$ and
$\der^b(\mod A)(\Sigma^{2t}X_{n-1}, X_n)=0$ for $t\neq r_{n-1}$. We
can take $M_{n-1}=\Sigma^{2r_{n-1}}X_{n-1}$. Replace $M_n$ by
$M_{n-1}$, one can construct $M_{n-2}$ uniquely. For any nonzero
$f:M_{n-2}\to M_{n-1}, g:M_{n-1}\to M_n$,the composition $g\circ
f\neq 0$. Inductively, one can construct $M_i$ for any $i=1, \cdots,
n$. Clearly, we have $\pi_A(M)\cong F(\pi_{kQ}kQ)$ and $\der^b(\mod
A)(M, \Sigma^{2r}M)=0$ for $r\neq 0$. $\der^b(\mod
A)(M,\Sigma^{2r+1}M)=0, r\in Z$ follows from
$\cR_{kQ}(\pi_{kQ}kQ,\Sigma\pi_{kQ}kQ)=0$. In particular, $M$ is a
(partial) tilting complex of $\der^b(\mod A)$. We have $\der^b(\mod
kQ)\cong \der^b(\mod \End_{\der^b(\mod A)}(M))\cong \tria(M)$, where
$\tria (M)$ is the thick subcategory of $\der^b(\mod A)$ contains
$M$. If we can show that $\tria(M)=\der^b(\mod A)$, then we are
done. Let $i:\der^b(\mod kQ)\xrightarrow{\sim}
\tria(M)\hookrightarrow \der^b(\mod A)$ be the composition. By the
universal property of root category, we have the following
commutative diagram
\[\xymatrix{\der^b(\mod kQ)\ar[d]^{\pi_{kQ}}\ar@{^{(}->}[r]^i&\der^b(\mod A)\ar[d]^{\pi_A}\\
\cR_{kQ}\ar[r]^{\ol{i}}&\cR_A}
\]
where $\ol{i}$ is induced by the full embedding $i$. It is clear
that $\ol{i}$ is also full and faithful, thus an equivalence, which
implies $i$ is dense and an equivalence.
\end{proof}
\subsection{Tame quiver of type $\wt{D}$ and $\wt{E}$}
Assume $Q$ be the following quiver
\[\xymatrix@R=0.3cm{&&2\ar[d]&&&n-1\ar[d]\\
\wt{D_n}:&1\ar[r]&3\ar[r]&\cdots\ar[r]&n-2\ar[r]&n\ar[r]&n+1\\
&&&3\ar[d]\\&&&4\ar[d]\\
\wt{E_6}:&1\ar[r]&2\ar[r]&5\ar[r]&6\ar[r]&7\\
&&&&4\ar[d]\\
\wt{E_7}:&1\ar[r]&2\ar[r]&3\ar[r]&5\ar[r]&6\ar[r]&7\ar[r]&8\\
&&&3\ar[d]\\
\wt{E_8}:&1\ar[r]&2\ar[r]&4\ar[r]&5\ar[r]&6\ar[r]&7\ar[r]&8\ar[r]&9}
\]
The theorem ~\ref{t:ADE root category} also holds for tame quiver of
type $\wt{D}$ and $\wt{E}$. One can adapt a variant proof of theorem
~\ref{t:ADE root category}.
\begin{proposition}
Let $A$ be a finite dimensional $k$-algebra with finite global
dimension. If  the root category $\cR_A\cong \cR_{kQ}$ for some tame
quiver $Q$ of type $\wt{D}\wt{E}$, then $A$ is derived equivalent to
$kQ$.
\end{proposition}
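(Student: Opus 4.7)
The strategy is to adapt the proof of Theorem~\ref{t:ADE root category}. The new difficulty is that for tame $Q$ the AR-quiver of $\cR_{kQ}$ is no longer connected: it consists of one transjective component together with a $\P^1_k$-family of tube components. So Theorem~\ref{t:separated} no longer immediately forces the canonical functor $\pi_A:\der^b(\mod A)\to\cR_A$ to be dense. The plan is to show instead that the transjective component is contained in $\im\pi_A$, which is all that is needed to run the Dynkin argument verbatim.

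Write $F:\cR_{kQ}\iso\cR_A$ for the given triangle equivalence, let $\ct_{kQ}$ be the transjective AR-component of $\cR_{kQ}$ (which contains each $\pi_{kQ}(P^{kQ}_i)$), and put $\ct_A:=F(\ct_{kQ})$. By Theorem~\ref{t:separated}, $\im\pi_A$ is a union of connected AR-components of $\cR_A$, so either $\ct_A\subseteq\im\pi_A$ or $\ct_A\cap\im\pi_A=\emptyset$. It suffices to exhibit an indecomposable projective $P^A_j$ of $A$ with $\pi_A(P^A_j)\in\ct_A$. Here I would invoke the well-known Jordan structure of the Coxeter transformation in tame type: for $Q$ of type $\wt{D}\wt{E}$, $c_{kQ}$ on $\go(\der^b(\mod kQ))$ has a size-two Jordan block at eigenvalue $1$ along the null root $\delta$ and therefore has infinite order. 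By Proposition~\ref{coexter} and the fact that the triangle equivalence $F$ commutes (up to natural isomorphism) with Serre functors, $c_{\cR_A}$ on $\go(\cR_A)$ is conjugate via $F_*$ to $c_{\cR_{kQ}}$, hence also has infinite order. On the other hand, an indecomposable in a tube of $\cR_{kQ}$ of rank $r$ is $\tau$-periodic, so its class in $\go(\cR_{kQ})$ is $c_{\cR_{kQ}}$-periodic. Since $\{[\pi_A(P^A_j)]\}_j$ is a $\Z$-basis of $\go(\cR_A)$ by Section~\ref{s:grothendieck group}, if every $F^{-1}\pi_A(P^A_j)$ belonged to a tube, a common power of $c_{\cR_{kQ}}$ would act trivially on all of $\go(\cR_{kQ})$, contradicting infinite order. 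Therefore $\ct_A$ meets $\im\pi_A$ and, by separation, $\ct_A\subseteq\im\pi_A$.

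Once $\ct_A\subseteq\im\pi_A$ is established, each $F(\pi_{kQ}P^{kQ}_i)\in\ct_A$ has a preimage in $\der^b(\mod A)$. Following the construction in Theorem~\ref{t:ADE root category}: since the underlying graphs $\wt{D},\wt{E}$ are trees, $\dim_k\cR_{kQ}(\pi_{kQ}P^{kQ}_j,\pi_{kQ}P^{kQ}_i)\leq 1$, and one inductively chooses the $\Sigma^2$-shifts of the preimages along a linear extension of $Q$ to produce $M_i\in\der^b(\mod A)$ with $\pi_A(M_i)\cong F(\pi_{kQ}P^{kQ}_i)$, with $\der^b(\mod A)(M_j,M_i)\cong\Hom_{kQ}(P^{kQ}_j,P^{kQ}_i)$, and with $\der^b(\mod A)(M_j,\Sigma^{2r}M_i)=0$ for $r\neq 0$. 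The odd-shift vanishing $\der^b(\mod A)(M,\Sigma^{2r+1}M)=0$ is inherited from $\cR_{kQ}(\pi_{kQ}kQ,\Sigma\pi_{kQ}kQ)=0$. Thus $M=\bigoplus_i M_i$ is a tilting complex in $\der^b(\mod A)$ with $\End M\cong kQ$, giving a full embedding $i:\der^b(\mod kQ)\iso\tria(M)\hookrightarrow\der^b(\mod A)$. The universal property of the root category applied to $\pi_A\circ i$ yields a fully faithful functor $\ol i:\cR_{kQ}\to\cR_A$ whose image contains the generator $\pi_A(M)\cong F(\pi_{kQ}kQ)$ of $\cR_A$; hence $\ol i$ is essentially surjective, $i$ is dense, and $\tria(M)=\der^b(\mod A)$, proving that $A$ is derived equivalent to $kQ$.

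The principal obstacle is the Coxeter step. Transferring the infinite-order property from $c_{kQ}$ all the way to $c_{\cR_A}$ goes through both Proposition~\ref{coexter} and the $K_0$-isomorphism induced by $F$, and one must confirm that $F$ indeed sends tubes to tubes (so that ``tube vs.\ transjective'' is an $F$-invariant notion) and that tubes of differing ranks do not obstruct the common-period argument.
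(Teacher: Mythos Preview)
Your proof is correct, but the key step---showing that the transjective component of $\cR_{kQ}$ lies in $F^{-1}(\im\pi_A)$---is handled by a genuinely different argument from the paper's.

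The paper argues categorically: assuming $\im\pi_A$ misses the transjective component, one has $\im\pi_A\subset T\cup\Sigma T$ where $T$ is the union of regular $kQ$-modules. Since $T$ is a hereditary abelian subcategory, Keller's Theorem~9.1 gives that $\der^b(T)/\Sigma^2$ is triangulated and fully embeds in $\cR_{kQ}$ as a proper thick subcategory. Then $\im\pi_A$ sits inside this proper subcategory, contradicting $\tria(\im\pi_A)=\cR_A$.

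Your route is a $K_0$-level Coxeter argument: regular indecomposables are $\tau$-periodic with period bounded by the tube ranks, so their $K_0$-classes are $c$-periodic; but the $[\pi_A(P^A_j)]$ form a basis of $\go(\cR_A)$, and if all $F^{-1}\pi_A(P^A_j)$ were regular then $c_{\cR_{kQ}}$ would have finite order on $K_0$, contradicting the size-two Jordan block at $1$ along $\delta$. This is more elementary---it avoids invoking Keller's Theorem~9.1 and stays at the level of linear algebra---while the paper's argument is more structural and does not depend on the specific spectral properties of $c$ in tame type. Your residual worries are harmless: triangle equivalences intertwine Serre functors up to natural isomorphism (hence AR-translates, hence tubes), and in type $\wt{D}\wt{E}$ there are at most three non-homogeneous tubes and all others are homogeneous, so the tube ranks are bounded and a common period exists.
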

\begin{proof}
It suffices to prove this proposition for $Q$ be the above quiver.
Clearly the canonical functor $\pi_{kQ}:\der^b(\mod kQ)\to \cR_{kQ}$
is dense. In this case, $\der^b(\mod kQ)$ is the union of
preprojective component, preinjective component and tubes up to
shift. If the image $\im \pi_A$ intersects with preprojective (resp.
preinjective) component nonempty, by theorem~\ref{t:separated},
every object in this component belongs to $\im \pi_A$. Then one can
adapt the proof of theorem ~\ref{t:ADE root category}. Now suppose
that $\im \pi_A$ intersects with both preprojective  and
preinjective component empty. Let $T$ be the union of $kQ$-modules
in the tubes. It is clear that $T$ is a  hereditary abelian
subcategory of $kQ$-modules. By theorem $9.1$ of ~\cite{Keller2005},
we know that $\der^b(T)/\Sigma^2$ is triangulated and we have the
following commutative diagram
\[\xymatrix{\der^b(T)\ar[d]^{\pi}\ar@{^{(}->}[r]^i&\der^b(\mod kQ)\ar[d]^{\pi_{kQ}}\\
\der^b(T)/\Sigma^2\ar[r]^{\ol{i}}&\cR_{kQ}}
\]
where $\ol{i}$ is induced by $i$. In particular, we know that
$\ol{i}$ is a full embedding. Now $\im\pi_A\subset T\cup \Sigma T$
implies that $\im\pi_A\subset \der^b(T)/\Sigma^2$, which contradicts
to $\tria(\im\pi_A)=\cR_A$.
\end{proof}

\section{Ringel-Hall Lie algebras and GIM Lie algebras}
Throughout this section, let $k$ be a field with $|k|=q$. We study
the Ringel-Hall Lie algebras of a class of finite-dimensional
$k$-algebras with global dimension $2$. Building on the
representation theory of these algebras, we will give a negative
 answer for a question on GIM-Lie algebras by Slodowy
in~\cite{Slodowy1986}. We remark that different counterexamples of
this question have been discovered by Alpen~\cite{Alpen1984} by
considering fixed point subalgebras of certain Lie algebras.
\subsection{Generalized intersection matrix Lie algebras}
We recall the generalized intersection matrix Lie algebra (GIM-Lie
algebra for short) following Slodowy~\cite{Slodowy1986}. A matrix
$A\in M_l(\Z)$ is called a {\it generalized intersection matrix}, or
GIM for short, if the followings are satisfied
\begin{eqnarray*}
&&A_{ii}=2\\
&&A_{ij}<0 \Longleftrightarrow A_{ji}<0\\
&&A_{ij}>0 \Longleftrightarrow A_{ji}>0
\end{eqnarray*}
If moreover $A$ is symmetric, then $A$ is called an {\it
intersection matrix}. Given a GIM $A\in M_l(\Z)$, a {\it root basis}
associated to $A$ is a triplet $(H,\triangledown,\vartriangle)$
consisting of
\begin{itemize}
\item[$\circ$] a finite dimensional $\Q$-vector space $H$;
\item[$\circ$] a family $\triangledown=\{\alpha_1^{\vee}, \cdots, \alpha_l^{\vee}\}$, where $\alpha_i^{\vee}\in
H$;
\item[$\circ$] a family $\vartriangle=\{\alpha_1, \cdots, \alpha_l\}$,
where $\alpha_i\in H^*=\Hom_{\Q}(H,\Q)$
\end{itemize}
satisfy the following
\begin{itemize}
\item[1)] both sets $\vartriangle$ and $\triangledown$ are linearly
independent;
\item[2)] $\alpha_j(\alpha_i^{\vee})=A_{ij}$ for all $1\leq i,j\leq l$;
\item[3)] $\dim_{\Q}H=2l-\rank A$.
\end{itemize}

 The {\it GIM-Lie algebra} $\mg=GIM(A)$ attached to the root basis $(H,
\triangledown, \vartriangle)$ is given by the generators
$\mh=H\otimes_{\Q}\C$ and $e_{\pm \alpha}, \alpha\in \vartriangle$
satisfying the following relations:
\begin{eqnarray*}
&(1)&[h,h']=0, h,h'\in \mh\\
&(2)&[h,e_{\alpha}]=\alpha(h)e_{\alpha}, h\in \mh, \alpha\in \pm \vartriangle\\
&(3)&[e_{\alpha}, e_{-\alpha}]=\alpha^{\vee}, \alpha\in \vartriangle\\
&(4)&ad(e_{\alpha})^{max(1,1-\beta(\alpha^{\vee}))}e_{\beta}=0,
\alpha\in \vartriangle, \beta\in \pm\vartriangle\\
&(5)&ad(e_{-\alpha})^{max(1,1-\beta(-\alpha^{\vee}))}e_{\beta}=0,
\alpha\in \vartriangle, \beta\in \pm\vartriangle.
\end{eqnarray*}
 If $A$ is a symmetrizable
generalized Cartan matrix, then the $GIM(A)$ is essentially the
Kac-Moody algebras associated to $(H, \triangledown,\vartriangle)$.

Let $ad: \mg\to \End(\mg)$ be the adjoint representation of $\mg$.
Consider the restriction of $ad$ to $\mh$, the Lie algebra $\mg$
decomposes into a direct sum
\[\mg=\bigoplus_{\gamma\in \mh^*}\mg_{\gamma}
\]
of eigenspaces
\[\mg=\{x\in \mg|[h,x]=\gamma(h)x \ \text{for all}\ h\in \mh\}.
\]
Clearly, we have $\mh\subseteq \mg_{0}$. The following question has
been addressed in ~\cite{Slodowy1986} by Slodowy: Does equality
hold?

If we consider the derived subalgebra $[\mg, \mg]$ of $\mg$, the
above question is equivalent to the following: Do we have $\dim_{\C}
[\mg, \mg]_{0}=l$?  We remark that the derived subalgebra $[\mg,
\mg]$ can be presented by generators $\alpha_i^{\vee}, 1\leq i\leq
l$ and $e_{\alpha}, \alpha\in \pm\vartriangle$ with the same
relations in $\mg$. In~\cite{Alpen1984}, Alpen  has given a negative
answer for this question by using Lie theory. In the following, we
will give a negative answer for this question via
representation-theoretic approach.

\subsection{The Ringel--Hall Lie algebra}
We recall the definition of the Ringel--Hall Lie algebra of a
$2$-periodic triangulated category following~\cite{Peng-Xiao2000}.
Let $\mathcal{R}$ be a Hom-finite $k$-linear triangulated category
with suspension functor $\Sigma$. By $\ind \mathcal{R}$ we denote a
set of representatives of the isoclasses of all indecomposable
objects in $\mathcal{R}$.

Given any objects $X,Y,L$ in $\mathcal{R}$, we define
\begin{eqnarray*}W(X,Y;L)&=&\{(f,g,h)\in \Hom_{\mathcal{R}}(X,L)\times \Hom_{\mathcal{R}}(L,Y)\times \Hom_{\mathcal{R}}(Y,\Sigma X)|\\
&&X\xrightarrow{f}L\xrightarrow{g} Y\xrightarrow{h}\Sigma X \text{
is a triangle}\}.
\end{eqnarray*}
The action of $\Aut(X)\times \Aut(Y)$ on $W(X,Y;L)$ induces the
orbit space
\[V(X,Y;L)=\{(f,g,h)^{\wedge}|(f,g,h)\in W(X,Y;L)\}
\]
where
\[(f,g,h)^{\wedge}=\{(af, gc^{-1}, ch(\Sigma a)^{-1})|(a,c)\in \Aut(X)\times\Aut(Y)\}.
\]
Let $\Hom_{\mathcal{R}}(X,L)_Y$ be the subset of
$\Hom_{\mathcal{R}}(X,L)$ consisting of
 morphisms $l:X\to L$ whose mapping cone $Cone(l)$ is isomorphic to $Y$. Consider the action of the
 group $\Aut(X)$ on $\Hom_{\mathcal{R}}(X,L)_Y$ by $d\cdot l=dl$, the orbit is denoted by $l^*$ and the
 orbit space is denoted by $\Hom_{\mathcal{R}}(X,L)_Y^*$.
  Dually one can also consider the subset $\Hom_{\mathcal{R}}(L,Y)_{\Sigma X}$
  of $\Hom_{\mathcal{R}}(L,Y)$ with the group action $\Aut(Y)$ and the orbit
   space $\Hom_{\mathcal{R}}(L,Y)_{\Sigma X}^*$. The following proposition is an observation due to
   ~\cite{XiaoXu2008}.
\begin{lemma}\label{l:hall-num}
$|V(X,Y;L)|=|\Hom_{\mathcal{R}}(X,L)_Y^*|=|\Hom_{\mathcal{R}}(L,Y)_{\Sigma
X}^*|$.
\end{lemma}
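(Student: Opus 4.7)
The plan is to exhibit an explicit bijection for the first equality and then deduce the second by a rotation-symmetric argument.

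For the first equality $|V(X,Y;L)| = |\Hom_{\mathcal{R}}(X,L)_Y^*|$, I would define a map
\[\Phi: V(X,Y;L) \longrightarrow \Hom_{\mathcal{R}}(X,L)_Y^*, \quad (f,g,h)^\wedge \longmapsto f^*.\]
Well-definedness is immediate from the action formula, since the first component of $(a,c)\cdot(f,g,h)$ differs from $f$ only by an element of $\Aut(X)$, so the $\Aut(X)$-orbit $f^*$ is unchanged. Surjectivity follows from the axiom TR1: given $l\in\Hom_{\mathcal{R}}(X,L)_Y$, complete $l$ to a triangle $X\xrightarrow{l}L\xrightarrow{g_0}Z\xrightarrow{h_0}\Sigma X$; since $Z\cong Y$, pick any isomorphism $\phi:Z\iso Y$ and observe that $(l,\phi g_0,h_0\phi^{-1})\in W(X,Y;L)$ maps to $l^*$.

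The main content is injectivity. Suppose $f^* = (f')^*$, so that $f' = fa^{-1}$ for some $a\in\Aut(X)$ (adopting the paper's convention). After acting by $(a,\id)$ I may assume the two representatives $(f,g,h)$ and $(f',g',h')$ satisfy $f = f'$. Thus I have two triangles
\[X\xrightarrow{f}L\xrightarrow{g}Y\xrightarrow{h}\Sigma X, \qquad X\xrightarrow{f}L\xrightarrow{g'}Y\xrightarrow{h'}\Sigma X\]
with identical first morphisms. Applying TR3 to the pair of identities $(\id_X,\id_L)$ produces a morphism $c:Y\to Y$ making the diagram of triangles commute, and the five-lemma for triangulated categories forces $c$ to be an automorphism. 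This $c$ is precisely the element of $\Aut(Y)$ exhibiting $(f,g',h')$ as $(\id_X,c)\cdot(f,g,h)$, which proves injectivity.

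For the second equality $|V(X,Y;L)| = |\Hom_{\mathcal{R}}(L,Y)_{\Sigma X}^*|$, I would rotate the triangle: sending $(f,g,h)\in W(X,Y;L)$ to the rotated triangle $L\xrightarrow{g}Y\xrightarrow{h}\Sigma X\xrightarrow{-\Sigma f}\Sigma L$ identifies $g$ as a morphism in $\Hom_{\mathcal{R}}(L,Y)_{\Sigma X}$. The analogous map $(f,g,h)^\wedge\mapsto g^*$ is then well-defined (the $\Aut(Y)$-component of the action moves $g$ within its $\Aut(Y)$-orbit), surjective by completing any such $g$ to a triangle and identifying its cone with $\Sigma X$, and injective by the symmetric application of TR3 with the roles of $X$ and $Y$ interchanged.

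The only real obstacle is verifying that the morphism $c$ produced by TR3 is genuinely an automorphism rather than an arbitrary endomorphism; this is handled by the standard five-lemma argument in triangulated categories, using that the outer vertical maps in the morphism of triangles are identities. Everything else is bookkeeping with the action formula and the rotation axiom, so no deep input beyond the axioms of a triangulated category is required.
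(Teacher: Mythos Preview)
The paper does not supply its own proof of this lemma; it simply attributes the statement to Xiao--Xu \cite{XiaoXu2008} as an ``observation'' and moves on. Your argument is a correct direct verification from the axioms of a triangulated category: the map $(f,g,h)^\wedge\mapsto f^*$ is well-defined because the $\Aut(X)\times\Aut(Y)$-action alters $f$ only within its $\Aut(X)$-orbit, surjectivity is exactly TR1, and injectivity is the standard combination of TR3 with the triangulated five-lemma (the fill-in $c$ is forced to be an isomorphism since the two outer vertical maps are identities). The second equality follows by the symmetric argument on the second component, as you say. There is nothing missing; your proof is precisely the kind of elementary check the cited reference carries out.
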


We assume further that $\mathcal{R}$ is \emph{$2$-periodic}, \ie
$\mathcal{R}$ is Krull--Schmidt and $\Sigma^2\cong 1$.

Let $\go(\mathcal{R})$ be the Grothendieck group of $\mathcal{R}$
and $I_{\mathcal{R}}(-,-)$ the symmetric Euler form of
$\mathcal{R}$. For an object $M$ of $\mathcal{R}$, we denote by
$[M]$ the isoclass of $M$ and by $h_M=\dimv M$ the canonical image
of $[M]$ in $\go(\mathcal{R})$. Let $\mh$ be the subgroup of
$\go(\mathcal{R})\otimes_{\Z}\Q$ generated by $\frac{h_M}{d(M)},
M\in \ind \mathcal{R}$, where $d(M)=\dim_k(\End(X)/rad \End(X))$.
One can naturally extend the symmetric Euler form to $\mh\times
\mh$. Let $\mn$ be the free abelian group with basis $\{u_X|X\in
\ind \mathcal{R}\}$. Let
\[\mg(\mathcal{R})=\mh\oplus\mn,
\]
a direct sum of $\Z$-modules. Consider the quotient group
\[\mg(\mathcal{R})_{(q-1)}=\mg(\mathcal{R})/(q-1)\mg(\mathcal{R}).
\]
Let $F_{YX}^L=|V(X,Y;L)|$. Then by Peng and Xiao
~\cite{Peng-Xiao2000} we know that $\mg(\mathcal{R})_{(q-1)}$ is a
Lie algebra over $\Z/(q-1)\Z$, called the \emph{Ringel--Hall Lie
algebra} of $\mathcal{R}$. The Lie operation is defined as follows.
\begin{itemize}
\item[(1)] for any indecomposable objects $X,Y\in \mathcal{R}$,
\[
[u_X,u_Y]=\sum_{L\in \ind
\mathcal{R}}(F_{YX}^L-F_{XY}^L)u_L-\delta_{X,\Sigma
Y}\frac{h_X}{d(X)},
\]
where $\delta_{X,\Sigma Y}=1$ for $X\cong \Sigma Y$ and $0$ else.
\item[(2)] $[\mh, \mh]=0.$
\item[(3)] for any objects $X,Y\in \mathcal{R}$ with $Y$ indecomposable,
\[[h_X,u_Y]=I_{\mathcal{R}}(h_X,h_Y)u_Y,\qquad [u_Y,
h_X]=-[h_X,u_Y].
\]
\end{itemize}
A  triangulated category $\ct$ is called {\it proper}, if for any nonzero indecomposable object $X\in \ct$, $\dimv X\neq 0 $ in the Grothendieck group $\go(\ct)$. If the $2$-periodic triangulated category $\cR$ is proper, then $[u_X, u_{\Sigma X}]=-\frac{h_X}{d(X)}$, which coincides the origin definition in~\cite{Peng-Xiao2000}. However, the proof in ~\cite{Peng-Xiao2000} is still valid for non-proper $2$-periodic triangulated category for the Lie bracket defined above ({\it cf.} \cite{Xiao-Xu-Zhang2006}).

\subsection{A class of finite-dimensional $k$-algebras}
Let $Q$ be the following quiver
\[\xymatrix{&&&& 0\ar@<1ex>[d]^{\alpha}\ar@<-1ex>[d]\\&n
&n-1\ar[l]&2\ar@{.>}[l]&1\ar[r]^\beta\ar[l]^{\gamma}&n+1\ar@{.>}[r]&\circ\ar[r]&n+m}
\]
We assume $m\geq 1, n\geq 2$.  Let $A$ be the quotient algebra of
path algebra $kQ$ by the ideal generated by $\beta\circ \alpha,
\gamma\circ \alpha$. It has global dimension $2$.

Let $E$ be a field extension of $k$ and set $V^E=V\otimes_kE$ for
any $k$-space $V$. Then $A^E$ is an $E$-algebra and, for $M\in \mod
A$, $M^E$ has a canonical $A^E$-module structure.  Clearly, $A^E$
still has global dimension $2$. Let $\cR_{A^E}$ be the root category
of $A^E$. Thus, one has the Ringel-Hall Lie algebra
$\mg(\cR_{A^E})_{(|E|-1)}$, which is a Lie algebra over
$\Z/(|E|-1)\Z$.

Let $\ol{k}$ be the algebraic closure of $k$ and set
\[\Omega=\{E|k\subseteq E\subseteq \ol{k}\ \text{ is a finite field
extension}\}.
\]
We consider the direct product $\prod_{E\in
\Omega}\mg(\cR_{A^E})_{(|E|-1)}$ of Lie algebras and let
$\cl\mg(\cR_A)$ be the Lie subalgebra of $\prod_{E\in
\Omega}\mg(\cR_{A^E})_{(|E|-1)}$ generated by
$u_{S_i}=(u_{S_i^E})_{E\in \Omega}$ and $u_{\Sigma S_i}=(u_{\Sigma
S_i^E})_{E\in \Omega}$ for all simple $A$-modules $S_i, 0\leq i\leq
m+n$. Clearly, $h_i=(h_{S_i^E})_{E\in \Omega}, 0\leq i\leq n+m$
belong to $\cl\mg(\cR_A)$. We call $\cl\mg(\cR_A)$ the {\it integral
Ringel-Hall Lie algebra} of $A$. Clearly, the algebra
$\cl\mg(\cR_A)$ has a grading by the Grothendieck group $\go(\cR_A)$
of $\cR_A$, namely,
\[\cl\mg(\cR_A)=\bigoplus_{\alpha\in
\go(\cR_A)}\cl\mg(\cR_A)_\alpha
\]
such that $\deg u_{S_i}=\dimv S_i, \deg (u_{\Sigma S_i})=\dimv
\Sigma S_i$. In particular, $h_i\in \cl\mg(\cR_A)_0$.

 Let $(-,-)$ be the symmetric Euler form of $\cR_A$ ({\it cf.}
section~\ref{s:grothendieck group}). Then image of simple
$A$-modules $[S_i], 0\leq i\leq n+m$ form a $\Z$-basis of
$\go(\cR_A)$. Define the matrix $C=(c_{ij})$, $c_{ij}=([S_i],[S_j])$
for $0\leq i, j\leq n+m$. One can easily show that $C$ is an
intersection matrix. Let $(H,\triangledown, \vartriangle)$ be a root
basis of $C$. Thus one can form the GIM Lie algebra $\mg(C)=GIM(C)$
associated to $C$. We are interested in its derived subalgebra
$\mg(C)'=[\mg(C), \mg(C)]$.
\begin{theorem}~\label{t:Lie algebra homom} There is a surjective Lie algebra homomorphism $\phi:\mg(C)'\to
\cl\mg(\cR_A)\otimes_{\Z}\C$ defined by
\begin{eqnarray*}
&&\alpha_i^{\vee}\mapsto h_i,\\
&&e_{\alpha_i}\mapsto u_{S_i}\\
&&e_{-\alpha_i}\mapsto -u_{\Sigma S_i}, 0\leq i\leq n+m.
\end{eqnarray*}
Moreover, $\phi$ keeps the gradations and $\dim_{\C}
(\cl\mg(\cR_A)\otimes_{\Z}\C)_0\geq m+n+2$. As a consequence, we infer
that $\dim_{\C}\mg(C)'_0\geq m+n+2$.
\end{theorem}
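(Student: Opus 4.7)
The plan is to verify in turn that $\phi$ is well-defined, surjective, grade-preserving, and then to produce the extra degree-$0$ element needed for the dimension bound.

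For well-definedness of $\phi$, the five defining relations of $\mg(C)'$ must be checked among the images. Relation $(1)$ holds automatically since $[\mh,\mh]=0$ in every Ringel--Hall Lie algebra $\mg(\cR_{A^E})$. Relation $(2)$ follows from the Peng--Xiao identity $[h_{S_i},u_{S_j}] = I_{\cR_A}([S_i],[S_j])u_{S_j} = c_{ij}u_{S_j}$, which is precisely encoded by the definition of $C$. Relation $(3)$ becomes $[u_{S_i},-u_{\Sigma S_i}] = h_i$; since each simple $S_i^E$ has endomorphism ring $E$, we have $d(S_i^E)=1$, so it remains only to show that the extra Hall summands appearing in the non-proper Peng--Xiao bracket in degree $0$ combine correctly. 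The Serre-type relations $(4)$--$(5)$ are verified componentwise in each $\mg(\cR_{A^E})_{(|E|-1)}$: for each $i\neq j$ one checks $\opname{ad}(u_{S_i})^{\max(1,1-c_{ij})}u_{S_j}^{\pm}=0$ by direct computation of iterated Hall brackets, using the explicit description of the $\Hom$ and $\Ext$ groups dictated by $Q$ and the relations $\gamma\alpha,\beta\alpha$ (in particular, the positive entries $c_{02},c_{0,n+1}>0$ coming from these length-two relations give the Serre exponent $1$, i.e.\ simple commutation vanishing).

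Surjectivity is immediate since $\cl\mg(\cR_A)\otimes\C$ is generated by the $u_{S_i}^{\pm}$, all of which lie in the image of $\phi$. Grading preservation is a direct check: endow $\mg(C)'$ with a grading by $\go(\cR_A)$ via $\alpha_i\mapsto [S_i]$, so that $\deg\alpha_i^{\vee}=0$ and $\deg e_{\pm\alpha_i}=\pm[S_i]$; all defining relations are homogeneous in this grading, and $\phi$ sends generators to elements of the correct grade in $\cl\mg(\cR_A)\otimes\C$.

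For the dimension bound, the elements $h_0,\ldots,h_{n+m}$ are linearly independent in the degree-$0$ component because they project to the basis $\{[S_i]\}$ of $\go(\cR_A)\otimes\Q$ under $\mh\hookrightarrow\go(\cR_A)\otimes\Q$, providing $m+n+1$ independent elements. To produce one more, we adapt the construction of Section~\ref{e:minimal} to $A$. Working over an algebraically closed extension $E/k$ and exploiting the vanishing $\gamma\alpha=\beta\alpha=0$, we build bounded complexes $X,Y$ of projective $A^E$-modules together with a morphism $\pi_{A^E}(f+g)$ in $\cR_{A^E}$ carrying a non-zero $\Sigma^{2}$-component; its cone $Z$ is an indecomposable object lying outside $\im\pi_{A^E}$ whose image under the alternative $\pi_\rho$-functor is an infinite periodic complex, so that $\dimv Z=0$ in $\go(\cR_{A^E})$. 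Because $\mh$ and $\mn$ are complementary $\Z$-summands of the Ringel--Hall Lie algebra, the element $u_Z\in\mn_0$ is automatically linearly independent from $h_0,\ldots,h_{n+m}\in\mh_0$. The main obstacle is the final step: to ensure $u_Z$ lies in the integral subalgebra $\cl\mg(\cR_A)\otimes\C$ generated by the $u_{S_i}^{\pm}$ rather than merely in the ambient product, one must realise $u_Z$ as a non-vanishing summand of an iterated Lie bracket of generators of total degree $0$ (of the form $[[u_{S_{i_1}}^{\epsilon_1},u_{S_{i_2}}^{\epsilon_2}],[u_{S_{i_3}}^{\epsilon_3},u_{S_{i_4}}^{\epsilon_4}]]$ with $\sum\epsilon_k[S_{i_k}]=0$) and compute the Hall-theoretic decomposition to locate $u_Z$ with non-zero coefficient. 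Once this is established, grade-preserving surjectivity of $\phi$ then yields $\dim_\C\mg(C)'_0\geq\dim_\C(\cl\mg(\cR_A)\otimes\C)_0\geq m+n+2$.
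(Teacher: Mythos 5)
Your outline has the right shape (well-definedness, surjectivity, grading, then one extra degree-zero element), but two of the steps you lean on are not actually carried out, and in both cases the paper's proof supplies a mechanism that your sketch is missing. First, the Serre relations $(4)$--$(5)$: you assert they can be checked ``by direct computation of iterated Hall brackets'' in each $\mg(\cR_{A^E})_{(|E|-1)}$, but this computation is the crux of well-definedness and you never perform it; the delicate cases are exactly the pairs involving the vertex $0$ with $2$ and $n+1$, where the relations $\beta\alpha=\gamma\alpha=0$ create nonzero $\Ext^2$ and hence positive off-diagonal entries of $C$. The paper avoids the direct computation entirely: for each pair $(i,j)$ it passes to a quotient algebra $B$ of $A$ (hereditary of type $\wt{A_1}$, $A_{m+n}$, or a tilted algebra of type $\wt{A_2}$), uses the Cline--Parshall--Scott embedding $\der^b(\mod B)\hookrightarrow\der^b(\mod A)$ together with Lemma~\ref{l:fully faithful} to get a fully faithful $\cR_B\to\cR_A$, and then invokes Peng--Xiao's theorem identifying $\cl\mg(\cR_B)\otimes_\Z\C$ with a Kac--Moody algebra in which the Serre relations are known. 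Without either that reduction or an explicit Hall-number computation, relations $(4)$--$(5)$ remain unverified.

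Second, and more seriously, your dimension bound hinges on a step you yourself flag as ``the main obstacle'': you want an indecomposable $Z$ outside $\im\pi_{A^E}$ with $\dimv Z=0$ and then need $u_Z$ to lie in the \emph{integral} subalgebra $\cl\mg(\cR_A)$ generated by the $u_{S_i}^{\pm}$, which you do not establish (membership in the ambient product $\prod_E\mg(\cR_{A^E})_{(|E|-1)}$ is not enough, since the grading and the bound are taken inside the subalgebra). Moreover the construction of $Z$ itself is only borrowed by analogy from the minimal example of Section~\ref{e:minimal}, which concerns a different algebra. The paper's Lemma~\ref{l:dimension of 0th gradation} sidesteps both problems at once: it takes the honest $A$-module $M$ with composition series $S_0,S_1,S_2,S_{n+1}$, shows $u_M=[[[u_{S_0},u_{S_1}],u_{S_2}],u_{S_{n+1}}]$ so that $u_M,u_{\Sigma M}\in\cl\mg(\cR_A)$ by construction, and then computes $[u_M,u_{\Sigma M}]=-h_M+u_L-u_{\Sigma L}$, where $L$ is the (indecomposable) cone of a radical endomorphism of $\Sigma M$. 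The bracket automatically lies in $\cl\mg(\cR_A)_0$, and the summand $u_L-u_{\Sigma L}\in\mn_0$ forces it out of the span of $h_0,\dots,h_{m+n}$. You should replace your $Z$-construction by this bracket argument (or else genuinely carry out the Hall-theoretic decomposition you allude to); as written, the inequality $\dim_\C(\cl\mg(\cR_A)\otimes_\Z\C)_0\geq m+n+2$ is not proved.
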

The proof of this theorem carries throughout the rest of this
section.
\begin{lemma}\label{l:dimension of 0th gradation}
Let $M$ be the unique indecomposable $A$-module with composition
series $S_0,S_1,S_2,S_{n+1}$. Then $u_{M}=(u_{M^E})_{E\in \Omega}\in
\cl\mg(\cR_A)$ and $0\neq [u_{M}, u_{\Sigma M}]\not\in
\wt{\mh}\otimes_{\Z}\C$, where $\wt{\mh}$ is the subspace of
$\cl\mg(\cR_A)$ spanned by $h_i, 0\leq i\leq n+m$.
\end{lemma}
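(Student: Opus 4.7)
The statement splits into three parts: existence and uniqueness of $M$, the membership $u_M\in\cl\mg(\cR_A)$, and the bracket computation. For the first, any module with composition factors $S_0,S_1,S_2,S_{n+1}$ is supported on these four vertices with multiplicity one. The presence of $S_{n+1}$ and $S_2$ forces $\beta$ and $\gamma$ to act as isomorphisms on $M_1$, and the relations $\beta\alpha=\gamma\alpha=0$ then force $\alpha:M_0\to M_1$ to vanish. For $M$ to be indecomposable, the second parallel arrow $0\to 1$ must act as an isomorphism, which determines $M$ up to isomorphism. The same prescription works over any extension $E\supseteq k$, so the tuple $u_M=(u_{M^E})_E$ is well-defined.

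To show $u_M\in\cl\mg(\cR_A)$ I would argue inductively on composition length, using the filtration $0\subset S_2\oplus S_{n+1}\subset N\subset M$ with $N/(S_2\oplus S_{n+1})\cong S_1$ and $M/N\cong S_0$. Each short exact sequence produces a triangle in $\cR_{A^E}$ for every $E$, so the Peng--Xiao bracket of the classes of the top and the bottom contains the class of the middle with a nonzero coefficient; iterated brackets of the $u_{S_i}$'s then express $u_M$ as a linear combination of $u_L$'s for indecomposables $L$ of composition length $\le 4$, all of which are already in $\cl\mg(\cR_A)$ by induction.

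The Peng--Xiao formula then gives
\[
[u_M,u_{\Sigma M}]=\sum_{L\in\ind\cR_A}(F^L_{\Sigma M,M}-F^L_{M,\Sigma M})u_L-\tfrac{h_M}{d(M)}.
\]
A direct check shows $\End_A(M)\cong k$ (all components of an endomorphism are forced equal by the nonzero arrows acting on $M$), so $d(M)=1$ and the trailing term is $-h_M$. Every $L$ on the right satisfies $[L]=[M]+[\Sigma M]=0$ in $\go(\cR_A)$, so the bracket sits in the degree-$0$ component. Since $\wt{\mh}\otimes_{\Z}\C$ is spanned over $\C$ by the $h_i$, any nonzero $u_L$-summand with $L$ indecomposable automatically lies outside $\wt{\mh}\otimes_{\Z}\C$, so it suffices to exhibit one nonzero coefficient.

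The main obstacle is to do exactly this. The minimal projective resolution of $M$ has length exactly $2$ (since $\beta\alpha=\gamma\alpha=0$ are genuine relations), and a short $\Hom_A(-,M)$ calculation gives $\Ext^1_A(M,M)=0$ and $\Ext^2_A(M,M)\cong k$. Thus $\Hom_{\cR_A}(\Sigma M,M)\cong\Ext^1_A(M,M)=0$, so every triangle $M\to L\to\Sigma M\to\Sigma M$ splits and $F^L_{\Sigma M,M}=0$ for every indecomposable $L$. On the other hand $\End_{\cR_A}(M)\cong k\oplus k$ has exactly one orbit of non-invertible nonzero morphisms under $\Aut(\Sigma M)\times\Aut(M)$, represented by a nonzero $\xi\in\Ext^2_A(M,M)$; the corresponding middle $L_\xi$ in the triangle $\Sigma M\to L_\xi\to M\xrightarrow{\xi}M$ contributes $+1$ to $F^{L_\xi}_{M,\Sigma M}$. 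Verifying that $L_\xi$ is indecomposable---the delicate step, to be carried out by computing $\End_{\cR_A}(L_\xi)$ from this defining triangle using the $\Hom$, $\Ext^1$, $\Ext^2$ data above---then yields $[u_M,u_{\Sigma M}]=-u_{L_\xi}-h_M$, which is manifestly nonzero and outside $\wt{\mh}\otimes_{\Z}\C$.
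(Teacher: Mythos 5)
Your setup (uniqueness of $M$, $\End_A(M)\cong k$, $\Ext^1_A(M,M)=0$, $\Ext^2_A(M,M)\cong k$) agrees with the paper, but there is a genuine error in the bracket computation, at exactly the point where the real work lies. You claim $F^L_{\Sigma M,M}=0$ for every indecomposable $L$ because $\Hom_{\cR_A}(\Sigma M,M)\cong\Ext^1_A(M,M)=0$. That is the wrong Hom space: a triangle $M\to L\to\Sigma M\xrightarrow{f}\Sigma M$ counted by $F^L_{\Sigma M,M}=|V(M,\Sigma M;L)|$ has its connecting morphism $f$ in $\Hom_{\cR_A}(\Sigma M,\Sigma M)\cong\End_{\cR_A}(M)\cong\Hom_A(M,M)\oplus\Ext^2_A(M,M)$ --- the same two-dimensional local ring with one-dimensional radical that governs your triangles $\Sigma M\to L_\xi\to M\xrightarrow{\xi}M$. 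By the very argument you apply to $\xi$, a nonzero radical $f$ gives a single orbit of non-split triangles whose middle term is indecomposable (this is what the paper verifies by an explicit mapping-cone computation), so $F^{L'}_{\Sigma M,M}=1$ for $L'\cong\Sigma L_\xi$. The correct bracket is therefore
\[
[u_M,u_{\Sigma M}]=-h_M+u_{\Sigma L_\xi}-u_{L_\xi},
\]
not $-h_M-u_{L_\xi}$. This matters: your formula makes non-membership in $\wt{\mh}\otimes_{\Z}\C$ automatic, whereas the correct one requires additionally that $L_\xi\not\cong\Sigma L_\xi$ in $\cR_A$ (otherwise the two $u$-terms cancel and the bracket collapses to $-h_M$, which \emph{does} lie in $\wt{\mh}\otimes_{\Z}\C$). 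So your error silently removes the one remaining point that must be checked; the paper's explicit description of $L$ as a complex with homology $M$ concentrated in two adjacent degrees is what one needs to settle it.

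A secondary gap: your argument for $u_M\in\cl\mg(\cR_A)$ is circular as written --- you conclude that the iterated brackets express $u_M$ in terms of $u_L$'s ``already in $\cl\mg(\cR_A)$ by induction,'' but membership of those $u_L$ in the subalgebra generated by the $u_{S_i},u_{\Sigma S_i}$ is exactly what is in question, and the brackets $[u_{S_0},u_{S_1}]$ etc.\ a priori produce several indecomposable middle terms (there are two arrows $0\to1$). The paper instead verifies the single identity $u_{M^E}=[[[u_{S_0^E},u_{S_1^E}],u_{S_2^E}],u_{S_{n+1}^E}]$, where the relations $\beta\alpha=\gamma\alpha=0$ force all unwanted terms to drop out; some version of that computation is needed.
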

\begin{proof}
One can easily check that
$u_{M^E}=[[[u_{S_0^E},u_{S_1^E}],u_{S_2^E}],u_{S_{n+1}^E}]$ by using
lemma~\ref{l:hall-num} for any $E\in \Omega$. Thus, both $u_M,
u_{\Sigma M}$ belong to $\cl\mg(\cR_A)$. Let $P_i$ be the
indecomposable projective $A$-modules associated to each vertex $i$.
Let $\to P_0\to P_1\to P_2\oplus P_{n+1}\to M\to 0$ be the
projective cover of $M$.  We infer that
$\Hom_{\cR_A}(M,M)=\Hom_{\cd^b(\mod A)}(M,M)\oplus \Hom_{\cd^b(\mod
A)}(M,\Sigma^2 M)$. Moreover, $\dim_k\Hom_{\cR_A}(M,M)=2$ and
$\dim_k\rad \Hom_{\cR_A}(M,M)=1$.

Now consider triangles in $\cR_A$
\[M\to L\to \Sigma M\xrightarrow{f} \Sigma M \ \text{and}\ \Sigma M\to N\to
M\xrightarrow{g}\Sigma^2 M,
\]
we can write $f=f_0+f_1$, where $f_0\in \Hom_{\cd^b(\mod A)}(\Sigma
M,\Sigma M)$ and $f_1\in \Hom_{\cd^b(\mod A)}(\Sigma M,\Sigma^3M)$.
If $f_0\neq 0$, then $f$ is an isomorphism and $L\cong 0$. Thus, it
suffices to consider for $f_0=0$, {\it i.e.} $0\neq f\in \rad
\Hom_{\cR_A}(M,M)$, and then the triangle $M\to L\to\Sigma
M\xrightarrow{f}\Sigma M$ is induced by a triangle in $\cd^b(\mod
A)$. By computing the mapping cone of $f$ in $\cd^b(\mod A)$, we
infer that $L$ isomorphic to the complex $\cdots \to 0\to
P_0\xrightarrow{(f,l)}M\oplus P_1\to P_2\oplus P_3\to 0\cdots$,
where $P_2\oplus P_3$ is in the $-1$-th component. We claim that $L$
is indecomposable in $\cd^b(\mod A)$. Indeed, suppose $L\cong
X\oplus Y$ in $\cd^b(\mod A)$. Then $H^{*}(L)\cong H^{*}(X)\oplus
H^*(Y)$, where $H^*(-)$ be the homology groups of corresponding
complex. Now the only nonzero homology groups of $L$ are
$H^{-1}(L)\cong H^{-2}(L)\cong M$, which are indecomposable
$A$-modules. Thus, we may assume $X\cong \Sigma^2 M$ and $Y\cong
\Sigma M$. Now in the root category $\cR_A$, we have $\Sigma^2M\cong
M$. In particular, we get a triangle $M\to M\oplus \Sigma M\to
\Sigma M\xrightarrow{f} \Sigma M$. By a well-known fact, the
triangle is split and $f=0$, contradiction.  Since $\dim_{k}\rad
\Hom_{\cR_A}(M,M)=1$, for any $f, h\in \rad \Hom_{\cR_A}(M,M)$, the
mapping of $f$ and $h$ are isomorphic to each other.

Similarly, one can discuss for $g$ and show that $N$ is
indecomposable if and only if $0\neq g\in \Hom_{\cd^b(\mod
A)}(M,\Sigma^2 M)$. In this case, we have $N\cong \Sigma^{-1}L$. Now
by the definition of the Lie bracket, we have
\begin{eqnarray*}[u_M,u_{\Sigma M}]&=&-h_{M}+\sum_{L\in \ind \cR_A}(F_{\Sigma M,
M}^L-F_{M,\Sigma M}^L)u_L\\
&=&-h_M+F_{\Sigma M, M}^Lu_L-F_{M,\Sigma
M}^{\Sigma^{-1}L}u_{\Sigma^{-1} L}
\end{eqnarray*}
One can show that $\dim_k\Hom_{\cR_A}(M,L)=1$. Therefore, by
lemma~\ref{l:hall-num} we have $F_{\Sigma M, M}^L=F_{M,\Sigma
M}^{\Sigma^{-1}L}=1$. In particular, we have $[u_M, u_{\Sigma
M}]=-h_M+u_L-u_{\Sigma L}$ in $\mg(\cR_A)_{(q-1)}$. We remark that
the proof above is valid for any finite field extension of $k$. Thus,
in the integral Ringel-Hall Lie algebra $\cl\mg(\cR_A)$, we also
have $[u_M, u_{\Sigma M}]=-h_M+u_{L}-u_{\Sigma L}$, which implies
the desired result.
\end{proof}

Now we are in a position to prove the theorem~\ref{t:Lie algebra
homom}.
\begin{proof}  The relations
$(1)(2)(3)$ follows from the definition of Lie bracket of
Ringel-Hall Lie algebra. It suffices to show $u_{S_i}, u_{\Sigma
S_j}, 0\leq i,j\leq m+n$ satisfy the relations $(4)$ and $(5)$. We
discuss for $i,j$ in $4$ cases.
\begin{itemize}
\item[Case 1:] $i,j\in \{0, 1\}$. We consider the quotient algebra
$B=A/A(e_{2}+e_3+\cdots +e_{n+m})A$, where $e_i$ is the idempotent
associated to the vertex $i$. Note that $B$ is projective as right
$A$-module. Then the derived functor $F=-\lten_BB_A: \cd^b(\mod
B)\to \cd^b(\mod A)$ is an embedding by theorem ~3.1 in
~\cite{Cline-Parshall-Scott}. Now, lemma~\ref{l:fully faithful}
implies the induced functor $\ol{F}:\cR_B\to \cR_A$ is also fully
faithful. In particular, we have a injective Lie algebra
homomorphism $\cl\mg(\cR_B)\to \cl\mg(\cR_A)$. Moreover, we can
identify the simple $B$-modules with simple $A$-modules via the
functor $F$. Thus, to check the a relations  for $\cl\mg(\cR_A)$
involve $0\leq i,j\leq 1$, it suffices to check it in
$\cl\mg(\cR_B)$. Note that the algebra $B$ is hereditary of type $\wt{A_1}$, we
infer that  $\cl\mg(\cR_B)\otimes_{\Z}\C$ is isomorphic to the affine Kac-Moody algebra of type $\wt{A_1}$ by the main theorem of ~\cite{Peng-Xiao2000}, which implies relations $(4)$ and $(5)$ hold.
\item[Case 2:] $i,j\in \{1, 2, \cdots, n+m\}$. Let $B=A/Ae_0A$. It
is easy to see that $\Ext_A(B_A,B_A)=0$. Again by
theorem~3.1~\cite{Cline-Parshall-Scott}, we have $F=-\lten_BB_A:
\cd^b(\mod B)\to \cd^b(\mod A)$ is an embedding. Note that in this
case $B$ is hereditary of Dynkin type $A_{m+n}$. Thus, the
Ringel-Hall algebra $\cl\mg(\cR_B)\otimes_{\Z}\C$ is isomorphic to
simple Lie algebra of type $A_{m+n}$. Now the result follows from
the proof of case $1$.
\item[Case 3:] $i=0, j\neq 1,2,n+1$. In particular, by the definition of Lie bracket we only need to show that $[u_{S_0},u_{S_j}]=0$ and $[u_{S_0}, u_{\Sigma S_j}]=0$. This follows from the fact that  $S_j$ has projective dimension $2$ and the projective resolution of $S_j$ does not involve $P_0$.
\item[Case 4:] $i, j\in \{0,2, n+1\}$. For the case $i=0, j=2$,  we consider the quotient algebra $B=A/A(e_3+\cdots+e_{m+n})A$, which turns out to be a tilted algebra of tame hereditary algebra of type $\wt{A_2}$. Thus the integral Ringel-Hall algebra $\cl\mg(\cR_B)\otimes_{\Z}\C$ is isomorphic to the Kac-Moody algebra of type $\wt{A_2}$. Now the result follows from the proof of case $1$, since we still have full embeddings $F:\der^b(\mod B)\to \der^b(\mod A)$ and $\ol{F}:\cR_B\to \cR_A$. For the case $i=0,j=n+1$, one considers the quotient algebra $B=A/A(e_2+\cdots +e_n+e_{n+2}+\cdots +e_{n+m})A$.
\end{itemize}
Thus $\phi$ is indeed a Lie algebra homomorphism. It is obviously
surjective and keeps the gradation. Clearly, $h_i=(h_{S_i^E})_{E\in
\Omega}$ is linearly independent in
$(\cl\mg(\cR_A)\otimes_{\Z}\C)_0$. By lemma~\ref{l:dimension of 0th
gradation}, we infer that $u_{M}-u_{\Sigma M}\in
(\cl\mg(\cR_A)\otimes_{\Z}\C)_0$, which is linearly independent to $h_0,h_1, \cdots, h_{m+n}$.
Thus, $\dim_{\C} (\cl\mg(\cR_A)\otimes_{\Z}\C)_0\geq m+n+2$.
This completes the proof.
\end{proof}

\begin{remark}
Firstly,
theorem~\ref{t:Lie algebra homom} essentially  give a negative answer to Slodowy's question. If the equality holds for $\mg=GIM(C)$, {\it i.e.} $\dim_{\C}\mg_0=m+n+2$, then for the derived subalgebra $\mg'=[\mg,\mg]$, we must have $\dim_{\C}\mg'_0=m+n+1$. In fact, following the proof of lemma~\ref{l:dimension of 0th gradation}, one can even show that $\dim_{\C} (\cl\mg(\cR_A)\otimes_{\Z}\C)_0\geq (m+1)n+1$.  Secondly, by lemma~\ref{l:dimension of 0th gradation}, we know that $(\dimv M,\dimv M)=4$ and $u_M\in \cl\mg(\cR_A)$. In particular, this also shows that the GIM Lie algebra $\mg$ has root with length greater than $2$. Thirdly, one can easily see that the root basis of $GIM(C)$ is braid equivalent to a root basis of affine Kac-Moody algebra of type $\wt{A}_{m+n}$. By theorem~\ref{t:Lie algebra homom}, we know that $GIM(C)$ is never isomorphic to an affine Kac-Moody algebra of type $\wt{A}_{m+n}$, this also show that the GIM Lie algebras are not invariant under braid equivalent in general.
\end{remark}

\begin{appendix}
\section{Recollement lives in root categories}
In the appendix, we show that a recollement of bounded derived
categories lives in the corresponding root categories under suitable
assumption. This can be use to construct various algebras
inductively such that the $2$-periodic orbit category is not
triangulated with the inherited triangle structure from the
bounded derived category.
\subsection{Derived category of dg category}
Let $\ca$ be a small differential graded (dg) $k$-category. We
identify a dg $k$-algebra with a dg category with one object. Let
$\Dif \ca$ be the dg category of right dg $\ca$-modules. A dg
$\ca$-module $P$ is called {\it $\ck$-projective} if $\Dif \ca(P,?)$
preserves acyclicity. For any dg category $\cb$, let
$\mathcal{Z}^0(\cb)$ be the category with the same objects of $\ca$
whose $\Hom$-space is given by
\[\mathcal{Z}^0(\cb)(X,Y)=Z^0(\cb(X,Y)),
\]
{\it i.e.} the 0th cocycle of dg $k$-module $\cb(X,Y)$. Let
$\ch^0(\cb)$ be the category with the same objects of $\cb$ whose
$\Hom$-space is given by
\[\ch^0(\cb)(X,Y)=H^0(\cb(X,Y)),
\]
{\it i.e.} the 0th homology of dg $k$-module $\cb(X,Y)$. For the dg
category $\Dif \ca$, we define $\cc(\ca):=\mathcal{Z}^0(\Dif \ca)$
and $\ch(\ca):=\ch^0(\Dif \ca)$. A morphism $L\to N$ in $\cc(\ca)$
is called quasi-isomorphism if it induces an isomorphism in
homology. Let $\der(\ca)$ be the derived category of $\ca$, {\it
i.e.} the localization of $\cc(\ca)$ with respect to the class of
quasi-isomorphism. A dg $\ca$-module $L$ is called {\it compact} if
$\der(\ca)(L,?)$ commutes with arbitrary direct sums. For instance,
the projective $\ca$-module $\ca(?,A), A\in \ca$ is both
$\ck$-projective and compact. Let $\per (\ca)$ be the perfect
derived category of $\ca$, {\it i.e.} the smallest subcategory of
$\der\ca$ contains $\ca$ and stable under shift,extensions and
passage to direct factors. For any subcategory $\cm\subseteq
\der(\ca)$, let $\tria(\cm)$ be the thick subcategory of $\der(\ca)$
contains $\cm$.

Let $X$ be a dg $\cb\otimes_k\ca\op$-module. It gives rise to a pair
of adjoint dg functors
\[\xymatrix{\Dif \ca\ar@<1ex>[r]^{T_X}&\Dif \cb.\ar@<1ex>[l]^{H_X}}
\]
Assume $X$ is $\ck$-projective  as $\cb\otimes_k\ca\op$-module, then
$(T_X,H_X)$ induces an adjoint pair triangle functors
$(\lt_X,\rh_X)$ over the derived categories, where $\lt_X$ is the
left derived functor of $T_X$. If both $\ca$ and $\cb$ are dg
$k$-algebras, we also write $?\lten_{\ca}X_{\cb}$ for $\lt_X$.

\subsection{The universal property of root category}
Let $A$ and $B$ be  finite-dimensional $k$-algebras with finite
global dimension. Let $F: \cd^b(\mod A)\to \cd^b(\mod B)$ be a
standard functor, {\it i.e.} $F\cong ?\lten_AX_B$ for some complex
of $A^{op}\otimes_k B$-module. Since for any triangle functor
$L:\der^b(\mod A)\to \der^b(\mod B)$, we have $L\circ
\Sigma^2_A\cong \Sigma^2_B\circ L$. By the universal property of dg
orbit category ({\it cf.} section $9.4$ in ~\cite{Keller2005}), $F$
naturally induces a triangle functor $\ol{F}:\cR_A\to \cR_B$ and we
have the following commutative diagram
\[\xymatrix{\der^b(\mod A)\ar[d]^{\pi_A}\ar[r]^F&\der^b(\mod B)\ar[d]^{\pi_B}\\
\cR_A\ar[r]^{\ol{F}}&\cR_B}
\]
where $\pi_A, \pi_B$ are the canonical functors. In the following,
we will study the induced functor $\ol{F}$ explicitly.

 We may
assume $_AX_B$ is $\ck$-projective as $A^{op}\otimes_kB$-module.
Clearly, $X$ has finite total homology. Moreover, $_AX_B$ is compact
as left $A$-module and right $B$-module respectively due to the fact
$A$ and $B$ have finite global dimension. Then we have the canonical
isomorphism $\RHom_{B}(_AX_B,?)\cong ?\lten_B \RHom_B(_AX_B,B)_A$.
Let $_BY_A\to _B\RHom_B(_AX_B,B)_A$ be a $\ck$-projective resolution
of $_B\RHom_B(_AX_B,B)_A$ as $B^{op}\otimes_k A$ -module. Thus, the
right adjoint $G$ of $F$ naturally isomorphic to $?\lten_B Y_A$.

 Let $\ca$ and $\cb$ be the dg category of bounded complexes of finitely
 generated projective $A$-modules and $B$-modules respectively. The tensor product by $X$ and $Y$ define
  dg functors $?\lten_A X:\ca\to \cb$ and $?\lten_BY:\cb\to
  \ca$. By abuse of notation, we denote these dg functors by $F$ and $G$ as
  well. Similarly, one can lift the square of the shift functors $\Sigma_A^2:\der^b(\mod A)\to \der^b(\mod
  A)$ and $\Sigma_B^2:\der^b(\mod B)\to \der^b(\mod B)$ to  dg
  functors $\Sigma_A^2:\ca\to \ca$ and $\Sigma_B^2:\cb\to\cb$.

  Let
  $\cR_{\ca}$ be the dg orbit category ({\it cf.} section $5$ of \cite{Keller2005}) of $\ca$ respects to $\Sigma_A^2$.
  Let $\cR_{\cb}$ be the dg orbit category of $\cb$ respects to
  $\Sigma_B^2$. We have canonical dg functors $\pi_{\ca}: \ca\to
  \cR_{\ca}$ and $\pi_{\cb}:\cb\to \cR_{\cb}$. We have natural
  isomorphisms $\Sigma_B^2\circ F\cong F\circ \Sigma_A^2$ and $\Sigma_A^2\circ G\cong G\circ \Sigma_B^2$ of dg
  functors. Thus, by the universal property of dg orbit categories,
  $F$ and $G$ induce  dg functors $\ol{F}:\cR_{\ca}\to \cR_{\cb}$
  and $\ol{G}:\cR_{\cb}\to \cR_{\ca}$. Clearly, $\ol{F}$ yields a
  $\cR_{\cb}\otimes_k\cR_{\ca}^{op}$-bimodule $X_{\ol{F}}$
  \[X_{\ol{F}}(B,A)\mapsto \cR_{\cb}(B,\ol{F}(A)).
  \]
  Similarly, $\ol{G}$ induces an
  $\cR_{\ca}\otimes_k\cR_{\cb}^{op}$-bimodule $Y_{\ol{G}}$
  \[Y_{\ol{G}}(A,B)\mapsto \cR_{\ca}(A, \ol{G}(B)).
  \]
Let $\lt_{X_{\ol{F}}}:\der(\cR_{\ca})\to \der(\cR_{\cb})$ be the
derived tensor functor of $X_{\ol{F}}$. Let
$\lt_{Y_{\ol{G}}}:\der(\cR_{\cb})\to \der(\cR_{\ca})$ be the derived
tensor functor of $Y_{\ol{G}}$. In the following, we identify the
objects of $\ca$ with $\cR_{\ca}$ and the objects of $\cb$ with
$\cR_{\cb}$ respectively.

  \begin{lemma}\label{l:adjoint}
  $\lt_{X_{\ol{F}}}$ is left adjoint to $\lt_{Y_{\ol{G}}}$.
  \end{lemma}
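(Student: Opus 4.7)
The plan is to produce a natural isomorphism $\rh_{X_{\ol{F}}} \cong \lt_{Y_{\ol{G}}}$ between the right adjoint of $\lt_{X_{\ol{F}}}$ and the functor $\lt_{Y_{\ol{G}}}$. Both of these functors preserve arbitrary coproducts: for $\lt_{Y_{\ol{G}}}$ this is automatic, while for $\rh_{X_{\ol{F}}}$ it holds because $X_{\ol{F}}(-, A) = \cR_{\cb}(-, \ol{F} A)$ is representable (hence compact) for each $A \in \cR_{\ca}$, so Yoneda gives $\rh_{X_{\ol{F}}}(N)(A) \simeq N(\ol{F} A)$, and this evaluation trivially commutes with direct sums. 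Since $\der(\cR_{\cb})$ is compactly generated by the representables $\{\cR_{\cb}(-, B) : B \in \cR_{\cb}\}$ and both functors are triangle functors preserving coproducts, it suffices to construct the natural isomorphism on these representable generators.

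Applied to $N = \cR_{\cb}(-, B)$, Yoneda on both sides reduces the problem to exhibiting a natural quasi-isomorphism of $\Hom$-complexes
$$\cR_{\cb}(\ol{F} A, B) \simeq \cR_{\ca}(A, \ol{G} B), \qquad A \in \cR_{\ca},\ B \in \cR_{\cb}.$$
Unfolding the dg orbit category definition, this becomes
$$\bigoplus_{n \in \Z} \cb(F A, \Sigma_B^{2n} B) \simeq \bigoplus_{n \in \Z} \ca(A, \Sigma_A^{2n} G B),$$
whose key input is the dg-level adjunction between $F = {?}\lten_A X_B$ and $G = {?}\lten_B Y_A$ representing the derived adjunction $\lt_X \dashv \lt_Y$ on $\der(\mod A)$ and $\der(\mod B)$. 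This supplies, for any complex $C$, a natural quasi-isomorphism $\cb(F A, C) \simeq \ca(A, G C)$; specialising to $C = \Sigma_B^{2n} B$ and combining with the canonical identification $G \Sigma_B^2 \cong \Sigma_A^2 G$, one obtains the required termwise quasi-isomorphisms, which sum over $n \in \Z$ to give the desired identification.

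The main obstacle is coherence: the termwise identifications must assemble into a genuine natural transformation of $\cR_{\ca} \otimes_k \cR_{\cb}^{op}$-bimodules so that the induced unit and counit witness an honest adjunction $\ol{F} \dashv \ol{G}$ of dg functors, from which the adjunction $\lt_{X_{\ol{F}}} \dashv \lt_{Y_{\ol{G}}}$ passes to the derived level by standard bimodule yoga. This coherence is ensured by the universal property of the dg orbit category (Section~5 of \cite{Keller2005}): the dg-level adjunction data of $(F, G)$ intertwines $\Sigma_A^2$ and $\Sigma_B^2$ up to the canonical natural quasi-isomorphisms built into the construction, so the unit and counit factor through $\pi_{\ca}$ and $\pi_{\cb}$ to yield a coherent dg-level adjunction between $\ol{F}$ and $\ol{G}$, which then descends to the sought-after derived adjunction.
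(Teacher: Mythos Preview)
Your proposal is correct and follows essentially the same strategy as the paper: both arguments reduce to showing $\rh_{X_{\ol{F}}}\cong \lt_{Y_{\ol{G}}}$, both exploit that $X_{\ol{F}}(?,A)\cong \cR_{\cb}(?,\ol{F}A)$ is representable (hence compact), and both ultimately rest on the same chain of quasi-isomorphisms coming from tensor--Hom adjunction together with $\RHom_B({}_AX_B,-)\simeq -\lten_B Y_A$, unfolded termwise along the $\Sigma^{2n}$-summands of the orbit category.

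The organizational difference is in how coherence is packaged. The paper invokes Lemma~6.2(a) of \cite{Keller1994} to replace $\rh_{X_{\ol{F}}}$ by $\lt_{X_{\ol{F}}^T}$, so that the whole problem becomes exhibiting a single quasi-isomorphism of $\cR_{\ca}\otimes_k\cR_{\cb}^{op}$-bimodules $Y_{\ol{G}}\to X_{\ol{F}}^T$; naturality in both variables is then manifest from the explicit chain of maps. Your route instead checks coproduct-preservation and evaluates on representable generators, deferring coherence to the universal property of the dg orbit category. This is fine, but note that your final paragraph is doing the real work and is somewhat compressed: to make it rigorous you would in effect be reconstructing the bimodule map the paper writes down. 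The paper's packaging via $X_{\ol{F}}^T$ buys you that naturality for free, whereas your version requires spelling out why the termwise adjunction quasi-isomorphisms are compatible with the transition maps defining morphisms in $\cR_{\ca}$ and $\cR_{\cb}$.
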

  \begin{proof}
  Clearly, $X_{\ol{F}}^A$ is $\ck$-projective for any $A\in \ca$ and $\lt_{X_{\ol{F}}}$ is left adjoint to $\rh_{X_{\ol{F}}}$. It suffices to
  show that $\lt_{Y_{\ol{G}}}\cong \rh_{X_{\ol{F}}}$. For any $\wt{A}\in
  \ca$, $X_{\ol{F}}(?,\wt{A})\cong \cR_{\cb}(?,\ol{F}(\wt{A}))$ which is
  compact in $\der(\cR_{\cb})$. By Lemma $6.2$ (a) in ~\cite{Keller1994}, we have
  $\lt_{X_{\ol{F}}^T}\cong \rh_{X_{\ol{F}}}$, where $X_{\ol{F}}^T$ is
  defined by
  \[X_{\ol{F}}^T(\wt{A},\wt{B})=\Dif \cR_{\cb}(X_{\ol{F}}(?,\wt{A}), \wt{B}^{\wedge})
  \]
  Thus, it suffices to show that we have a quasi-isomorphism $Y_{\ol{G}}\to
  X_{\ol{F}}^T$ as $\cR_{\ca}\otimes_k\cR_{\cb}^{op}$-bimodule.
 For any $\wt{A}\in \ca$ and $\wt{B}\in \cb$, we have
  \begin{eqnarray*}
  X_{\ol{F}}^T(\wt{A}, \wt{B})&=&\Dif \cR_{\cb}(X_{\ol{F}}(?,{\wt{A}}),\wt{B}^{\wedge})\\
  &=&
  \Dif \cR_{\cb}({\ol{F}}(\wt{A})^{\wedge}, \wt{B}^{\wedge})\\
  &\cong&
  \cR_{\cb}(\ol{F}(\wt{A}), \wt{B})\\
  &\cong&\bigoplus_{n\in \Z}\cb(F(\wt{A}), \Sigma_B^{2n}\wt{B})\\
  &\cong&\bigoplus_{n\in \Z} \RHom_{B}(\wt{A}\otimes_AX_B,
  \Sigma_B^{2n}\wt{B})\\
  &\cong&\bigoplus_{n\in \Z} \RHom_A(\wt{A},
  \RHom_B(X,\Sigma_B^{2n}\wt{B}))
  \end{eqnarray*}
Recall that we have quasi-isomorphism $\Sigma_B^{2n}\wt{B}\lten_B
  \RHom_B(X,B)\to \RHom_B(_AX_B,\wt{B})$ and $\wt{A}$ is
  $\ck$-projective as right $A$-module. In particular, we have a
  quasi-ismorphism
  \[\bigoplus_{n\in \Z}\RHom_A(\wt{A},\Sigma_B^{2n}\wt{B}\lten_B
  \RHom_B(X,B))\xrightarrow{q.is} \bigoplus_{n\in
  \Z}\RHom_A(\wt{A},\RHom_B(_AX_B,\wt{B})).
  \]
Again, we also have quasi-isomorphism
$\Sigma_B^{2n}\wt{B}\otimes_BY\to
\Sigma_B^{2n}\wt{B}\lten_B\RHom_B(X,B)$, which implies
\[\bigoplus_{n\in \Z} \RHom_{A}(\wt{A}, \Sigma_B^{2n}\wt{B}\otimes_B
  Y)\xrightarrow{q.is}\bigoplus_{n\in \Z}\RHom_A(\wt{A},\Sigma_B^{2n}\wt{B}\lten_B
  \RHom_B(X,B)).
\]
The first term
\begin{eqnarray*}
\bigoplus_{n\in \Z} \RHom_{A}(\wt{A}, \Sigma_B^{2n}\wt{B}\otimes_B
  Y)&\cong& \bigoplus_{n\in \Z} \RHom_A(\wt{A},
  \Sigma_A^{2n}(\wt{B}\otimes_BY))\\
  &\cong& \bigoplus_{n\in \Z}\ca(\wt{A},\Sigma_A^{2n}G(\wt{B}))\\
  &\cong &\cR_{\ca}(\wt{A}, \ol{G}(\wt{B}))\\
  &=&Y_{\ol{G}}(\wt{A}, \wt{B})
\end{eqnarray*}
Thus, we have obtained a quasi-isomorphism $Y_{\ol{G}}(\wt{A},
\wt{B})\to X_{\ol{F}}^T(\wt{A}, \wt{B})$, which is natural in both
$\wt{A}$ and $\wt{B}$. This completes the proof.
\end{proof}
The following lemma is quite obviously.
\begin{lemma}\label{l:fully faithful}
If $F:\der^b(\mod A)\to \der^b(\mod B)$ is fully faithful, then
$\lt_{X_{\ol{F}}}: \der(\cR_{\ca})\to \der(\cR_{\cb})$ is fully
faithful.
\end{lemma}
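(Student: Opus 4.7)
The plan is to show that $\lt_{X_{\ol{F}}}$ is fully faithful by first verifying the claim on the representable functors (which form a set of compact generators) and then extending by a standard devissage using the existence of a right adjoint from Lemma~\ref{l:adjoint}.

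First I would observe that by construction of the bimodule $X_{\ol{F}}$, we have
\[
\lt_{X_{\ol{F}}}(\wt{A}^{\wedge})\;\cong\;\ol{F}(\wt{A})^{\wedge}
\]
for each $\wt{A}\in\ca\subset\cR_{\ca}$: indeed, $\wt{A}^{\wedge}\lten_{\cR_{\ca}}X_{\ol{F}}$ evaluated at $\wt{B}\in\cb$ is $X_{\ol{F}}(\wt{B},\wt{A})=\cR_{\cb}(\wt{B},\ol{F}\wt{A})=\ol{F}(\wt{A})^{\wedge}(\wt{B})$. Thus, to prove fully faithfulness on representables, I would compute on both sides
\[
\Hom_{\der(\cR_{\ca})}(\wt{A}^{\wedge},\Sigma^n\wt{A}'^{\wedge})
=H^n\cR_{\ca}(\wt{A},\wt{A}')
=H^n\Bigl(\bigoplus_{m\in\Z}\ca(\wt{A},\Sigma_A^{2m}\wt{A}')\Bigr),
\]
\[
\Hom_{\der(\cR_{\cb})}(\ol{F}\wt{A}^{\wedge},\Sigma^n\ol{F}\wt{A}'^{\wedge})
=H^n\Bigl(\bigoplus_{m\in\Z}\cb(F\wt{A},\Sigma_B^{2m}F\wt{A}')\Bigr).
\]
Since the objects of $\ca$ are bounded complexes of projectives, the dg Hom complexes compute $\RHom$; the hypothesis that $F:\der^b(\mod A)\to\der^b(\mod B)$ is fully faithful therefore translates into quasi-isomorphisms $\ca(\wt{A},\Sigma_A^{2m}\wt{A}')\to\cb(F\wt{A},\Sigma_B^{2m}F\wt{A}')$ for every $m$. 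Passing to the direct sum and taking cohomology gives the desired isomorphism on representables.

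To propagate fully faithfulness to the whole of $\der(\cR_{\ca})$, I would use the standard adjoint/devissage argument: by Lemma~\ref{l:adjoint}, $\lt_{X_{\ol{F}}}$ admits a right adjoint, so it preserves arbitrary coproducts, shifts and triangles. Let $\mathcal{F}\subset\der(\cR_{\ca})$ be the full subcategory of objects $L$ for which the unit $L\to\lt_{Y_{\ol{G}}}\lt_{X_{\ol{F}}}L$ is an isomorphism; equivalently, for which
\[
\Hom_{\der(\cR_{\ca})}(L,M)\to\Hom_{\der(\cR_{\cb})}(\lt_{X_{\ol{F}}}L,\lt_{X_{\ol{F}}}M)
\]
is bijective for all $M$ in a second subcategory $\mathcal{G}$ with the analogous stability. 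Both $\mathcal{F}$ and $\mathcal{G}$ are triangulated and stable under coproducts. The previous paragraph shows that the representables $\wt{A}^{\wedge}$, $\wt{A}\in\ca$, lie in $\mathcal{F}\cap\mathcal{G}$, and since they form a set of compact generators of $\der(\cR_{\ca})$, we conclude $\mathcal{F}=\mathcal{G}=\der(\cR_{\ca})$, proving that $\lt_{X_{\ol{F}}}$ is fully faithful.

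I do not anticipate a serious obstacle: the only subtle point is matching the dg Hom complex $\ca(\wt{A},\wt{A}')$ with $\RHom_A(\wt{A},\wt{A}')$, which is automatic because $\ca$ consists of $\ck$-projective complexes. Everything else is a direct unwinding of the definition of the dg orbit category $\cR_{\ca}$ combined with the adjointness established in Lemma~\ref{l:adjoint}.
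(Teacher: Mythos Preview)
Your proof is correct and is essentially the same approach as the paper's: the paper simply cites Lemma~4.2~(a) and~(b) of \cite{Keller1994}, whose content is precisely the argument you have written out---check that the induced map on Hom's is an isomorphism between representable (compact generating) objects, then propagate by a d\'evissage using the right adjoint from Lemma~\ref{l:adjoint}. Your presentation of the two-variable d\'evissage is slightly informal (the ``equivalently'' between the unit condition and the Hom-bijectivity condition needs a moment's care), but the substance is exactly right.
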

\begin{proof}
It follows from the Lemma $4.2$ (a) and (b) of ~\cite{Keller1994}
directly.
\end{proof}

Let $\cR_A$ be the perfect derived category of $\cR_{\ca}$. Let
$\cR_B$ be the perfect derived category of $\cR_{\cb}$. In other
word, $\cR_A$  and $\cR_B$ are the root categories of $A$ and $B$
respectively. Clearly, the triangle functors $\lt_{X_{\ol{F}}}$ and
$\lt_{Y_{\ol{G}}}$ restrict to an adjoint pair of triangle functors
\[\xymatrix{\cR_A\ar@<1ex>[r]^{\lt_{X_{\ol{F}}}}&\cR_B.\ar@<1ex>[l]^{\lt_{Y_{\ol{G}}}}}
\]
For simplicity, we still denote $\ol{F}:=\lt_{X_{\ol{F}}}:\cR_A\to
\cR_B$ and $\ol{G}:=\lt_{Y_{\ol{G}}}:\cR_B\to \cR_A$.

\subsection{Recollement lives in root categories} Suppose we are
given triangulated categories $\der', \der, \der''$ with triangle
functors
\[\xymatrix@C=1.5cm{\der'\ar[r]^-{i_*=i_!} &
\der\ar@<3ex>[l]^{i^!} \ar@<-3ex>[l]_-{i^* } \ar[r]^-{j^* =j^!}&
\der''. \ar@<3ex>[l]^-{j_*} \ar@<-3ex>[l]_-{j_!}}
\]
such that
\begin{itemize}
\item[$\circ$] $(i^*,i_*,i^!)$ and $(j_!,j^*,j_*)$ are adjoint
triples;
\item[$\circ$] $i_*, j_!,j_*$ are fully faithful;
\item[$\circ$] $j^*\circ i_*=0$;
\item[$\circ$] any $X$ in $\der$, there are distinguished
triangles
\[i_!i^!X\to X\to j_*j^*X\to \Sigma i_!i^!X\to X, j_!j^!X\to X\to
i_*i^*X\to \Sigma j_!j^!X
\]
where the morphisms $i_!i^!X\to , X\to j_*j^*X$, etc. are adjunction
morphisms.
\end{itemize}
Then we say that $\der$ admits {\it recollement} relative to $\der'$
and $\der''$. This notation was first introduced by
Beilinson-Bersstein-Deligne~\cite{Beilinson-Bersstein-Deligne1982}
in geometric setting with the idea that $ \der$ can be viewed as
bing glued together from $\der'$ and $\der''$. It is not hard to
show that if both $\der'$ and $\der''$ are Krull-Schmidt categories,
 so is $\der$. Recollement in algebraic setting was studied
 extensively due to the close relation with tilting theory~\cite{Jorgensen}\cite{Koenig1991}, etc.

Let $A,B,C$ are finite-dimensional $k$-algebras with finite global
dimension. Suppose that the bounded derived category $\der^b(\mod
B)$ admits a recollement relative to $\der^b(\mod A)$ and
$\der^b(\mod C)$. In particular, we have the following diagram of
triangulated categories and triangle functors
\[\xymatrix@C=1.2cm{\cd^b(\mod A)\ar[r]^-{i_*=i_!} &
\cd^b(\mod B)\ar@<3ex>[l]^-{i^!} \ar@<-3ex>[l]_-{i^* } \ar[r]^-{j^*
=j^!}& \cd^b(\mod C). \ar@<3ex>[l]^-{j_*} \ar@<-3ex>[l]_-{j_!}}
\]
Assume further that both the functors $i^*$ and $j_!$ are standard.
Then we have the following
\begin{theorem}\label{t:recollement}
Keep the notations above. Let $A$, $B$ and $C$ be finite-dimensional
$k$-algebras with finite global dimension such that the derived
category $\der^b(\mod B)$ admits a recollement relative to
$\der^b(\mod A)$ and $\der^b(\mod C)$. Assume that the functor $i^*$
and $j_!$ are standard. The root category $\cR_B$ admits a
recollement relative to $\cR_A$ and $\cR_C$. Moreover, we have the
following commutative diagram of recollements
\[\xymatrix@R=1.2cm@C=1.2cm{\cd^b(\mod A)\ar[d]^{\pi_A}\ar[r]^{i_*=i_!} &
\cd^b(\mod B)\ar[d]^{\pi_B}\ar@<3ex>[l]^-{i^!} \ar@<-3ex>[l]_-{i^* }
\ar[r]^-{j^* =j^!}& \cd^b(\mod C). \ar[d]^{\pi_C}\ar@<3ex>[l]^-{j_*}
\ar@<-3ex>[l]_-{j_!}\\
 \cR_A\ar[r]^-{\ol{i_*}=\ol{i_!}} & \cR_B\ar@<3ex>[l]^-{\ol{i^!}}
\ar@<-3ex>[l]_-{\ol{i^*} } \ar[r]^-{\ol{j^*} =\ol{j^!}}& \cR_C
\ar@<3ex>[l]^-{\ol{j_*}} \ar@<-3ex>[l]_-{\ol{j_!}}.}
\]
\end{theorem}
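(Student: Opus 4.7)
The plan is to lift each ingredient of the recollement on $\der^b(\mod B)$ to $\cR_B$ using the lemmas of the preceding subsection as black boxes. First, all six recollement functors are standard: by hypothesis so are $i^*$ and $j_!$, and each of the remaining four ($i_*, i^!, j^*, j_*$) is a right adjoint (iteratively) of one of these; as explained in the discussion preceding Lemma~\ref{l:adjoint}, for finite-dimensional $k$-algebras of finite global dimension the right adjoint of a standard functor $?\lten_A X_B$ is again standard, represented by $?\lten_B Y_A$ for $Y$ a $\ck$-projective resolution of $\RHom_B(X,B)$. Applying the dg-functorial construction of subsection~\ref{keller's construction} to each of the six standard functors gives triangle functors $\ol{i^*}, \ol{i_*}, \ol{i^!}, \ol{j_!}, \ol{j^*}, \ol{j_*}$, and the two commutative squares with $\pi_A, \pi_B, \pi_C$ are automatic.

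I would then check the recollement axioms one by one. The three adjoint pairs descend to $(\ol{i^*},\ol{i_*})$, $(\ol{i_*},\ol{i^!})$, $(\ol{j_!},\ol{j^*})$, $(\ol{j^*},\ol{j_*})$ directly from Lemma~\ref{l:adjoint}, and the fully faithfulness of $\ol{i_*}, \ol{j_!}, \ol{j_*}$ follows directly from Lemma~\ref{l:fully faithful}. The vanishing $\ol{j^*}\circ\ol{i_*}=0$ holds on $\pi_A(\der^b(\mod A))$ by the commutativity of the diagram together with $j^*\circ i_*=0$; since $\cR_A=\tria(\pi_A(\der^b(\mod A)))$ and $\ol{j^*}\circ\ol{i_*}$ is a triangle functor, the vanishing propagates to all of $\cR_A$. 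The dual vanishings $\ol{i^*}\ol{j_!}=0$ and $\ol{i^!}\ol{j_*}=0$ follow by adjunction.

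Only the two gluing triangles remain, and I would handle them in two stages. For $X=\pi_B(M)$ with $M\in\der^b(\mod B)$, I apply $\pi_B$ to the two gluing triangles of $M$; the commutativity of the diagram identifies the outcome with the asserted triangles for $\pi_B(M)$, the connecting maps being the images of the derived unit/counit, which by construction of $\ol{-}$ coincide with the units/counits of the descended adjunctions. To extend to arbitrary $X\in \cR_B$, let $\cX\subseteq \cR_B$ be the full subcategory of objects admitting both gluing triangles. Then $\cX\supseteq \pi_B(\der^b(\mod B))$ by stage one, $\cX$ is visibly shift-closed, and using the naturality of units and counits together with the octahedral axiom it is closed under triangles. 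Hence $\cX=\tria(\pi_B(\der^b(\mod B)))=\cR_B$.

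The main obstacle is the triangle-closure step just described. Concretely, given a triangle $X_1\to X_2\to X_3\to \Sigma X_1$ with $X_1, X_3\in \cX$, I would complete the counit $\ol{j_!}\ol{j^*}X_2\to X_2$ to a triangle with third term $Y_2$, factor the unit $X_2\to \ol{i_*}\ol{i^*}X_2$ through $Y_2$ via $\ol{j^*}\ol{i_*}=0$, and then fit the comparison $Y_2\to \ol{i_*}\ol{i^*}X_2$ into a morphism of triangles together with the already-known comparisons for $X_1$ and $X_3$; the triangulated five lemma upgrades it to an isomorphism. In principle this is a routine diagram chase, but it contains essentially all the content of the proof beyond the three input lemmas, and it is where one must carefully set up the octahedra relating the counit triangles for $X_1, X_2, X_3$ so that the comparison maps are compatible.
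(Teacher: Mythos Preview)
Your proposal is correct and follows essentially the same route as the paper: reduce all six functors to standard ones, invoke Lemma~\ref{l:adjoint} and Lemma~\ref{l:fully faithful} for the adjunctions and fully-faithfulness, check $\ol{j^*}\ol{i_*}=0$ on generators, and then obtain the gluing triangles first on $\im\pi_B$ by pushing down the derived-level triangles and then on all of $\cR_B$ by d\'evissage. The only noticeable difference is in the extension step: the paper takes a triangle $X\to Y\to Z\to\Sigma X$ with $X,Y\in\im\pi_B$, applies the nine lemma to the square formed by the counits $\eta_X,\eta_Y$, and then identifies the resulting third row with the adjunction triangle for $Z$ by showing that the induced maps correspond under adjunction to isomorphisms; your version instead phrases it as triangle-closure of the class $\cX$ and argues via a factorisation through the cone of the counit followed by the five lemma. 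Both are routine diagram chases of the same flavour, and your packaging via the subcategory $\cX$ is arguably cleaner than the paper's bare ``by d\'evissage''.
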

\begin{proof}
Since $i^*$ and $j_!$ are standard, then all the functors
$i_*,i^!,j^*,j_*$ are standard due to fact $A,B,C$ have finite
global dimension. Thus, we have the corresponding induced functors
$\ol{i^*}, \ol{i_*},\ol{i^!},\ol{j_!},\ol{j^*},\ol{j_*}$. The
commutativity of the above diagram follows from the universal
property of the root categories. It suffices to show that $\cR_B$
admits a recollement relative to $\cR_A$ and $\cR_C$ together with
the functors $\ol{i^*},
\ol{i_*},\ol{i^!},\ol{j_!},\ol{j^*},\ol{j_*}$. Clearly, $(\ol{i^*},
\ol{i_*},\ol{i^!})$ and $(\ol{j_!},\ol{j^*},\ol{j_*})$ are adjoint
triples follows Lemma ~\ref{l:adjoint}. By Lemma ~\ref{l:fully
faithful}, one infers that $\ol{i_*}, \ol{j_!},\ol{j_*}$ are fully
faithful. Since $\cR_A$ is generated by $\pi_A(A)$, to show that
$\ol{j^*}\circ \ol{i_*}=0$, it suffices to show $\ol{j^*}\circ
\ol{i_*}(\pi_A(A))=0$. By the commutativity of the above diagram,
this result follows from $j^*\circ i_*=0$. It remains to show that
for any $X\in \cR_B$ there are  triangles
\[\ol{i_!}\ol{i^!}X\to X\to \ol{j_*}\ol{j^*}X\to \Sigma
\ol{i_!}\ol{i^!}X,\  \ol{j_!}\ol{j^!}X\to X\to \ol{i_*}\ol{i^*}X\to
\Sigma \ol{j_!}\ol{j^!}X.
\]
We prove the existence of the first triangle, the second one is
similar.

If $X\in \im \pi_B$, there is $Y\in \der^b(\mod B)$ such that
$X=\pi_B(Y)$. By the recollement of $\der^b(\mod B)$ relative to
$\der^b(\mod A)$ and $\der^b(\mod C)$, we have
\[i_!i^!Y\to Y\to j_*j^*Y\to \Sigma i_!i^!Y.
\]
Applying the triangle functor $\pi_B$, we get a triangle in $\cR_B$
\[\pi_B(i_!i^!Y)\to \pi_B(Y)\to \pi_B(j_*j^*Y)\to \Sigma
\pi_B(i_!i^!Y).
\]
By the commutativity of the functors, we have
\[\ol{i_!}\ol{i^!}\pi_B(Y)\to \pi_B(Y)\to \ol{j_*}\ol{j^*}\pi_B(Y)\to \Sigma
\ol{i_!}\ol{i^!}\pi_B(Y).
\]
Clearly, this triangle is isomorphic to
\[\ol{i_!}\ol{i^!}\pi_B(Y)\xrightarrow{\eta_X} \pi_B(Y)\xrightarrow{\epsilon_X}\ol{j_*}\ol{j^*}\pi_B(Y)\to \Sigma
\ol{i_!}\ol{i^!}\pi_B(Y)
\]
where $\eta_X, \epsilon_X$ are adjunction morphisms, which implies
the later one is a distinguished triangle.

Consider the triangle $X\xrightarrow{f} Y \to Z\to \Sigma X$, where
$X, Y\in \im \pi_B$. Consider the following commutative square
\[\xymatrix{\ol{i_!}\ol{i^!}X\ar[d]^{\ol{j_*}\ol{j^*}f}\ar[r]^{\eta_X}&X\ar[d]^f\\
\ol{j_*}\ol{j^*}Y\ar[r]^{\eta_Y}&Y}
\]
By nine lemma, one can embed the square to the following commutative
diagram of triangles
\[\xymatrix{\ol{i_!}\ol{i^!}X\ar[d]^{\ol{j_*}\ol{j^*}f}\ar[r]^{\eta_X}&X\ar[d]^f\ar[r]^{\epsilon_X}&\ol{j_*}\ol{j^*}X\ar[d]^{\ol{j_*}\ol{j^*}f}\\
\ol{i_!}\ol{i^!}Y\ar[d]^{\ol{i_!}g_1}\ar[r]^{\eta_Y}&Y\ar[d]^g\ar[r]^{\epsilon_Y}&\ol{j_*}\ol{j^*}Y\ar[d]^{\ol{j_*}g_2}\\
\ol{i_!}U_Z\ar[r]^u&Z\ar[r]^v&\ol{j_*}V_Z}
\]
Let $\phi(u):U_Z\to \ol{i^!}Z $ be the morphism corresponds to $u$
under the natural isomorphism. Let $\phi(v):\ol{j^*}Z\to V_Z$ be the
morphism corresponds to $v$. It is clear that $\phi(u)$ and
$\phi(v)$ are isomorphisms. Thus, one gets the following commutative
diagram
\[\xymatrix{\ol{i_!}U_Z\ar[d]^{\ol{i_!}\phi(u)}\ar[r]^u&Z\ar@{=}[d]\ar[r]^v&\ol{j_*}V_Z\ar[d]^{\ol{j_*}(\phi(v)^{-1})}\ar[r]^w&\Sigma \ol{i_!}U_Z\ar[d]^{\Sigma \ol{i_!}\phi(u)}\\
 \ol{i_!}\ol{i^!}Z\ar[r]^{\eta_Z}&Z\ar[r]^{\epsilon_Z}&\ol{j_*}\ol{j^*}Z\ar[r]^\delta &\Sigma \ol{i_!}\ol{i^!}Z }
\]
where $\delta=\ol{j_*}\phi(v)\circ w\circ \Sigma \ol{i_!}\phi(u)$.
Thus, one informs that
\[\ol{i_!}\ol{i^!}Z\xrightarrow{\eta_Z} Z\xrightarrow{\epsilon_Z} \ol{j_*}\ol{j^*}Z\to \Sigma
\ol{i_!}\ol{i^!}Z
\]
is a distinguished triangle. Now this holds for any $Z\in \cR_B$ by
'devissage'.
\end{proof}

\begin{corollary}\label{c:dense}
Keep the assumptions in theorem~\ref{t:recollement} . If the canonical functor $\pi_B$ is
dense, then both $\pi_A$ and $\pi_C$ are dense.
\end{corollary}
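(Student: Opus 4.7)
The plan is to exploit the commutative diagram of recollements established in Theorem~\ref{t:recollement}, combined with the standard fact that in any recollement the two outer fully faithful functors admit adjoints whose unit/counit on their image is invertible. Concretely, for the left side of the recollement $\ol{i_*} = \ol{i_!}$ is fully faithful, so $\ol{i^!}\circ\ol{i_*}\cong \id_{\cR_A}$ and $\ol{i^*}\circ\ol{i_*}\cong \id_{\cR_A}$; dually, for the right side $\ol{j_*}$ and $\ol{j_!}$ are fully faithful, so $\ol{j^*}\circ\ol{j_*}\cong \id_{\cR_C}$ and $\ol{j^*}\circ\ol{j_!}\cong \id_{\cR_C}$.

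For $\pi_C$: given any $Z\in\cR_C$, consider $\ol{j_*}Z\in\cR_B$. Since $\pi_B$ is dense by hypothesis, there exists $Y\in\der^b(\mod B)$ with $\pi_B(Y)\cong \ol{j_*}Z$. Applying $\ol{j^*}$ and using the commutativity square $\ol{j^*}\circ\pi_B\cong \pi_C\circ j^*$ from Theorem~\ref{t:recollement}, we get
\[Z\cong \ol{j^*}\ol{j_*}Z\cong \ol{j^*}\pi_B(Y)\cong \pi_C(j^*Y),\]
so $Z$ lies in the essential image of $\pi_C$.

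For $\pi_A$: the argument is entirely parallel. Given $X\in\cR_A$, take $\ol{i_*}X\in\cR_B$ and lift it through $\pi_B$ to some $Y\in\der^b(\mod B)$. Then applying $\ol{i^!}$ (or equivalently $\ol{i^*}$) and invoking the corresponding commutativity square $\ol{i^!}\circ\pi_B\cong \pi_A\circ i^!$ gives
\[X\cong \ol{i^!}\ol{i_*}X\cong \ol{i^!}\pi_B(Y)\cong \pi_A(i^!Y).\]

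There is no serious obstacle here: the whole argument is a formal consequence of the commutative diagram of recollements, once one observes that density transfers along a fully faithful functor with a retract. The only point to be careful about is to use the right adjoint on each side ($\ol{i^!}$ and $\ol{j^*}$ respectively) so that the counit/unit applied to an object in the image is an isomorphism; this is exactly the content of the fully faithful assertions in Theorem~\ref{t:recollement}.
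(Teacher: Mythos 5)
Your proof is correct and follows essentially the same route as the paper: lift $\ol{i_*}X$ (resp.\ $\ol{j_*}Z$) through the dense functor $\pi_B$ and recover the original object via the recollement adjunctions and the commutative diagram of Theorem~\ref{t:recollement}. The only difference is cosmetic: the paper extracts $X\cong \pi_A(i^!Y)$ by comparing the image of the canonical triangle $i_!i^!Y\to Y\to j_*j^*Y$ with the canonical triangle of $\ol{i_*}X$, whereas you invoke $\ol{i^!}\ol{i_*}\cong\id$ and $\ol{j^*}\ol{j_*}\cong\id$ directly, which is a mild streamlining of the same argument.
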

\begin{proof}
For any $X\in \cR_A$, consider $\ol{i_*}X\in \cR_B$. By the dense of
$\pi_B$, there is a $Y\in \der^b(\mod B)$ such that $\pi_B(Y)\cong
\ol{i_*}X$. For $Y$, one have the canonical triangle $i_!i^!Y\to
Y\to j_*j^*Y\to$.  Applying the functor $\pi_B$, we have
\[\pi_B(i_!i^!Y)\to \pi_B(Y)\to \pi_B(j_*j^*Y)\to
\]
which have to isomorphic to the canonical triangle
\[\ol{i_!}\ol{i^!}(\ol{i_!}X)\to X\to 0\to.
\]
One gets $X\cong \pi_A(i^! Y)$. In particular, $\pi_A$ is dense.
Similar proof implies that $\pi_C$ is also dense.
\end{proof}
\begin{remark}
If only one of $i^*$ and $j_!$ is standard, say $i^*$ is standard.
Then lemma ~\ref{l:adjoint}  ~\ref{l:fully faithful} and a result of
~\cite{Parshall-Scott1988} imply that there is a  recollement
\[\xymatrix@C=1.5cm{
 \cR_A\ar[r]^-{\ol{i_*}=\ol{i_!}} & \cR_B\ar@<3ex>[l]^-{\ol{i^!}}
\ar@<-3ex>[l]_-{\ol{i^*} } \ar[r]^-{\ol{j^*} =\ol{j^!}}&
\cR_B/\ol{i_*}\cR_A \ar@<3ex>[l]^-{\ol{j_*}}
\ar@<-3ex>[l]_-{\ol{j_!}}.}
\]
The corollary ~\ref{c:dense} also holds in this case (one should
replace the functor $\pi_C$).
\end{remark}
The following is now quite obviously.
\begin{corollary}
 Let $A$ and $B$ be  finite-dimensional
$k$-algebras with finite global dimension. Assume the root category
$\cR_A$ is not triangulated with the inherited triangle structure.
For any finite dimensional $A\otimes_k B\op$-module $M$, the root
category of the triangular extension of $A$ and $B$ by $M$ is not
triangulated with the inherited triangle structure.
\end{corollary}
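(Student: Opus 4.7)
The plan is to deduce the statement directly from Theorem~\ref{t:recollement} together with Corollary~\ref{c:dense}. Write $C$ for the triangular extension $\bigl(\begin{smallmatrix}A & 0\\M & B\end{smallmatrix}\bigr)$ attached to the bimodule ${}_AM_B$. Since $A$ and $B$ have finite global dimension and $M$ is finite dimensional, $C$ again has finite global dimension. Let $e\in C$ be the idempotent projecting onto the $A$-corner. The pair $(e,1-e)$ induces the standard recollement
\[
\der^b(\mod B)\xrightarrow{\ i_*\ } \der^b(\mod C)\xrightarrow{\ j^*\ }\der^b(\mod A),
\]
in which all six constituent functors $i^*,i_*,i^!,j_!,j^*,j_*$ are given by derived tensor products or derived Homs with explicit $C$-bimodules built from $eCe$, $(1-e)C(1-e)$ and $(1-e)Ce$; in particular $i^*$ and $j_!$ are standard, so the hypotheses of Theorem~\ref{t:recollement} hold.

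Applying that theorem produces a recollement
\[
\cR_B\xrightarrow{\ \ol{i_*}\ }\cR_C\xrightarrow{\ \ol{j^*}\ }\cR_A
\]
at the level of root categories, fitting into commutative squares with the canonical projections $\pi_A,\pi_B,\pi_C$. To finish, recall from Section~\ref{keller's construction} that the phrase ``the 2-periodic orbit category of an algebra $R$ is not triangulated with the inherited triangle structure'' is synonymous with the statement that $\pi_R:\der^b(\mod R)\to \cR_R$ fails to be dense. The hypothesis of the corollary is therefore that $\pi_A$ is not dense; but Corollary~\ref{c:dense} asserts that density of $\pi_C$ would force density of $\pi_A$. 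Contrapositively $\pi_C$ is not dense, which is precisely the desired conclusion.

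The only real obstacle I anticipate is the bookkeeping in the first step: being explicit about the recollement attached to the triangular matrix algebra $C$ and verifying that each of its six legs is genuinely a standard functor, i.e.\ a derived tensor product or derived Hom with an honest bimodule. Once this verification is in place, the rest of the argument is a purely formal combination of the two earlier results.
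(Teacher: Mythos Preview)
Your proposal is correct and is precisely the argument the paper has in mind: the corollary is stated immediately after Theorem~\ref{t:recollement} and Corollary~\ref{c:dense} with the remark that it is ``quite obviously'' a consequence, and you have spelled out exactly that deduction. The only cosmetic slip is the matrix form of the triangular extension (with ${}_AM_B$ it should be upper triangular, $\bigl(\begin{smallmatrix}A & M\\0 & B\end{smallmatrix}\bigr)$), but this does not affect the argument.
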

\end{appendix}

\def\cprime{$'$}
\providecommand{\bysame}{\leavevmode\hbox
to3em{\hrulefill}\thinspace}
\providecommand{\MR}{\relax\ifhmode\unskip\space\fi MR }
\providecommand{\MRhref}[2]{%
  \href{http://www.ams.org/mathscinet-getitem?mr=#1}{#2}
} \providecommand{\href}[2]{#2}

\end{document}